\pdfoutput=1
\documentclass[12pt,a4paper]{amsart}
\usepackage[utf8]{inputenc}
\usepackage[T1]{fontenc}
\usepackage[british]{babel}
\usepackage[babel=true,tracking=true]{microtype}
\usepackage[margin=3cm]{geometry}
\usepackage{amsmath,amsthm,amssymb,bookmark,csquotes,nicematrix}
\usepackage[backend=biber,style=alphabetic,giveninits]{biblatex}
\DeclareNameAlias{default}{family-given}
\addbibresource{wGr-weights.bib}
\AtBeginBibliography{\small}

\theoremstyle{plain}
\newtheorem{theorem}{Theorem}[section]
\newtheorem{lemma}[theorem]{Lemma}
\newtheorem{corollary}[theorem]{Corollary}
\newtheorem{proposition}[theorem]{Proposition}

\theoremstyle{definition}
\newtheorem{definition}[theorem]{Definition}
\newtheorem{problem}[theorem]{Problem}
\newtheorem{example}[theorem]{Example}
\newtheorem{question}[theorem]{Question}

\theoremstyle{remark}
\newtheorem{remark}[theorem]{Remark}
\newtheorem{notation}[theorem]{Notation}
\newtheorem{computation}[theorem]{Computation}
\newtheorem*{conventions}{Notations and conventions}
\newtheorem*{acknowledgements}{Acknowledgements}
\newtheorem*{structure}{Structure of the paper}

\title[Weighted Grassmannians and their explicit description]{Weighted Grassmannians and \\ their explicit description}
\author{Mikhail Ovcharenko}
\address
{
  \textnormal{Steklov Mathematical Institute of RAS, 8 Gubkina street, Moscow 119991, Russia.}
  \newline
  \textnormal{HSE University, Laboratory of Mirror Symmetry, 6 Usacheva str., Moscow 119048, Russia.}
}
\email{michael.a.ovcharenko@gmail.com, ovcharenko@mi-ras.ru}

\DeclareMathOperator{\Aut}{Aut}
\DeclareMathOperator{\Cl}{Cl}
\DeclareMathOperator{\codim}{codim}
\DeclareMathOperator{\coker}{coker}
\DeclareMathOperator{\depth}{depth}
\DeclareMathOperator{\End}{End}
\DeclareMathOperator{\GL}{GL}
\DeclareMathOperator{\Gr}{Gr}
\DeclareMathOperator{\Hom}{Hom}
\DeclareMathOperator{\height}{ht}
\DeclareMathOperator{\Id}{Id}
\DeclareMathOperator{\im}{im}
\DeclareMathOperator{\Lie}{Lie}
\DeclareMathOperator{\Proj}{Proj}
\DeclareMathOperator{\PGL}{PGL}
\DeclareMathOperator{\rk}{rk}
\DeclareMathOperator{\SL}{SL}
\DeclareMathOperator{\SO}{SO}
\DeclareMathOperator{\Sing}{Sing}
\DeclareMathOperator{\Spec}{Spec}
\DeclareMathOperator{\Stab}{Stab}
\DeclareMathOperator{\Sym}{Sym}

\begin{document}

\begin{abstract}
  We propose an explicit construction of a weighted generalised Grassmannian. For a weighted Grassmannian (i.e., for series A) we obtain an effective parametrisation of possible \(\mathbb{Z}\)-gradings on Pl\"{u}cker coordinates, and provide the explicit formulae for its dualising sheaf and Hilbert series in terms of this parametrisation. Our approach can be generalised to other irreducible root systems.
\end{abstract}

\maketitle

\section{Introduction}\label{section:introduction}

In this paper we give an explicit description of weighted Grassmannians: possible \(\mathbb{Z}\)-gradings on Pl\"{u}cker coordinates, dualising sheaf, singular locus, and Hilbert series.

Complete intersections in projective spaces are standard examples of projective varieties whose geometric properties can be expressed in combinatorial terms. For example, if \(X \subset \mathbb{P}^n\) is a smooth complete intersection of hypersurfaces of degrees \((d_1, \ldots, d_c)\), then its anticanonical sheaf \(\omega_X^{\vee}\) is isomorphic to \(\mathcal{O}_X(n + 1 - \sum_{j = 1}^c d_j)\). In particular, if \(\sum_{j = 1}^c d_j < n + 1\), then \(X\) is a \emph{Fano variety}, i.e., \(\omega_X^{\vee}\) is ample: such varieties constitute a fundamental part of the classification of projective varieties.

It is natural to replace the ambient space of a complete intersection with another (possibly, singular) Fano variety with similar geometric properties, and consider complete intersections therein. Two main sources of such ambient varieties are \emph{weighted projective spaces} and \emph{generalised Grassmannians}. We refer the reader to~\cite{przyjalkowski/weighted} for preliminaries on weighted projective spaces, and to~\cite[Section~2]{munoz/campana} for a survey of rational homogeneous spaces. Smooth Fano varieties naturally appear in this way, the idea goes back to the original work of Mukai (see~\cites{debiase/fano,mukai/fano}).

Geometric properties of complete intersections in a weighted projective
space are determined by the combinatorics of weights and degrees (for example, see~\cites{chen/quasismooth,pizzato/nonvanishing,przyjalkowski/automorphisms,przyjalkowski/bounds}). In contrast, geometric properties of a complete intersection in a generalised Grassmannian depend on the representation theory of a simple linear algebraic group (for example, see~\cites{konno/torelli-I,konno/torelli-II,konno/torelli-III}).

One may wonder if these approaches can be mixed, i.e., can we introduce weights in the representation-theoretic construction of a generalised Grassmannian. The work on this matter was started by Corti--Reid and Grojnowski (see~\cite{corti/weighted}), and was followed by Qureshi, Szendr\H{o}i et al. (for example, see~\cites{qureshi/flag-I,qureshi/threefolds,qureshi/flag-II}).
The main idea of their approach can be described as follows.

Let \(G\) be a reductive linear algebraic group, and \(P \subset G\) be a maximal parabolic subgroup. We denote by \(Y = G / P\) the associated generalised Grassmannian. It admits a minimal \(G\)-equivariant embedding \(Y \hookrightarrow \mathbb{P}\), where \(\psi \colon G \hookrightarrow \GL(W)\) is the corresponding fundamental representation. Its affine cone \(\widehat{Y} \subset \mathbb{A}(W)\) admits a natural action of both \(G\) and \(Z(\GL(W)) \simeq \mathbb{G}_m\), where we denote by \(\mathbb{A}(W)\) the associated affine space of \(W\). Let \(\mu^{\vee}\) be a (multiplicative) coweight of \(G\), and
\[
  \tau^{\vee}_W \colon \mathbb{G}_m \hookrightarrow \GL(W), \quad
  \tau^{\vee}_W(c) = c \cdot \Id_W \in \GL(W),
\]
be the central coweight. Then \(\gamma^{\vee} = \psi(\mu^{\vee}) + m \cdot \tau^{\vee}_W\) defines a \(\mathbb{Z}_{+}\)-grading on the vector space \(W\) for \(m \gg 0\) (note that the second summand is necessary if \(G\) is simple). The corresponding GIT quotient \(\widehat{Y} // \mathbb{G}_m\) by the coweight \(\gamma^{\vee}\) is usually called a \emph{weighted generalised Grassmannian} (see~\cite[Sections~2--3]{qureshi/flag-I}).

In particular, if \(\mu^{\vee} = 0\), then we obtain usual generalised Grassmannians. Another example is a weighted projective space: it arises from the standard representation of \(G = \GL(V)\). Note that we cannot replace \(\GL(V)\) with the action of \(\SL(V) \times \mathbb{G}_m\): in this way we obtain only weighted projective spaces \(\mathbb{P}(a_0, \ldots, a_n)\) such that \(\sum_{i = 0}^n a_i\) is divisible by \(\dim(V) = n + 1\). In~\cites{corti/weighted,abe/equivariant,azam/equivariant} a weighted Grassmannian was defined in terms of coweights of \(\GL(V) \times \mathbb{G}_m\) (see Remark~\ref{remark:GL-construction}).

In this paper we propose an explicit construction of a weighted generalised Grassmannian. Namely, for a given fundamental representation \(\psi \colon G \hookrightarrow \GL(W)\) of any simple algebraic group \(G\) we provide an explicit method of describing the coweight lattice of the group \(G(\psi) = \psi(G) \cdot Z(\GL(W))\) and computing its integral basis. It has to be noted that in general it involves cumbersome calculations for any fundamental representation on a case-by-case basis (see Example~\ref{example:weighted-quadrics}). In this paper we also provide a complete description of \(\mathbb{Z}_{+}\)-gradings for \emph{weighted Grassmannians} (i.e., for series A). It coincides with the natural description of the \(\mathbb{Z}_{+}\)-grading of a weighted projective space \(\mathbb{P}(a_0, \ldots, a_n)\), but in a general case there are certain differences.

Namely, in the case of a weighted projective space we have an obvious permutation action on its \(\mathbb{Z}\)-grading, preserving the variety up to isomorphism. It can be checked that this action can be identified with the action of the Weyl group \(\mathcal{W} \simeq S_{n + 1}\). For an arbitrary weighted generalised Grassmannian we also have the Weyl action on the \(\mathbb{Z}\)-grading, and it also preserves the variety up to isomorphism (see Section~\ref{section:construction}). We show that for a weighted Grassmannian the integral representation of \(\mathcal{W}\) is not a permutation representation, so there is no ``canonical'' description of \(\mathbb{Z}\)-gradings. Yet, we provide a ``sufficiently nice'' integral basis for \(G(\psi) = \psi(G) \cdot Z(\GL(W))\).

It is easy to see that the coweight lattice of the group \(G(\psi) = \psi(G) \cdot Z(\GL(W))\) contains all possible coweights of \(\GL(W)\) acting on the affine cone \(\widehat{Y}\) of the Grassmannian \(Y\). Actually, recall that \(\Aut^0(Y) \simeq \PGL(V)\) (for example, see~\cite{demazure/automorphisms}). Let us denote by \(T_{G(\psi)} = \psi(T_G) \cdot Z(\GL(W))\) the maximal torus of \(G(\psi)\). Note that Schur's lemma implies that \(\psi(Z(G)) = \psi(G) \cap Z(\GL(W))\), so we can identify \(T_{G(\psi)} / Z(\GL(W)) \simeq T_G / Z (G)\)  with a maximal torus of \(\PGL(V) \simeq \Aut^0(Y)\).

Let us formulate the main results of this paper.

\begin{theorem}[see Corollary~\ref{corollary:SL-grading}]\label{theorem:SL-grading}
  Let \(\widehat{Y} // \mathbb{G}_m\) be a weighted Grassmannian. Then there exists a tuple \((a_0, \ldots, a_n) \in \mathbb{Z}^{n + 1}\) such that this variety is isomorphic to a subvariety \(Y = \Gr_k(a_0, \ldots, a_n) \subset \mathrm{w} \mathbb{P}\), where we define \(Y\) and \(\mathrm{w} \mathbb{P}\) as follows.

  Let \(\psi \colon G \hookrightarrow \GL(W)\), \(W = \Lambda^k(V)\), be a fundamental representation of \(G = \SL(V)\). We choose the Pl\"{u}cker coordinates on \(V\) and \(W\):
  \[
    \Sym(V^{\vee}) \simeq \Bbbk[T_0, \ldots, T_n], \quad
    \Sym(W^{\vee}) \simeq \Bbbk[\{T_{i_1 < \cdots < i_k}\}].
  \]
  Let \(\mathrm{w} \mathbb{P}\) be the weighted projective space defined by the following \(\mathbb{Z}_{+}\)-grading on \(W\):
  \[
    \deg(T_{i_1 < \cdots < i_k}) =  -\sum_{l = 0}^{k - 2} a_l + \sum_{j = 1}^k a_{i_j} > 0.
  \]
  Then we denote by \(Y \subset \mathrm{w} \mathbb{P}\) its closed subvariety defined by usual Pl\"{u}cker relations.
\end{theorem}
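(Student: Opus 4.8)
The plan is to realise the weighted Grassmannian as a weighted projective \(\Proj\) and to read off the admissible \(\mathbb{Z}_{+}\)-gradings from the cocharacter lattice of the torus \(T_{G(\psi)}\). First I would recall that, by construction, \(\widehat{Y} // \mathbb{G}_m\) is the GIT quotient of the Pl\"{u}cker cone \(\widehat{Y} \subset \mathbb{A}(W)\), \(W = \Lambda^k(V)\), by a one-parameter subgroup \(\gamma^{\vee}\) whose image lies in \(G(\psi) = \psi(G) \cdot Z(\GL(W))\). Since the image of any one-parameter subgroup lies in a maximal torus, and any two maximal tori of the connected group \(G(\psi)\) are conjugate by an element of \(G(\psi)\) --- which acts on \(\widehat{Y}\) and hence alters the quotient only by an isomorphism --- I may assume \(\gamma^{\vee} \in X_{*}(T_{G(\psi)})\). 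As the \(\mathbb{Z}_{+}\)-grading hypothesis (\(m \gg 0\)) makes all weights of \(\gamma^{\vee}\) on \(W\) positive, the GIT quotient is the \(\Proj\) of the graded ring \(\Bbbk[\widehat{Y}] = \Bbbk[\{T_I\}] / (\text{Pl\"{u}cker relations})\) with the induced grading, and it remains only to identify this grading.

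Second, I would compute \(X_{*}(T_{G(\psi)})\) in Pl\"{u}cker coordinates. Over an algebraically closed field \(T_{G(\psi)}\) is the subtorus of the diagonal torus of \(\GL(W)\) consisting of the tuples \(\bigl(\lambda \prod_{i \in I} s_i\bigr)_{I}\) with \(\lambda \in \mathbb{G}_m\) and \((s_0, \ldots, s_n) \in \mathbb{G}_m^{n+1}\) (the constraint \(\prod_i s_i = 1\) coming from \(\SL(V)\) can be absorbed into \(\lambda\)). Hence a cocharacter assigns to \(T_I\) a degree of the form \(m + \sum_{i \in I} b_i\) with \(m, b_0, \ldots, b_n \in \mathbb{Q}\), the only condition being that all of these sums are integral; comparing two multi-indices that differ by a single swap forces \(b_i - b_j \in \mathbb{Z}\), so after writing \(b_i = a_i + \beta\) with \(a_i \in \mathbb{Z}\) one gets \(\deg(T_I) = c + \sum_{i \in I} a_i\) with \(c := m + k\beta \in \mathbb{Z}\) and \(a_0, \ldots, a_n \in \mathbb{Z}\). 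Conversely every such integral datum is realised (take \(m = c\), \(b_i = a_i\)). The remaining ambiguity in \((c; a_0, \ldots, a_n)\) is exactly the translation \((c; a_i) \mapsto (c - kt;\, a_i + t)\) with \(t \in \mathbb{Z}\), coming from the kernel of the covering torus.

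Third, I would put this datum into normal form. A short computation shows that the choice \(t = c + \sum_{l=0}^{k-2} a_l\) brings it to the form in which \(c = -\sum_{l=0}^{k-2} a_l\), that is,
\[
  \deg(T_{i_1 < \cdots < i_k}) = -\sum_{l=0}^{k-2} a_l + \sum_{j=1}^{k} a_{i_j},
\]
which is positive by the \(\mathbb{Z}_{+}\)-condition. Since the grading \(\deg(T_I) = c + \sum_{i \in I} a_i\) makes every Pl\"{u}cker relation homogeneous, \(\Bbbk[\widehat{Y}]\) is a graded quotient of \(\Bbbk[\{T_I\}]\), so its \(\Proj\) is precisely the subvariety \(Y \subset \mathrm{w}\mathbb{P}\) cut out by the Pl\"{u}cker relations in the weighted projective space \(\mathrm{w}\mathbb{P} = \Proj \Bbbk[\{T_I\}]\) carrying this grading. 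Combined with the first step this gives the asserted isomorphism \(\widehat{Y} // \mathbb{G}_m \simeq Y = \Gr_k(a_0, \ldots, a_n)\).

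The step I expect to be the main obstacle is the explicit determination of \(X_{*}(T_{G(\psi)})\) in the second paragraph --- concretely, the assertion that passing from \(\psi(G) \cdot \langle \tau^{\vee}_W \rangle\) to \(G(\psi)\) removes the constraint \(\sum_i a_i = 0\) that a bare \(\SL(V)\)-coweight would impose, while introducing nothing else. This hinges on understanding the finite overlap \(\psi(Z(G)) = \psi(G) \cap Z(\GL(W))\) (Schur's lemma), which is what produces the additional cocharacters, and on keeping careful track of the resulting lattice; it is also precisely the part of the argument that does not transfer verbatim to other irreducible root systems, where the overlap and the relevant lattice must be analysed case by case.
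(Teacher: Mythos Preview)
Your argument is correct and reaches the same conclusion, but by a genuinely different and more elementary route than the paper.

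The paper proves this via Proposition~\ref{proposition:SL-grading}: it embeds the coweight lattice \(Y(G(\psi))\) into \(Y(\GL(W))_{\mathbb{Q}}\) using fundamental coweights and the inverse Cartan matrix (Lemma~\ref{lemma:SL-lattice description}), and then shows by an induction on \(l=n+1-k\) that the explicit vectors \(\gamma^{\vee}_i\) form an integral basis. Corollary~\ref{corollary:SL-grading} then evaluates \(\sum_i a_i\gamma^{\vee}_i\) on Pl\"ucker coordinates to obtain the stated grading. Your approach bypasses all of this: you identify \(T_{G(\psi)}\) directly as the image of \(\mathbb{G}_m\times T_{\GL(V)}\) in Pl\"ucker coordinates (the key observation being that the \(\SL\)-constraint \(\prod_i s_i=1\) is absorbed by the central \(\mathbb{G}_m\)), which presents \(X_*(T_{G(\psi)})\) as \(\mathbb{Z}^{n+2}/\mathbb{Z}\cdot(-k,1,\dots,1)\); this is precisely the redundant \((w_0,\dots,w_n;u)\)-parametrisation discussed in Remark~\ref{remark:GL-construction}. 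You then kill the redundancy by the single normalisation \(c=-\sum_{l=0}^{k-2}a_l\), which the paper does not do (it instead constructs a basis ab initio).

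What each approach buys: yours is shorter and avoids the Cartan-matrix and inductive machinery entirely, and it makes transparent why an \((n{+}1)\)-parameter family exists. The paper's approach, on the other hand, produces a distinguished integral basis \(\{\gamma^{\vee}_i\}\) of \(Y(G(\psi))\) and situates the computation inside the general framework of Computation~\ref{computation:lattice-calculations}, which is what one needs to attack other root systems (cf.\ Example~\ref{example:weighted-quadrics} and Question~\ref{question:grading}); your own final paragraph correctly anticipates that your torus-absorption step is type-A specific. One small point of presentation: your passage through rational \(b_i\) is unnecessary, since the surjection \(\mathbb{G}_m^{n+2}\twoheadrightarrow T_{G(\psi)}\) has connected kernel and hence every cocharacter lifts integrally; but the conclusion is unaffected.
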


\begin{remark}\label{remark:GL-construction}
  In~\cites{corti/weighted,abe/equivariant,azam/equivariant} a weighted Grassmannian is defined in terms of \(\GL(V) \times \mathbb{G}_m\). The corresponding parametrisation of \(\mathbb{Z}\)-gradings has the form
  \[
    w_0, \ldots, w_n, u \in \mathbb{Z}, \quad
    \deg(T_{i_1 < \cdots < i_k}) = \sum_{j = 1}^k w_{i_j} + u.
  \]
  On the one hand, this approach is essentially equivalent to ours:
  \[
    a_i = \left ( \sum_{l = 0}^{k - 2} w_l \right) + u + w_i, \quad
    \deg(T_{i_1 < \cdots < i_k}) = -\sum_{l = 0}^{k - 2} a_l + \sum_{j = 1}^k a_{i_j} =
    \sum_{j = 1}^k w_{i_j} + u.
  \]
  Note also that we can obtain \((a_0, \ldots, a_n) = (0, \ldots, 0, 1, 0, \ldots, 0)\), where \(1\) is in \(i\)-th place, as \(w_i = 1\), \(w_j = u = 0\) if \(i > k - 2\), and as \(w_i = 1\), \(u = -1\), \(w_j = 0\) otherwise. On the other hand, this parametrisation is clearly not effective, because any choice of parameters \((w_0, \ldots, w_n; u)\) is defined only up to adding
  \[
    (w_0, \ldots, w_n; u) = \left ( -\frac{1}{k}, \ldots, -\frac{1}{k}, 1 \right).
  \]
  Therefore to eliminate \(u \in \mathbb{Z}\), we have to allow \(w_i\) to be rational, which leads to combinatorial complications (this phenomenon was first observed in~\cite{corti/weighted}).

  Apart from that, our approach can be straightforwardly generalised to other generalised Grassmannians (see the discussion below), while the described approach is type A specific. This is potentially useful for the classification of \(\mathbb{Q}\)-Fano threefolds (see~\cite{brown/grdb}): the extra \(\mathbb{Z}\)-gradings might yield new families of \(\mathbb{Q}\)-Fano threefolds.
\end{remark}

Moreover, we also explicitly describe the dualising sheaf and Hilbert series of a weighted Grassmannian in terms of the introduced parametrisation of \(\mathbb{Z}\)-gradings. Here we follow the notations of Theorem~\ref{theorem:SL-grading}.

\begin{theorem}[see Proposition~\ref{proposition:dualising-sheaf}]\label{theorem:dualising-sheaf}
  Let \(Y = \Gr_k(a_0, \ldots, a_n) \subset \mathrm{w} \mathbb{P}\) be a well-formed weighted Grassmannian. The dualising sheaves on \(Y\) and \(\mathrm{w} \mathbb{P}\) are isomorphic to
  \begin{gather*}
    \omega_Y \simeq \mathcal{O}_Y
    \left (
      - k
      \left (
        \sum_{i = 0}^n a_i
      \right ) +
      (n + 1)
      \left (
        \sum_{i = 0}^{k - 2} a_i
      \right )
    \right), \\
    \omega_{\mathrm{w} \mathbb{P}} \simeq \mathcal{O}_{\mathrm{w} \mathbb{P}}
    \left (
      - \binom{n}{k - 1}
      \left (
        \sum_{i = 0}^n  a_i
      \right ) +
      \binom{n + 1}{k}
      \left (
        \sum_{i = 0}^{k - 2} a_i
      \right )
    \right ).
  \end{gather*}
\end{theorem}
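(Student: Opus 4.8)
The plan is to deduce both identities from one principle: if a projective variety is the $\Proj$ of a normal, Cohen--Macaulay, well-formed graded ring $R$, then its dualising sheaf is $\mathcal{O}(-w)$, where $w$ is the degree of a generator of the graded canonical module $\omega_R$; and for both $\mathrm{w}\mathbb{P}$ and $Y$ this generator will be a top-degree differential form on the affine cone, whose $\mathbb{G}_m$-weight I evaluate against the coweight $\lambda^{\vee}(c) = (\operatorname{diag}(c^{a_0}, \ldots, c^{a_n}),\, c^{-\sum_{l = 0}^{k - 2} a_l}) \in \GL(V) \times Z(\GL(W))$ which, by Theorem~\ref{theorem:SL-grading}, realises the grading on $W = \Lambda^k V$ (the first factor contributes $\sum_{j \in S} a_j$ to $\deg T_S$, the central factor contributes $-\sum_{l = 0}^{k - 2} a_l$).

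First I would treat $\mathrm{w}\mathbb{P}$, where this is routine. Its affine cone is $\mathbb{A}(W)$ with coordinate ring $\Bbbk[\{T_S\}]$, $\deg T_S = d_S := -\sum_{l = 0}^{k - 2} a_l + \sum_{j \in S} a_j$, whose canonical module is generated by the top form $\bigwedge_S \mathrm{d}T_S$ in degree $\sum_S d_S$; hence $\omega_{\mathrm{w}\mathbb{P}} \simeq \mathcal{O}_{\mathrm{w}\mathbb{P}}(-\sum_S d_S)$ by the standard formula for the dualising sheaf of a well-formed weighted projective space. The claimed expression then follows from the elementary counts $\#\{S\} = \binom{n + 1}{k}$ and, for each fixed index $i$, $\#\{S : i \in S\} = \binom{n}{k - 1}$, which give $\sum_S d_S = \binom{n}{k - 1} \sum_i a_i - \binom{n + 1}{k} \sum_{l = 0}^{k - 2} a_l$.

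For $Y$ the input is the structure theory of the Plücker cone. The homogeneous coordinate ring $R$ of $Y$ in the weighted Plücker embedding is, after forgetting the grading, the Plücker coordinate ring of $\Gr(k, n + 1)$, which is classically known to be a normal Cohen--Macaulay (indeed Gorenstein) domain; these properties persist after regrading. Therefore $\omega_{\widehat{Y}} \simeq R(-w)$ for a unique $w \in \mathbb{Z}$, and, using normality, Cohen--Macaulayness and well-formedness of $Y$, $\omega_Y \simeq \widetilde{\omega_R} \simeq \mathcal{O}_Y(-w)$. To compute $w$ I would exploit equivariance: $\omega_{\widehat{Y}}$ carries a canonical $\GL(V) \times Z(\GL(W))$-equivariant structure and is free of rank $1$, so a generator $\Omega$ is an eigenvector --- under $\GL(V)$ by a character $\det^m$, and under $Z(\GL(W))$ by a weight $w_0$. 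The classical identity $\omega_{\Gr(k, n + 1)} \simeq \mathcal{O}(-(n + 1))$ gives $w_0 = n + 1$; and restricting the $\GL(V)$-action to $Z(\GL(V)) = \{c \cdot \Id_V\}$, which acts on $W = \Lambda^k V$ as $c^k \cdot \Id_W \in Z(\GL(W))$, forces $m(n + 1) = k(n + 1)$, i.e. $m = k$. Evaluating the character $g \mapsto \det(g)^k$ together with the $Z(\GL(W))$-weight $n + 1$ on $\lambda^{\vee}(c)$ then gives $w = k \sum_i a_i - (n + 1) \sum_{l = 0}^{k - 2} a_l$, hence the stated formula for $\omega_Y$.

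The hard part will be the transfer step in the last paragraph: making rigorous that the dualising sheaf of the (in general singular) variety $Y$ is computed by the graded canonical module of its weighted coordinate ring, and that this sheaf is exactly the reflexive sheaf $\mathcal{O}_Y(m)$. This is precisely where well-formedness enters, ensuring $\Sing(\mathrm{w}\mathbb{P}) \cap Y$ has codimension at least $2$ in $Y$ so that divisorial data descend from $R$; one must also be careful to invoke the Gorenstein property of the Plücker cone and its invariance under regrading. Granting these, everything else is formal, depending only on the $\GL(V) \times Z(\GL(W))$-equivariance of $\omega_{\widehat{Y}}$ and the classical value of the canonical bundle of $\Gr(k, n + 1)$.
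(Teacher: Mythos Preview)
Your argument is correct, and it follows a genuinely different route from the paper's proof of Proposition~\ref{proposition:dualising-sheaf}. The paper realises \(Y\) as a quotient \(\pi\colon\widetilde{Y}\to Y\) of the ordinary Grassmannian by a finite abelian group \(\mathcal{G}\) (Remark~\ref{remark:quotient}), reduces to the smooth locus via the \((S2)\)-extension Lemma~\ref{lemma:serre-sheaf-extension} and \(\omega_Y=\pi_*(\omega_{\widetilde{Y}}^{\mathcal{G}})\), and then writes down an explicit \(T_{G(\psi)}\)-invariant rational differential form on \(\widetilde{Y}\) whose divisor is \(-\sum_{\sigma}\widetilde{D}_{\sigma(\{0,\ldots,k-1\})}\), the sum running over the cyclic group \(\langle(0,1,\ldots,n)\rangle\); the degree is then read off as the sum of \(n+1\) Pl\"{u}cker weights in a cyclic window. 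By contrast you work entirely on the affine cone: you invoke the Gorenstein property of the Pl\"{u}cker ring so that \(\omega_{\widehat{Y}}\simeq R(-w)\), and determine \(w\) by an equivariance argument, pinning down the \(\GL(V)\)-character \(\det^{k}\) via the overlap \(Z(\GL(V))\hookrightarrow Z(\GL(W))\) and the classical value \(w_0=n+1\). Your approach avoids the explicit divisor computation and the finite-quotient description, at the cost of relying on the Gorenstein property (the paper only uses Cohen--Macaulayness, cf.\ Lemma~\ref{lemma:serre-sheaf-extension}); conversely, the paper's argument makes the anticanonical divisor visible as a concrete sum of Schubert divisors, which is geometrically informative but requires knowing that particular torus-invariant form. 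The ``transfer step'' you flag as delicate is exactly what the paper handles through Lemma~\ref{lemma:serre-sheaf-extension} together with \(\Sing(Y)=Y\cap\Sing(\mathrm{w}\mathbb{P})\) (Lemma~\ref{lemma:singular-locus}); your graded-canonical-module formulation is an equally valid packaging of the same well-formedness input.
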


\begin{remark}
  Theorem~\ref{theorem:dualising-sheaf} and the preceding discussion of the Weyl action seem to be in contradiction. On the one hand, the Weyl action of \(\mathcal{W} \simeq S_{n + 1}\) preserves a weighted Grassmannian up to isomorphism. On the other hand, the formulae of Theorem~\ref{theorem:dualising-sheaf} are clearly asymmetrical: we have ``distinguished'' parameters \((a_0, \ldots, a_{k - 2})\) among all parameters \((a_0, \ldots, a_n)\). As we have already mentioned, in general the coweight lattice as an integral representation of the Weyl group does not admit a permutation basis. In particular, the integral basis of Theorem~\ref{theorem:SL-grading} is not a permutation basis. Note that permutations of parameters \((a_0, \ldots, a_n)\) which preserve the partition of \(\{0, \ldots, n\}\) into \(\{0, \ldots, k - 2\}\) and its complement also preserve the formulae of Theorem~\ref{theorem:dualising-sheaf}. For an arbitrary permutation its action on parameters \((a_0, \ldots, a_n)\) is not obvious, but can be explicitly computed as described in Remark~\ref{remark:weyl-action}.

  To sum it up, the Weyl action of \(\mathcal{W} \simeq S_{n + 1}\) permutes values of any given \(\mathbb{Z}\)-grading, but \(\mathcal{W}\) usually acts non-obviously on any parametrisation of all \(\mathbb{Z}\)-gradings.
\end{remark}

\begin{example}
  In the setting of Remark~\ref{remark:GL-construction} the formula of Theorem~\ref{theorem:dualising-sheaf} reads as
  \[
    \omega_Y \simeq \mathcal{O}_Y
    \left (
      - k \left ( \sum_{i = 0}^n w_i \right ) - (n + 1) u
    \right).
  \]
  So it generalises the similar formulae in~\cites{corti/weighted,qureshi/flag-I}, as to be expected.
\end{example}

\begin{theorem}[see Proposition~\ref{proposition:hilbert-series}]
  Let \(Y = \Gr_k(a_0, \ldots, a_n)\) be a weighted Grassmannian. Its Hilbert series (see Definition~\ref{definition:hilbert-series}) equals \(\mathcal{H}(Y) = \mathcal{P}(Y) / \mathcal{Q}(Y)\), where
  \begin{gather*}
    \mathcal{P}(Y) = \sum_{I = (i_1 < \cdots < i_k)} \frac{F(I)}{1 - t^{a(I)}}, \quad
    \mathcal{Q}(Y) = \prod_{\substack{i,j = 0, \ldots, n \\ i < j}} (1 - t^{a_j - a_i}); \\
    a(I) = -\sum_{l = 0}^{k - 2} a_l + \sum_{j = 1}^k a_{i_j} > 0, \quad
    F(I) =
    \left (
      \sum_{\sigma \in S_I} (-1)^{\sigma} t^{f(\sigma)}
    \right ); \\
    S_I = \{\sigma \in S_{n + 1} : \sigma(\{0, \ldots, k - 1\}) = I\}, \quad
    f(\sigma) = \sum_{i = 0}^n (i - \sigma^{-1}(i)) a_i.
  \end{gather*}
\end{theorem}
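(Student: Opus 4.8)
The plan is to realise the Hilbert series as a specialisation of the full torus character of the homogeneous coordinate ring of the ordinary Grassmannian, and then to evaluate that character by means of the Weyl character formula and a geometric summation.

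First I would invoke Theorem~\ref{theorem:SL-grading}: the homogeneous coordinate ring of \(Y=\Gr_k(a_0,\ldots,a_n)\) is the Pl\"ucker coordinate ring \(R=\bigoplus_{m\ge0}R_m\) of the ordinary Grassmannian \(\Gr_k(V)\), regraded by the coweight \(\gamma^{\vee}\) defining the quotient. Since the Pl\"ucker embedding of \(\Gr_k(V)\) is projectively normal, \(R_m=H^0(\Gr_k(V),\mathcal O(m))\), which by Borel--Weil is an irreducible \(\SL(V)\)-module whose set of extremal weights is the single Weyl orbit \(\{-m\,\omega_I\}\), where \(I\) runs over the \(k\)-subsets of \(\{0,\ldots,n\}\), \(\omega_I=\sum_{j\in I}\chi_j\), and \(\chi_0,\ldots,\chi_n\) is the weight basis of \(V\). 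By Definition~\ref{definition:hilbert-series}, \(\mathcal H(Y)\) is the image of \(\mathrm{ch}(R)=\sum_{m\ge0}\mathrm{ch}(R_m)\) under the specialisation ring homomorphism \(\mathrm{sp}\) sending a torus weight \(\chi\) to \(t^{\langle\gamma^{\vee},\chi\rangle}\) (this is well defined with values in \(\mathbb Z[[t]]\) because \(R\) is finitely generated and positively graded). By the explicit integral basis of Section~\ref{section:construction}, \(\mathrm{sp}\) sends \(e^{-\omega_I}\) to \(t^{a(I)}\) (the degree of the Pl\"ucker coordinate \(T_I\)) and, the central shift of \(\gamma^{\vee}\) cancelling in differences, sends \(e^{\chi_i-\chi_j}\) to \(t^{a_i-a_j}\).

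Next I would expand each \(\mathrm{ch}(R_m)\) by the Weyl character formula, \(\mathrm{ch}(R_m)=\delta^{-1}\sum_{w\in\mathcal{W}}(-1)^w e^{w(\rho-m\,\omega_{I_0})}\) with \(\delta=\sum_w(-1)^w e^{w\rho}=e^{\rho}\prod_{\alpha>0}(1-e^{-\alpha})\) and \(I_0=\{0,\ldots,k-1\}\) (after a relabelling of coordinates absorbing the identification \(V\leftrightarrow V^{\vee}\), so that the stabiliser of the base weight is \(S_{\{0,\ldots,k-1\}}\times S_{\{k,\ldots,n\}}\)); then write \(w(\rho-m\,\omega_{I_0})=w\rho-m\,\omega_{w(I_0)}\) and group the Weyl elements by the value \(I=w(I_0)\), the fibre of which is exactly the coset \(S_I\) of the statement. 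Summing the resulting geometric series in \(m\)---which is legitimate after specialisation, as an identity in \(\mathbb Z[[t]]\), precisely because every \(a(I)>0\)---yields
\[
  \sum_{m\ge0}\mathrm{ch}(R_m)
  =\sum_{I}\frac{1}{1-e^{-\omega_I}}\cdot\frac{\sum_{w\in S_I}(-1)^we^{w\rho}}{\delta}
  =\sum_{I}\frac{1}{1-e^{-\omega_I}}\cdot\frac{\sum_{w\in S_I}(-1)^we^{w\rho-\rho}}{\prod_{\alpha>0}(1-e^{-\alpha})}.
\]
Applying \(\mathrm{sp}\) term by term: the Weyl denominator \(\prod_{\alpha>0}(1-e^{-\alpha})\) goes to \(\prod_{i<j}(1-t^{a_j-a_i})=\mathcal Q(Y)\); the factor \(1-e^{-\omega_I}\) goes to \(1-t^{a(I)}\); and, reading \(\rho\) with \(i\)-th coordinate \(n-i\), one computes \(w\rho-\rho=\sum_i\bigl(i-w^{-1}(i)\bigr)\chi_i\), whence \(\langle\gamma^{\vee},w\rho-\rho\rangle=\sum_i\bigl(i-w^{-1}(i)\bigr)a_i=f(w)\), the central shift dropping out because \(\sum_i\bigl(i-w^{-1}(i)\bigr)=0\), while \((-1)^w\) is the sign of \(w\). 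Thus the numerator attached to \(I\) goes to \(F(I)\), and collecting the three pieces gives \(\mathcal H(Y)=\sum_I F(I)\big/\bigl((1-t^{a(I)})\,\mathcal Q(Y)\bigr)=\mathcal P(Y)/\mathcal Q(Y)\).

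The step I expect to be the main obstacle is the consistent bookkeeping of conventions, rather than any single hard computation: the identification \(V\leftrightarrow V^{\vee}\) (which fixes the base subset \(I_0\) and hence the parametrisation of the cosets \(S_I\)), the sign in \(f(\sigma)\)---this genuinely depends on whether \(\rho\) is read with coordinates \(n-i\) or \(i\), and it matters because \(S_I\) is a coset and not a subgroup---and the fact that \(\gamma^{\vee}\) is only determined up to a fractional central shift, all of which must be pinned down via the explicit basis of Section~\ref{section:construction}; one should also track the possible global sign incurred when applying the Weyl character formula to the (not necessarily dominant) extremal weight \(-m\,\omega_{I_0}\). A secondary point needing care is the rigour of rearranging the double sum over \(\mathcal{W}\) and over \(m\): one should verify that after specialisation each regrouped block lies in \(\mathbb Z[[t]]\) and that the rearrangement is an identity in its field of fractions, which once more is exactly guaranteed by \(a(I)>0\) for every \(I\) (equivalently, by \(\gamma^{\vee}\) defining a genuine \(\mathbb Z_{+}\)-grading).
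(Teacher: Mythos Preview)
Your proposal is correct and follows essentially the same route as the paper. The paper quotes the Weyl-character-type formula for the Hilbert series from \cite{qureshi/flag-I} (Proposition~\ref{proposition:hilbert-series-weyl}) as a black box and then simply evaluates its three ingredients in the parametrisation of Proposition~\ref{proposition:SL-grading}: the positive-root product gives \(\mathcal{Q}(Y)\), the Weyl orbit of \(\omega_k\) gives the index set of \(k\)-subsets \(I\) with the correct \(a(I)\), and a short computation yields \(f(\sigma)\). You instead re-derive that quoted formula from Borel--Weil plus the Weyl character formula and a geometric summation over \(m\); this is more self-contained but is exactly the argument underlying Proposition~\ref{proposition:hilbert-series-weyl}, so the two proofs coincide once that step is granted.

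The one place the two genuinely differ is the evaluation of \(f(\sigma)=\langle\sigma\rho-\rho,\mu^{\vee}\rangle\). You read \(\rho\) in the standard coordinates \((n,n-1,\ldots,0)\) and get the formula in one line (with the central shift dropping out because \(\sum_i(i-\sigma^{-1}(i))=0\)); the paper instead expands \(\rho=\sum_l c_l\alpha_l\) in simple roots, rewrites \(\sigma^{-1}(\mu^{\vee})\) in the fundamental-coweight basis, and obtains the same expression via a telescoping sum on the differences \(c_{l+1}-c_l\). Your route is shorter; the paper's stays inside the root-system bookkeeping already set up in Section~\ref{section:series-A-grading}. The convention issues you flag (the \(V\leftrightarrow V^{\vee}\) identification fixing \(I_0\), the sign of \(f(\sigma)\), the fractional central shift) are real but routine, and the paper handles them implicitly through the normalisations of Proposition~\ref{proposition:SL-grading} and Lemma~\ref{lemma:weyl-orbit}.
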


Our approach mainly deals with the combinatorics of irreducible root systems, so we expect that it can be extended to other weighted generalised Grassmannians (see Example~\ref{example:weighted-quadrics} and Question~\ref{question:grading}). Note that the case of series B should be easy, since \(\SO_{2 n + 1}\) is of adjoint type (see Example~\ref{example:series-B}). Weighted generalised Grassmannians (and, more generally, weighted flag varieties) were studied in~\cite{graham/weighted} (see also~\cites{abe/equivariant,azam/equivariant}) from a uniform Lie-theoretic point of view.

\begin{structure}
  In Section~\ref{section:construction} we provide an explicit construction of a weighted generalised Grassmannian. We describe how one can extract a \(\mathbb{Z}\)-grading from the representation-theoretic data. We also discuss basic geometric properties of weighted generalised Grassmannians.

  In Section~\ref{section:series-A-grading} we apply this approach to weighted Grassmannians to obtain an effective parametrisation of \(\mathbb{Z}\)-gradings. In Section~\ref{section:series-A-invariants} we describe the dualising sheaf and the Hilbert series of a weighted Grassmannian in terms of this parametrisation.
\end{structure}

\begin{conventions}
  We work over an algebraically closed field \(\Bbbk\) of characteristic zero. Throughout the paper \(G\) is a simple linear algebraic group, and \(\psi \colon G \hookrightarrow \GL(W)\) is a fundamental representation. For explicit computations we follow the standard numeration of simple roots from~\cite{bourbaki/lie}.

  A bit on terminology: usual representation-theoretic terminology (namely, ``weights'' and ``coweights'') causes confusion with weights of a weighted projective space. In this paper we refer to weights of a weighted projective space as its \(\mathbb{Z}\)-grading. We hope that it would not confuse the reader.
  
  We refer the reader to Sections~\ref{section:construction} and~\ref{section:series-A-grading} for various notations for weighted generalised Grassmannians, and for notations relevant to the case \(G = \SL(V)\), correspondingly.
\end{conventions}

\begin{acknowledgements}
  The article was prepared within the framework of the project ``International academic cooperation'' HSE University. The author is grateful to V.~Przyjalkowski for valuable suggestions, and to W.~Graham and S.~Larson for helpful comments. We also want to thank the referee for useful remarks.
\end{acknowledgements}

\section{Weighted generalised Grassmannians}\label{section:construction}

In this section we describe our construction of a weighted generalised Grassmannian, and outline a way for explicit description of its possible \(\mathbb{Z}\)-gradings.

\subsection{Preliminaries}

Let us start with introducing some notations. We refer the reader to~\cites{humphreys/lie,malle/groups} for the representation-theoretic background.

\begin{notation}
  Let \(G\) be a simple linear algebraic group. We always choose a maximal torus \(T_G \subset G\). We denote by \(X(T_G)\) and \(Y(T_G)\) the weight and coweight lattices of \(T_G\), correspondingly. We also denote by \(\Phi \subset X(T_G)\) and \(\Phi^{\vee} \subset Y(T_G)\) the root system of \(G\) and its dual.

  We fix a basis \(\Delta = \{\alpha_1, \ldots, \alpha_n\} \subset \Phi\) of the root system \(\Phi\), and denote by \(\{\alpha_1^{\vee}, \ldots, \alpha_n^{\vee}\} \subset \Phi^{\vee}\) the associated simple coroots.
\end{notation}

\begin{definition}
  Let \(P \subset G\) be a maximal parabolic subgroup. We refer to the homogeneous space \(Y = G / P\) as a \emph{generalised Grassmannian}.
\end{definition}

\begin{remark}
  A generalised Grassmannian \(Y\) admits a minimal \(G\)-equivariant embedding \(Y \hookrightarrow \mathbb{P}\), where \(\psi \colon G \hookrightarrow \GL(W)\) is a fundamental representation of \(G\) associated with the choice of a maximal parabolic subgroup \(P\).
\end{remark}

\begin{notation}
  For a fundamental representation \(\psi \colon G \hookrightarrow \GL(W)\) we choose a maximal torus \(T_W \subset \GL(W)\). We denote by \(G(\psi)\) the product \(\psi(G) \cdot \mathbb{G}_m\) in \(\GL(W)\), where \(\mathbb{G}_m = Z(\GL(W))\). Note that \(G(\psi)\) is equipped with a maximal torus of the form \(T_{G(\psi)} = \psi(T_G) \cdot \mathbb{G}_m \subset T_W\). We denote by \(X(G(\psi))\) and \(Y(G(\psi))\) the weight and coweight lattices of the group \(G(\psi)\). We also denote by \(Y_+(G(\psi)) \subset Y(G(\psi))\) the subsemigroup of coweights endowing the vector space \(W\) with a \(\mathbb{Z}_{+}\)-grading. At last, we denote by \(\tau^{\vee}_W \colon \mathbb{G}_m \hookrightarrow \GL(W)\), \(\tau^{\vee}_W(c) = c \cdot \Id_W\), the central coweight of \(\GL(W)\). 
\end{notation}

\subsection{The construction}

Now we can define weighted generalised Grassmannians.

\begin{definition}[cf.~{\cites{corti/weighted,qureshi/flag-I}}]
  Let \(\psi \colon G \hookrightarrow \GL(W)\) be a fundamental representation. Then \(G(\psi) = \psi(G) \cdot \mathbb{G}_m\) acts on the affine cone \(\widehat{Y} \subset \mathbb{A}(W)\) of the generalised Grassmannian, where \(\mathbb{A}(W)\) is the associated affine space of \(W\). The \emph{weighted generalised Grassmannian} is a GIT quotient \(\widehat{Y} // \mathbb{G}_m\) by a coweight \(\gamma^{\vee} \in Y_+(G(\psi))\) endowing the vector space \(W\) with a \(\mathbb{Z}_{+}\)-grading.
\end{definition}

\begin{example}\label{example:WPS-lattice}
  Let \(G = \SL(V)\), and \(\psi \colon G \hookrightarrow \GL(V)\) be a standard representation, so \(G(\psi) = \GL(V)\). After a choice of a basis on \(V\) the coweights in \(Y_+(G(\psi))\) can be identified with positive integer vectors \((a_0, \ldots, a_n) \in \mathbb{Z}^{n + 1}\). In other words, we obtain weighted projective spaces \(\mathbb{P}(a_0, \ldots, a_n)\) for any possible \(\mathbb{Z}_{+}\)-grading on \(V\).
\end{example}

\begin{remark}\label{remark:quotient}
  By construction any weighted generalised Grassmannian \(Y \subset \mathrm{w} \mathbb{P}\) is equipped with the embedding into a weighted projective space, and it is defined there by the same (generalised) Pl\"{u}cker relations. Let us also recall that there is a natural finite morphism \(\mathbb{P} \twoheadrightarrow \mathrm{w} \mathbb{P}\) realising \(\mathrm{w} \mathbb{P}\) as a quotient of \(\mathbb{P}\) by a finite abelian group acting on \(\mathbb{P}\) by roots of unity. This realises \(Y\) as a quotient of a usual generalised Grassmannian \(G / P \hookrightarrow \mathbb{P}\) by the same finite abelian group.

  In particular, this implies that \(Y\) is covered by quotients of \(\mathbb{A}^{\dim(Y)}\) by finite cyclic groups, which are images of big Schubert cells in \(G / P\) (see~\cite[\nopp 3.3]{lakshmibai/monomial}).
\end{remark}

\begin{proposition}
  A weighted generalised Grassmannian \(Y\) is a Mori dream space with \(\Cl(Y) \simeq \mathbb{Z}\).
\end{proposition}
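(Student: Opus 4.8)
The plan is to exhibit $Y$ explicitly as a GIT quotient of a quasi-affine variety by a torus action, which immediately places it in the realm of Mori dream spaces, and then to pin down the class group by a direct computation. First I would use Remark~\ref{remark:quotient}: the weighted generalised Grassmannian $Y$ is a quotient of the ordinary generalised Grassmannian $G/P \hookrightarrow \mathbb{P}$ by a finite abelian group $A$ acting through roots of unity. Since $G/P$ is a smooth projective rational homogeneous space, it is a Mori dream space (its Cox ring is finitely generated — indeed for a Grassmannian the Cox ring is the homogeneous coordinate ring, which is Noetherian), and the class of Mori dream spaces is stable under taking quotients by finite groups, so $Y$ is a Mori dream space. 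Alternatively, and perhaps more cleanly, one observes directly that $Y = \widehat{Y} \smallsetminus \{0\} \, / \, \mathbb{G}_m$ where $\widehat{Y}$ is the affine cone over $G/P$ in its Plücker embedding: the coordinate ring $\Bbbk[\widehat{Y}] = \bigoplus_{d \geq 0} H^0(G/P, \mathcal{O}(d))$ is a finitely generated, normal, Gorenstein (even factorial, by a theorem on affine cones over Grassmannians) $\Bbbk$-algebra, so $Y$ is a geometric quotient of an open subset of an affine space by $\mathbb{G}_m$ and hence a Mori dream space.

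For the statement $\Cl(Y) \simeq \mathbb{Z}$, the key steps in order are: (1) identify the Cox ring / total coordinate ring of $Y$ with $\Bbbk[\widehat{Y}]$ graded by the chosen $\mathbb{Z}_+$-grading $\gamma^{\vee}$; (2) invoke the fact that $\widehat{Y}$, the affine cone over a Grassmannian in its Plücker embedding, has trivial (or at worst torsion-free rank one) class group — more precisely, the Plücker coordinate ring of $\Gr_k(V)$ is a UFD, so $\Cl(\widehat{Y}) = 0$; (3) conclude via the standard exact sequence for quotients by $\mathbb{G}_m$ (or by the characteristic $p=0$ version of the Mori-dream-space formalism) that $\Cl(Y)$ is generated by the class of $\mathcal{O}_{\mathrm{w}\mathbb{P}}(1)|_Y$, with no torsion since $\widehat{Y}$ is factorial and the $\mathbb{G}_m$-action on $\widehat{Y} \smallsetminus \{0\}$ is free in codimension one. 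A convenient way to make (3) precise: $\Cl(Y) \simeq \operatorname{Pic}^{\mathbb{G}_m}(\widehat{Y} \smallsetminus \{0\}) \simeq X(\mathbb{G}_m) \simeq \mathbb{Z}$, using that $\widehat{Y} \smallsetminus \{0\}$ is factorial with only constant invertible functions and trivial class group, so every equivariant line bundle is the trivial bundle twisted by a character.

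The main obstacle I anticipate is justifying that $\widehat{Y}$ is factorial (equivalently that the Plücker coordinate ring of the Grassmannian is a UFD), and — more subtly — controlling what happens at the vertex of the cone: the GIT quotient $\widehat{Y} // \mathbb{G}_m$ by $\gamma^{\vee}$ removes the origin, but one must check the $\mathbb{G}_m$-action on $\widehat{Y} \smallsetminus \{0\}$ has no fixed points and is free in codimension one, so that the isomorphism $\Cl(Y) \simeq \mathbb{Z} \cdot [\mathcal{O}_Y(1)]$ holds without a spurious finite quotient. This is where one uses that $\gamma^{\vee} = \psi(\mu^{\vee}) + m \cdot \tau^{\vee}_W$ acts with all Plücker weights strictly positive (Theorem~\ref{theorem:SL-grading}), so the only fixed locus would be the origin. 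The UFD property of the Grassmannian's Plücker ring is classical (it follows, e.g., from the fact that $\Gr_k(V)$ has $\operatorname{Pic} \simeq \mathbb{Z}$ generated by $\mathcal{O}(1)$ together with normality of the cone and vanishing of $H^1$), so modulo citing that input the argument is routine.
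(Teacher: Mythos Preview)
Your argument is correct and close in spirit to the paper's, but the packaging differs. The paper does not argue via finite-quotient stability or the equivariant Picard group; instead it observes that \(Y \simeq \Proj(R)\) where \(R\) is the Pl\"{u}cker coordinate ring of the ordinary generalised Grassmannian \(\widetilde{Y}\), notes that \(R\) (with its standard grading) is already known to be the Cox ring of \(\widetilde{Y}\), and then appeals to the \emph{bunched ring} formalism of Arzhantsev--Derenthal--Hausen--Laface (\cite[\nopp 3.2]{arzhantsev/cox}): one checks that \(R\) with the new \(\mathbb{Z}_{+}\)-grading is still a bunched ring, and then \cite[Theorem~3.2.14]{arzhantsev/cox} delivers simultaneously that \(Y\) is a Mori dream space with Cox ring \(R\) and \(\Cl(Y) \simeq \mathbb{Z}\).

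What you do is essentially unwind that black box in this rank-one situation: the factoriality of \(\widehat{Y}\) (equivalently, that \(R\) is a UFD) together with \(\Bbbk[\widehat{Y}]^{\times} = \Bbbk^{\times}\) is exactly the input that makes \(R\) a ``factorially graded'' ring in the sense of \cite{arzhantsev/cox}, and your computation \(\Cl(Y) \simeq \operatorname{Pic}^{\mathbb{G}_m}(\widehat{Y} \setminus \{0\}) \simeq X(\mathbb{G}_m)\) is the rank-one instance of the general grading-group/\(\Cl\) identification in that formalism. Your route is more self-contained and avoids importing the bunched-ring language; the paper's route is shorter on the page but defers the work to the reference. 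One caution on your first alternative: stability of Mori dream spaces under finite quotients is true but not entirely trivial (it is a consequence of Okawa's theorem on surjective morphisms, or can be seen directly via invariants of the Cox ring), so if you keep that line you should cite it rather than assert it.
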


\begin{proof}
  By construction \(Y \simeq \Proj(R)\), where \(R\) is the homogeneous coordinate ring of a generalised Grassmannian \(\widetilde{Y}\) (equipped with the corresponding \(\mathbb{Z}_{+}\)-grading). It is well-known that \(\widetilde{Y}\) is a Mori dream space with Cox ring \(R\) (with its standard \(\mathbb{Z}\)-grading). Vice versa, for a given so-called \emph{bunched ring} \(R\) (informally speaking, a graded ring with additional combinatorial data) one can recover a Mori dream space with Cox ring \(R\) (see~\cite[\nopp 3.2]{arzhantsev/cox}). It is well-known that \(\widetilde{Y}\) arises in this way (see~\cite[\nopp 3.2.3]{arzhantsev/cox}). Following the exposition in~\cite[\nopp 3.2.3]{arzhantsev/cox}, it may be checked that for a non-standard \(\mathbb{Z}_{+}\)-grading \(R\) is still a bunched ring. Then \(Y\) is a Mori dream space with Cox ring \(R\) and \(\Cl(Y) \simeq \mathbb{Z}\) by~\cite[Theorem~3.2.14]{arzhantsev/cox}.
\end{proof}

\subsection{Computation of the lattice \texorpdfstring{\(Y(G(\psi))\)}{Y(G(psi))}}\label{subsection:lattice-computations}

\begin{remark}\label{remark:rational-splitting}
  In our notation we have a natural chain of inclusions of coweight lattices \(Y(G) \hookrightarrow Y(G(\psi)) \hookrightarrow Y(\GL(W))\) and the following exact sequence:
  \[
    0 \rightarrow Y(G) \rightarrow Y(G(\psi)) \xrightarrow{\det} \Hom(\mathbb{G}_m, \mathbb{G}_m)
    \rightarrow \mathbb{Z} / \dim(W) \mathbb{Z} \rightarrow 0.
  \]
  As a consequence, the \emph{rational} coweight lattice \(Y(G(\psi))_{\mathbb{Q}} = Y(G(\psi)) \otimes_{\mathbb{Z}} \mathbb{Q}\) admits a splitting \(Y(G(\psi))_{\mathbb{Q}} \simeq Y(G)_{\mathbb{Q}} \oplus \Hom(\mathbb{G}_m, \mathbb{G}_m)_{\mathbb{Q}}\) via the coweight \(\dim(W)^{-1} \cdot \tau^{\vee}_W\).
\end{remark}

\begin{definition}
  Put \(\Omega = \Hom(\mathbb{Z} \Phi^{\vee}, \mathbb{Z})\). We have the inclusions \(\mathbb{Z} \Phi \subset X(G) \subset \Omega\), where \(X(G) \simeq \Hom(Y(G), \mathbb{Z}) \hookrightarrow \Hom(\mathbb{Z} \Phi^{\vee}, \mathbb{Z}) = \Omega\). The \emph{fundamental group} of \(G\) is defined as \(\Lambda(G) = \Omega / X(G)\). If \(X(G) = \Omega\), then the group \(G\) is \emph{simply connected}, and if \(X(G) = \mathbb{Z} \Phi\), then \(G\) is of \emph{adjoint type}.
\end{definition}

\begin{definition}\label{definition:fundamental-coweights}
  In the introduced notation the integral weights \(\omega_i \in X(G)\) s.t. \(\langle \omega_i, \alpha_j^{\vee} \rangle = \delta_{ij}\) for any \(i, j\) are called \emph{fundamental weights} of the group \(G\). Similarly, \emph{rational} coweights \(\omega_j^{\vee} \in Y(G)_{\mathbb{Q}}\) s.t. \(\langle\alpha_i, \omega_j^{\vee}\rangle = \delta_{ij}\) are called \emph{fundamental coweights} of \(G\). Note that they are integral if and only if \(G\) is of adjoint type.
\end{definition}

\begin{remark}\label{remark:coweight-lattice-description}
  Note that the rational coweight lattice \(Y(G(\psi))_{\mathbb{Q}}\) from Remark~\ref{remark:rational-splitting} is  generated by fundamental coweights \(\{\omega_i^{\vee}\}\) and the coweight \(\dim(W)^{-1} \cdot \tau^{\vee}_W\). More precisely, the integral coweight lattice \(Y(G(\psi))\) admits a natural embedding to the integral lattice \(L(\psi) = \mathbb{Z} \langle\{\omega_1^{\vee}, \ldots, \omega_n^{\vee}, \dim(W)^{-1} \cdot \tau^{\vee}_W\}\rangle \subset Y(\GL(W))_{\mathbb{Q}}\).

  In other words, we can compute our coweight lattice \(Y(G(\psi))\) as the intersection of two lattices \(L(\psi) \cap Y(\GL(W))\) inside \(Y(\GL(W))_{\mathbb{Q}}\).
\end{remark}

\begin{example}\label{example:series-B}
  \(G = \SO_{2 n + 1}\) is of adjoint type, so \(Y(G(\psi)) \simeq Y(G) \times \End(\mathbb{G}_m)\).
\end{example}

\begin{remark}\label{remark:cartan-matrix}
  The Cartan matrix \(C\) of the root system \(\Phi\) (with respect to its basis~\(\Delta\)) provides a change of coordinates from the fundamental weights \(\{\omega_i\}\) to simple roots \(\{\alpha_i\}\). Dually, the transposed inverse Cartan matrix \(C^{-T}\) provides a change of coordinates from the simple coroots \(\{\alpha_i^{\vee}\}\) to fundamental coweights \(\{\omega_i^{\vee}\}\).
\end{remark}

\begin{computation}\label{computation:lattice-calculations}
  Remarks~\ref{remark:coweight-lattice-description} and~\ref{remark:cartan-matrix} provide us with a natural way to explicitly compute the coweight lattice \(Y(G(\psi))\). Namely, let us identify the coweight lattice \(Y(\GL(W))\) with the standard lattice \(\mathbb{Z}^m\). Recall that the images of simple coroots \(\psi(\alpha^{\vee}_i) \in Y(\GL(W))\) are well-described (for example, see~\cite[\nopp VIII.13]{bourbaki/lie}). Then we can compute the lattice \(Y(G(\psi))\) as the intersection \(L(\psi) \cap \mathbb{Z}^m\) inside \(\mathbb{Q}^m\).

  Note that this computation actually depends only on the root system \(\Phi\). More precisely, we are actually interested only with maximal tori of the linear groups \(G\) and \(G(\psi)\), so in order to perform explicit computations we can work with the Cartan subalgebra \(\mathfrak{h} \subset \Lie(G)\) of the associated Lie algebra.
\end{computation}

\begin{example}\label{example:weighted-quadrics}
  Despite that Computation~\ref{computation:lattice-calculations} allows us to reduce the description of possible \(\mathbb{Z}\)-gradings to direct calculations, we still have to construct an integral basis of \(L(\psi) \cap \mathbb{Z}^m\). In each particular case this can be done by applying Zassenhaus algorithm (see~\cite[207--210]{fischer/lineare}), yet this leads to cumbersome calculations even in simple situations. For example, let \(\psi \colon G \hookrightarrow \GL(V)\) be the standard representation of \(G = \SO(10)\). The associated generalised Grassmannian \(\widetilde{Y}\) is given by
  \[
    Q = X_0 X_5 + X_1 X_6 + X_2 X_7 + X_3 X_8 + X_4 X_9 = 0.
  \]
  By applying Computation~\ref{computation:lattice-calculations} it is possible to check that the coweight lattice \(Y(G(\psi))\) is generated by the following elements (written in the standard basis):  
  \begin{gather*}
    \omega_1^{\vee} = (1, 0, 0, 0, 0, -1, 0, 0, 0, 0), \quad
    \omega_2^{\vee} - \omega_1^{\vee} = (0, 1, 0, 0, 0, 0, -1, 0, 0, 0), \\
    \omega_3^{\vee} - \omega_2^{\vee} = (0, 0, 1, 0, 0, 0, 0, -1, 0, 0), \quad
    \omega_4^{\vee} + \omega_5^{\vee} - \omega_3^{\vee} = (0, 0, 0, 1, 0, 0, 0, 0, -1, 0), \\
    \omega_5^{\vee} - \omega_4^{\vee} = (0, 0, 0, 0, 1, 0, 0, 0, 0, -1), \quad
    \dfrac{1}{2} \tau^{\vee}_V - \omega_5^{\vee} = (0, 0, 0, 0, 0, 1, 1, 1, 1, 1),
  \end{gather*}
  where we have \(\langle \alpha_i,  \omega_j^{\vee} \rangle = \delta_{ij}\). In other words, the \(\mathbb{Z}\)-gradings are parametrised by
  \[
    \deg(X_i) = a_i, \quad \deg(X_{5 + i}) = - a_i + a_5, \quad i = 0, \ldots, 4; \quad \deg(Q) = a_5.
  \]
  Note that if \(a_5\) is odd, then such a \(\mathbb{Z}\)-grading is not an integral coweight of \(G \times \mathbb{G}_m\).

  There also exist other ways for computing an integral basis of the coweight lattice, at least for non-exceptional simple groups (see Proposition~\ref{proposition:SL-grading} and Question~\ref{question:grading}).
\end{example}

\subsection{A canonical integral basis of \texorpdfstring{\(Y(G(\psi))\)}{Y(G(psi))}, and absence thereof}

We also have to provide a ``canonical'' integral basis for the coweight lattice \(Y(G(\psi))\). It is easy to see from the discussion in Subsection~\ref{subsection:lattice-computations} that the lattice \(Y(G(\psi))\) is equipped with the action of the Weyl group \(\mathcal{W}\) of the root system \(\Phi\).
One can see (see the proof of~\cite[Lemma~3.3]{corti/weighted}) that this action preserves a weighted generalised Grassmannian up to isomorphism.
In other words, in its construction we can always replace a coweight \(\gamma^{\vee}\) with a coweight of the form \(w \cdot \gamma^{\vee}\) for any \(w \in \mathcal{W}\).

In Section~\ref{section:series-A-grading} we will see that in the case of Example~\ref{example:WPS-lattice} (i.e., for weighted projective spaces) this action can be identified with a natural permutation action on the \(\mathbb{Z}\)-grading of a weighted projective space. So an obvious thought is to consider the lattice \(Y(G(\psi))\) as an integral representation of the Weyl group, and try to find a permutation basis. Unfortunately, this is not the case for arbitrary weighted Grassmannians, see Remark~\ref{remark:weyl-action}. Note that for series B the structure of an integral representation is quite simple (see Example~\ref{example:series-B}). In general we can try to find a ``sufficiently nice'' integral basis, which is done in Section~\ref{section:series-A-grading} for weighted Grassmannians. It would be interesting to complete this description for other irreducible root systems.

\subsection{Well-formedness}

Similarly to the case of weighted projective spaces, we often have to restrict ourselves to \emph{well-formed} weighted Grassmannians (see~\cite{przyjalkowski/weighted} for relevant definitions and pathological examples related to weighted projective spaces).

\begin{definition}
  A weighted generalised Grassmannian \(Y \subset \mathrm{w} \mathbb{P}\) is \emph{well-formed} if \(\mathrm{w} \mathbb{P}\) is a well-formed weighted projective space, and \(\codim_Y(Y \cap \Sing(\mathrm{w} \mathbb{P})) \geqslant 2\).
\end{definition}

\begin{remark}
  It is expected (see~\cite[Problem~3.4]{corti/weighted}) that \(Y \subset \mathrm{w} \mathbb{P}\) is well-formed if and only if \(\mathrm{w} \mathbb{P}\) is a well-formed weighted projective space.
\end{remark}

\begin{lemma}\label{lemma:singular-locus}
  If \(Y \subset \mathrm{w} \mathbb{P}\) is a well-formed weighted generalised Grassmannian, then we have \(\Sing(Y) = Y \cap \Sing(\mathrm{w} \mathbb{P})\).
\end{lemma}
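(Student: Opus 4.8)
The plan is to use the realisation of $Y \subset \mathrm{w}\mathbb{P}$ as a finite abelian quotient (Remark~\ref{remark:quotient}) together with the Chevalley--Shephard--Todd theorem. Write $q \colon \mathbb{P} \twoheadrightarrow \mathrm{w}\mathbb{P}$ for the finite morphism of that remark, so that $\mathrm{w}\mathbb{P} = \mathbb{P}/A$ for a finite abelian group $A$ acting on $\mathbb{P}$ by roots of unity on the Pl\"{u}cker coordinates, and let $\widetilde{Y} = G/P \hookrightarrow \mathbb{P}$; then $\widetilde{Y}$ is smooth and $A$-invariant, $q(\widetilde{Y}) = Y$, and $q^{-1}(Y) = A \cdot \widetilde{Y} = \widetilde{Y}$. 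The first step is the standard dictionary (see~\cite{przyjalkowski/weighted}) that well-formedness of $\mathrm{w}\mathbb{P}$ means that no nontrivial $h \in A$ fixes a prime divisor of $\mathbb{P}$ pointwise. Since in characteristic zero the fixed locus $\mathbb{P}^h$ is smooth with $T_x(\mathbb{P}^h) = (T_x\mathbb{P})^h$, this says exactly that no $h$ acts as a pseudo-reflection on any $T_x\mathbb{P}$; hence for every $x$ with $\Stab_A(x) \neq 1$ the group $\Stab_A(x)$ acts on $T_x\mathbb{P}$ without pseudo-reflections, so $T_x\mathbb{P}/\Stab_A(x)$ is singular by Chevalley--Shephard--Todd. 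Combined with the fact that $q$ is \'{e}tale wherever $A$ acts freely, this yields $\Sing(\mathrm{w}\mathbb{P}) = q(\{x \in \mathbb{P} : \Stab_A(x) \neq 1\})$, so that $q$ is \'{e}tale precisely over $\mathrm{w}\mathbb{P} \setminus \Sing(\mathrm{w}\mathbb{P})$ and $q^{-1}(\Sing(\mathrm{w}\mathbb{P})) = \{x : \Stab_A(x) \neq 1\}$.

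The inclusion $\Sing(Y) \subseteq Y \cap \Sing(\mathrm{w}\mathbb{P})$ is then immediate and does not use well-formedness of $Y$ itself: over $Y \setminus \Sing(\mathrm{w}\mathbb{P})$ the morphism $q$ restricts to an \'{e}tale morphism from an open subset of the smooth variety $\widetilde{Y}$, so $Y$ is smooth there.

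For the reverse inclusion, take $\bar{y} = q(y) \in Y \cap \Sing(\mathrm{w}\mathbb{P})$ with $y \in \widetilde{Y}$, and put $H = \Stab_A(y)$, so $H \neq 1$. By the Luna slice theorem for the finite group $A$ acting on the smooth variety $\widetilde{Y}$ (take an $H$-invariant affine open $V \ni y$ meeting the orbit $Ay$ only at $y$, so $V/H \to Y$ is \'{e}tale onto a neighbourhood of $\bar{y}$) together with linearisation of the $H$-action on $V$ at $y$, the variety $Y$ is \'{e}tale-locally at $\bar{y}$ isomorphic to $T_y\widetilde{Y}/H$ near the image of the origin. By Chevalley--Shephard--Todd it thus suffices to show that $H$ does not act on $T_y\widetilde{Y}$ as a group generated by pseudo-reflections, and since $H \neq 1$ it is enough to exclude a single pseudo-reflection. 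Suppose $1 \neq h \in H$ is a pseudo-reflection on $T_y\widetilde{Y}$. Then the smooth fixed locus $\widetilde{Y}^h$ has $T_y(\widetilde{Y}^h) = (T_y\widetilde{Y})^h$ of codimension one, so its irreducible component $D$ through $y$ is a prime divisor of $\widetilde{Y}$. Every point of $D$ has $h \neq 1$ in its $A$-stabiliser, hence lies in $q^{-1}(\Sing(\mathrm{w}\mathbb{P}))$ by the first step; as $D \subseteq \widetilde{Y}$ and $q$ is finite, $q(D) \subseteq Y \cap \Sing(\mathrm{w}\mathbb{P})$ is irreducible of dimension $\dim Y - 1$, i.e.\ $\codim_Y q(D) = 1$, contradicting the well-formedness of $Y$. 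Therefore $H$ contains no pseudo-reflection on $T_y\widetilde{Y}$, so $T_y\widetilde{Y}/H$ is not smooth; its singular locus is closed, non-empty and $\mathbb{G}_m$-invariant for the scaling action, hence contains the image of the origin, so $\bar{y} \in \Sing(Y)$.

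The main obstacle, and the only place where the hypothesis $\codim_Y(Y \cap \Sing(\mathrm{w}\mathbb{P})) \geqslant 2$ is genuinely used, is this last step: converting ``$\Stab_A(y)$ acts on $T_y\widetilde{Y}$ with a pseudo-reflection'' into a divisorial component of $Y \cap \Sing(\mathrm{w}\mathbb{P})$. One must also be careful with the local model for $Y$ near $\bar{y}$ (slice theorem and linearisation) and with the well-formedness / pseudo-reflection dictionary for $\mathrm{w}\mathbb{P}$, but these are routine.
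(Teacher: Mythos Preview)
Your argument is correct, but the route differs from the paper's. The paper simply observes that \(Y \subset \mathrm{w}\mathbb{P}\) is \emph{quasi-smooth} (its affine cone \(\widehat{Y}\) is the cone over the smooth variety \(G/P\), hence smooth away from the vertex) and then invokes the proof of~\cite[Proposition~8]{dimca/singularities}, as adapted in~\cite[Remark~2.5]{przyjalkowski/on-automorphisms}, which says precisely that a well-formed quasi-smooth subvariety of a weighted projective space satisfies \(\Sing(Y) = Y \cap \Sing(\mathrm{w}\mathbb{P})\). So the paper's proof is a two-line citation.

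What you do instead is unpack that citation in the specific setting at hand: you use the finite-quotient description \(Y = \widetilde{Y}/A\) from Remark~\ref{remark:quotient}, linearise the stabiliser action via a Luna-type slice, and apply Chevalley--Shephard--Todd, converting a hypothetical pseudo-reflection on \(T_y\widetilde{Y}\) into a divisorial component of \(Y \cap \Sing(\mathrm{w}\mathbb{P})\) to contradict well-formedness. This is essentially the content of Dimca's argument rewritten for the quotient picture rather than the cone picture. Your approach is longer but entirely self-contained and makes transparent where each hypothesis is used (well-formedness of \(\mathrm{w}\mathbb{P}\) for one inclusion, the codimension-two condition for the other); the paper's approach is terse and defers the mechanism to the literature. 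Both are fine; yours would be preferable in an expository text, the paper's in a research article that can lean on standard references.
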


\begin{proof}
  We can apply the proof of~\cite[Proposition~8]{dimca/singularities} by~\cite[Remark~2.5]{przyjalkowski/on-automorphisms}. Note that the subvariety \(Y \subset \mathrm{w} \mathbb{P}\) is always quasi-smooth, because any generalised Grassmannian is smooth in the usual sense.
\end{proof}

\subsection{Serre twisting sheaf and Hilbert series}

\begin{definition}[see~{\cite[Definition~2.1]{sano/hypersurfaces}}]
  Let \(Y \subset \mathrm{w} \mathbb{P}\) be a weighted generalised Grassmannian (not necessary well-formed). We denote their coordinate rings by \(\mathcal{R}(\mathrm{w} \mathbb{P})\) and \(\mathcal{R}(Y) = \mathcal{R}(\mathrm{w} \mathbb{P}) / I(Y)\), correspondingly, where \(I(Y)\) is the ideal of (generalised) Pl\"{u}cker relations. We define the sheaf \(\mathcal{O}_Y(k)\) as the coherent sheaf on \(Y\) associated to the graded \(\mathcal{R}(Y)\)-module \(R(k)\) whose degree \(i\) part is \(R(k)_i = \mathcal{R}(Y)_{k + i}\).
\end{definition}

\begin{lemma}[see~{\cite[Proposition~2.3(1)]{sano/hypersurfaces}}]\label{lemma:hilbert-series}
  Let \(Y \subset \mathrm{w} \mathbb{P}\) be a weighted generalised Grassmannian. Then we have \(H^0(Y, \mathcal{O}_Y(k)) \simeq \mathcal{R}(Y)_k\) for any \(k \geqslant 0\).
\end{lemma}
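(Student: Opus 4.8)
The plan is to deduce the statement from the standard comparison between the homogeneous coordinate ring and the global sections of the twisting sheaves on \(\Proj\); the only substantial input will be a depth estimate. Write \(R = \mathcal{R}(Y)\) and let \(\mathfrak{m} = R_{+} = \bigoplus_{k > 0} R_k\) be the irrelevant ideal, so that \(Y = \Proj(R)\) and, by the very definition of \(\mathcal{O}_Y(k)\) as \(\widetilde{R(k)}\), the graded module \(\bigoplus_{k \in \mathbb{Z}} H^0(Y, \mathcal{O}_Y(k))\) is exactly \(\Gamma_{*}(\mathcal{O}_Y)\). There is a canonical exact sequence of graded \(R\)-modules
\[
  0 \to H^0_{\mathfrak{m}}(R) \to R \to \bigoplus_{k \in \mathbb{Z}} H^0(Y, \mathcal{O}_Y(k)) \to H^1_{\mathfrak{m}}(R) \to 0,
\]
so it suffices to prove \(H^0_{\mathfrak{m}}(R) = H^1_{\mathfrak{m}}(R) = 0\), i.e.\ \(\depth_{\mathfrak{m}} R \geqslant 2\); this yields \(R_k \cong H^0(Y, \mathcal{O}_Y(k))\) in every degree \(k\), in particular for \(k \geqslant 0\).

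First I would observe that \(R = \mathcal{R}(Y)\) is, as an ungraded ring, precisely the homogeneous coordinate ring of the usual generalised Grassmannian \(\widetilde{Y} = G / P\) in its minimal (Pl\"{u}cker) embedding: by construction \(R = \Bbbk[\{T_I\}] / I(Y)\) with \(I(Y)\) generated by the (generalised) Pl\"{u}cker relations, and passing to the non-standard \(\mathbb{Z}_{+}\)-grading changes only the degrees of the \(T_I\), not the ring itself. Moreover, since every \(T_I\) has strictly positive degree for both the standard and the weighted grading, the irrelevant ideal \(\mathfrak{m}\) is in both cases the ideal generated by all \(T_I\) — the maximal graded ideal cutting out the vertex of the affine cone \(\widehat{Y}\) — so the depth in question does not depend on the grading.

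Next, the cone over a generalised Grassmannian in its minimal embedding is arithmetically Cohen--Macaulay: for ordinary Grassmannians this is the classical straightening-law computation, and for arbitrary \(G / P\) it follows from standard monomial theory (see~\cite{lakshmibai/monomial}). Hence \(\depth_{\mathfrak{m}} R = \dim R = \dim \widehat{Y} = \dim Y + 1\), and since \(G / P\) is a positive-dimensional projective variety (as \(P\) is a proper parabolic subgroup) we conclude \(\depth_{\mathfrak{m}} R \geqslant 2\). Feeding this into the exact sequence above completes the argument.

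The main obstacle here is bookkeeping rather than mathematics: one must check carefully that replacing the standard grading by the weighted one affects neither \(\Proj(R)\), nor the sheaves \(\mathcal{O}_Y(k)\), nor the irrelevant ideal \(\mathfrak{m}\), so that the classical Cohen--Macaulayness of the Pl\"{u}cker cone can be imported verbatim. Alternatively, one may simply invoke~\cite[Proposition~2.3(1)]{sano/hypersurfaces}, whose proof is exactly this local-cohomology argument applied to \(R = \mathcal{R}(Y)\).
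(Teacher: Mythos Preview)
Your proposal is correct; the paper itself gives no proof of this lemma, merely citing \cite[Proposition~2.3(1)]{sano/hypersurfaces}, and your local-cohomology argument (reducing to \(\depth_{\mathfrak{m}} R \geqslant 2\) via arithmetic Cohen--Macaulayness of the Pl\"{u}cker cone) is precisely the content of that reference, as you note yourself. The observation that the weighted grading leaves the underlying ring and the irrelevant ideal unchanged is exactly the point the paper exploits elsewhere (see the proof of Lemma~\ref{lemma:serre-sheaf-extension}), so nothing is missing.
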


\begin{remark}
  If a weighted generalised Grassmannian \(Y \subset \mathrm{w} \mathbb{P}\) is not well-formed, the sheaf \(\mathcal{O}_Y(k)\) may display pathological behaviour (cf.~\cite[Remark~2.5]{sano/hypersurfaces}).
\end{remark}

\begin{definition}\label{definition:hilbert-series}
  Let \(Y \subset \mathrm{w} \mathbb{P}\) be a weighted generalised Grassmannian (not necessary well-formed). We define its \emph{Hilbert series} as a formal series \(\mathcal{H}(Y) = \sum_{k = 0}^{\infty} d_k t^k\), where we put \(d_k = \dim(H^0(Y, \mathcal{O}_Y(k))) = \dim(\mathcal{R}(Y)_k)\) by virtue of Lemma~\ref{lemma:hilbert-series}.
\end{definition}

\begin{lemma}\label{lemma:serre-sheaf-extension}
  Let \(Y \subset \mathrm{w} \mathbb{P}\) be a well-formed weighted generalised Grassmannian. Put \(Y^{\circ} = Y \setminus \Sing(\mathrm{w} \mathbb{P})\), and let \(i \colon Y \hookrightarrow \mathrm{w} \mathbb{P}\) and \(j \colon Y^{\circ} \hookrightarrow Y\) be natural embeddings. Then we have the following isomorphisms of sheaves on \(Y\) for any \(k\):
  \[
    \mathcal{O}_Y(k) \simeq j_* j^* \mathcal{O}_Y(k) \simeq j_* j^* i^* \mathcal{O}_{\mathrm{w} \mathbb{P}}(k).
  \]
\end{lemma}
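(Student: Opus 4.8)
The plan is to reduce everything to a local statement about depth on the affine cone, exactly as in the weighted projective space case, using that $Y$ is quasi-smooth and well-formed. First I would note that the second isomorphism $j_* j^* \mathcal{O}_Y(k) \simeq j_* j^* i^* \mathcal{O}_{\mathrm{w}\mathbb{P}}(k)$ is essentially a restatement of the fact that $\mathcal{O}_Y(k) \simeq i^* \mathcal{O}_{\mathrm{w}\mathbb{P}}(k)$ away from $\Sing(\mathrm{w}\mathbb{P})$: on $Y^\circ$ the ambient space $\mathrm{w}\mathbb{P}$ is smooth, hence the twisting sheaves $\mathcal{O}_{\mathrm{w}\mathbb{P}}(k)$ are invertible there, and pulling back along $j$ commutes with $i^*$ since both are restrictions to open sets. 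So the heart of the matter is the first isomorphism, the claim that the natural map $\mathcal{O}_Y(k) \to j_* j^* \mathcal{O}_Y(k)$ is an isomorphism; equivalently, that $\mathcal{O}_Y(k)$ has no sections or higher cohomology supported on the closed set $Z = Y \cap \Sing(\mathrm{w}\mathbb{P})$, which by well-formedness has codimension $\geqslant 2$ in $Y$.

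Next I would pass to the affine cone $\widehat{Y} \subset \mathbb{A}(W)$, which is the spectrum of $\mathcal{R}(Y)$. By Remark~\ref{remark:quotient}, $\widehat{Y}$ is a quotient of the affine cone over the ordinary generalised Grassmannian $G/P$ by a finite abelian group, and the latter cone is Cohen–Macaulay (indeed has rational, even rational Gorenstein, singularities); a finite quotient of a Cohen–Macaulay variety in characteristic zero is Cohen–Macaulay, so $\widehat{Y}$ is Cohen–Macaulay. Since $\dim \widehat{Y} = \dim Y + 1 \geqslant 3$ (the cone point contributes one dimension and $Z$ lifts to a cone of dimension $\leqslant \dim Y - 1$, plus the vertex), the closed subset $\widehat{Z} \subset \widehat{Y}$ — the preimage of $Z$ together with the vertex — has codimension $\geqslant 2$ in the Cohen–Macaulay scheme $\widehat{Y}$, hence $\depth_{\widehat{Z}} \mathcal{O}_{\widehat{Y}} \geqslant 2$. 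Then the standard local-cohomology criterion (this is the content of the argument in~\cite[Proposition~8]{dimca/singularities}, whose applicability here is guaranteed by~\cite[Remark~2.5]{przyjalkowski/on-automorphisms}, just as invoked in the proof of Lemma~\ref{lemma:singular-locus}) gives that $H^0_{\widehat{Z}}(\widehat{Y}, \mathcal{O}) = H^1_{\widehat{Z}}(\widehat{Y}, \mathcal{O}) = 0$, so restriction $\mathcal{R}(Y) \to \Gamma(\widehat{Y} \setminus \widehat{Z}, \mathcal{O})$ is an isomorphism, and taking graded pieces and sheafifying on $Y = \Proj \mathcal{R}(Y)$ yields $\mathcal{O}_Y(k) \xrightarrow{\sim} j_* j^* \mathcal{O}_Y(k)$ for every $k$.

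The main obstacle, I expect, is not any single hard step but making sure the depth bound is genuinely available: one needs that $\widehat{Y}$ is Cohen–Macaulay (or at least $S_2$) and that the bad locus, including the vertex, has codimension at least two. Both follow from well-formedness ($\codim_Y Z \geqslant 2$, which forces $\codim_{\widehat Y}\widehat Z \geqslant 2$ once one adds back the vertex) together with the Cohen–Macaulayness of cones over generalised Grassmannians and its stability under finite quotients — all of which are known, so the lemma is really a bookkeeping exercise in transporting the weighted-projective-space proof to this setting via the quotient description of Remark~\ref{remark:quotient}. I would write it up by first stating the depth inequality as a displayed claim, then citing \cite[Proposition~8]{dimca/singularities} and \cite[Remark~2.5]{przyjalkowski/on-automorphisms} for the passage from depth to the vanishing of the two local cohomology groups, and finally sheafifying.
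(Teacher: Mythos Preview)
Your strategy is essentially the paper's: reduce to a depth/Cohen--Macaulay statement on the affine cone and conclude via the standard $S_2$-extension argument. The paper phrases this as ``repeat the proof of \cite[Proposition~2.3]{sano/hypersurfaces}'', noting that the homogeneous coordinate ring \(R = \mathcal{R}(Y)\) is Cohen--Macaulay (so \(\depth_{\mathfrak m}(R) = \height(\mathfrak m) \geqslant 2\)) and that Sano's part~(iii) uses only that \(\Spec(R)\) is Cohen--Macaulay.

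One point to correct: the affine cone \(\widehat{Y} = \Spec(\mathcal{R}(Y))\) is \emph{not} a finite quotient of the cone over \(G/P\); it \emph{is} that cone. The ring \(\mathcal{R}(Y)\) is literally the Pl\"ucker coordinate ring of the ordinary Grassmannian, only with a different \(\mathbb{Z}\)-grading; regrading does not change \(\Spec\). The finite abelian quotient in Remark~\ref{remark:quotient} occurs at the level of \(\Proj\) (i.e.\ \(\mathbb{P} \twoheadrightarrow \mathrm{w}\mathbb{P}\)), not on the cones. So your appeal to ``finite quotient of CM is CM in characteristic zero'' is an unnecessary detour --- the Cohen--Macaulayness you need is immediate from the arithmetic Cohen--Macaulayness of \(G/P\) in its Pl\"ucker embedding (e.g.\ \cite[Theorem~4.6.0.2]{lakshmibai/monomial}), exactly as the paper cites. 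Relatedly, the references \cite[Proposition~8]{dimca/singularities} and \cite[Remark~2.5]{przyjalkowski/on-automorphisms} are what the paper invokes for Lemma~\ref{lemma:singular-locus} (identifying the singular locus), not for the sheaf-extension statement; for the latter the natural precedent is \cite[Proposition~2.3]{sano/hypersurfaces}. None of this is a genuine gap --- once you replace ``quotient of the cone'' by ``the same cone with a new grading'', your argument and the paper's coincide.
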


\begin{proof}
  We can repeat the proof of~\cite[Proposition~2.3]{sano/hypersurfaces}. Actually, recall that a Grassmannian in its Pl\"{u}cker embedding is always \emph{arithmetically Cohen--Macaulay}, i.e., its affine cone is Cohen--Macaulay at its every point, including the vertex (for example, see~\cite[Theorem~4.6.0.2]{lakshmibai/monomial}). If we denote by \(\mathfrak{m} \subset R\) the irrelevant ideal of the homogeneous coordinate ring \(R\) of a Grassmannian, then the assumption \(\depth_{\mathfrak{m}}(R) \geqslant 2\) of~\cite[Proposition~2.3(ii)]{sano/hypersurfaces} is equivalent to \(\height(\mathfrak{m}) \geqslant 2\) (see~\cite[Proposition~1.5.15, Corollary~2.1.4]{bruns/cohenmacaulay}). Finally, the proof of~\cite[Proposition~2.3(iii)]{sano/hypersurfaces} uses only that the affine cone of \(Y \subset \mathrm{w} \mathbb{P}\), i.e., \(\Spec(R)\), is Cohen--Macaulay.
\end{proof}

\section{Integral gradings for weighted Grassmannians}\label{section:series-A-grading}

\subsection{Results and examples}

We put \(G = \SL(V)\) and \(n + 1 = \dim(V) = \rk(G) + 1\). In this section we construct an integral basis for the coweight lattice of the group \(G(\psi) = \psi(G) \cdot Z(\GL(W))\), where \(\psi \colon G \hookrightarrow \GL(W)\) is a fundamental representation.

\begin{proposition}[see~{\cite[\nopp VIII.13.1]{bourbaki/lie}}]
  A fundamental representation of \(\SL(V)\) is isomorphic to an exterior power of the standard representation \(\SL(V) \hookrightarrow \GL(V)\).
\end{proposition}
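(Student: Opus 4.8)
The plan is to prove the proposition by the standard highest-weight dictionary: I will show that each exterior power \(\Lambda^k V\), \(1 \leqslant k \leqslant n\), is an irreducible \(\SL(V)\)-module whose highest weight is the \(k\)-th fundamental weight \(\omega_k\), and conversely that the fundamental representation \(\psi\) attached to a maximal parabolic \(P \subset G\) has highest weight some \(\omega_k\); uniqueness of the irreducible representation with a given highest weight then identifies \(W\) with \(\Lambda^k V\). Concretely, I fix a maximal torus \(T_G \subset \SL(V)\) acting diagonally on a basis \(e_0, \ldots, e_n\) of \(V\) with characters \(\varepsilon_0, \ldots, \varepsilon_n\) (so \(\sum_i \varepsilon_i = 0\)) and the Borel of upper-triangular matrices, so that \(\Delta = \{\alpha_1, \ldots, \alpha_n\}\) with \(\alpha_j = \varepsilon_{j - 1} - \varepsilon_j\) and \(\langle \varepsilon_a, \alpha_j^{\vee} \rangle = \delta_{a, j - 1} - \delta_{a, j}\). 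A maximal parabolic corresponds to deleting one node, say the \(k\)-th; the associated generalised Grassmannian \(G / P\) is then the ordinary Grassmannian \(\Gr_k(V)\) of \(k\)-dimensional subspaces, and its minimal \(G\)-equivariant embedding is the Pl\"{u}cker embedding \(\Gr_k(V) \hookrightarrow \mathbb{P}(\Lambda^k V)\), so that \(W = \Lambda^k V\) once irreducibility is established.

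First I would write down the weight decomposition of \(\Lambda^k V\) under \(T_G\): the vectors \(e_{i_1} \wedge \cdots \wedge e_{i_k}\) with \(0 \leqslant i_1 < \cdots < i_k \leqslant n\) form a basis of eigenvectors, with character \(\varepsilon_{i_1} + \cdots + \varepsilon_{i_k}\). In particular every weight space is one-dimensional, and the set of weights is a single orbit of the Weyl group \(\mathcal{W} \simeq S_{n + 1}\), which permutes the \(\varepsilon_i\) and therefore acts transitively on \(k\)-element subsets of \(\{0, \ldots, n\}\).

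Next I would invoke the elementary criterion: a finite-dimensional representation whose weights all lie in one \(\mathcal{W}\)-orbit, each occurring with multiplicity one, is irreducible. Indeed, decompose it into irreducibles; the highest weight of each summand is dominant and lies in the common orbit, hence equals its unique dominant member \(\mu\), so the multiplicity of \(\mu\) equals the number of summands, which must therefore be one. Applied to \(\Lambda^k V\), this gives \(\Lambda^k V \simeq V(\mu)\) with \(\mu = \varepsilon_0 + \cdots + \varepsilon_{k - 1}\). Finally, pairing with simple coroots, \(\langle \mu, \alpha_j^{\vee} \rangle = \sum_{a = 0}^{k - 1} (\delta_{a, j - 1} - \delta_{a, j}) = \delta_{j k}\), so \(\mu = \omega_k\); hence the fundamental representation attached to the \(k\)-th maximal parabolic is \(\Lambda^k V\), and letting \(k\) run over \(1, \ldots, n\) exhausts all fundamental representations of \(\SL(V)\).

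I do not expect a genuine obstacle here — everything reduces to bookkeeping in the root datum of type \(A_n\) together with the orbit-multiplicity criterion for irreducibility. The single point that warrants care is the normalisation hidden in the phrase ``fundamental representation'': it must be the \emph{minimal} equivariant embedding, so that the highest weight is exactly \(\omega_k\) rather than a positive multiple, and one should keep track of the distinction between \(V(\omega_k) = \Lambda^k V\) and its dual \(V(\omega_{n + 1 - k}) \simeq \Lambda^{n + 1 - k} V\); both are handled transparently by the explicit Pl\"{u}cker description of \(\Gr_k(V) = G / P\).
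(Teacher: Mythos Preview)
Your argument is correct and is precisely the standard highest-weight proof; the paper itself gives no argument at all, merely citing \cite[\nopp VIII.13.1]{bourbaki/lie}, whose content your proposal faithfully reproduces. There is nothing to compare: you have supplied what the paper outsources.
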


\begin{notation}
  Let \(\{\omega_i\}\) and \(\{\omega_i^{\vee}\}\) be fundamental weights and coweights of \(G\), correspondingly (see Definition~\ref{definition:fundamental-coweights}). For any \(i = 0, \ldots, n\) we put \(e_i = -\omega_{i - 1} + \omega_i\) and \(e^{\vee}_i = -\omega^{\vee}_{i - 1} + \omega^{\vee}_i\), where \(\omega_0 = \omega_{n + 1} = 0\) and \(\omega^{\vee}_{0} = \omega^{\vee}_{n + 1} = 0\).
\end{notation}

\begin{remark}\label{remark:SL-coordinates}
  Let us choose coordinates \(\{T_i : i = 0, \ldots, n\}\) on the vector space \(V\). Then we can identify \(\{T_i\}\) with the orbit \(\mathcal{W} \cdot \omega_1 = \{e_i\}\) of the Weyl group of the fundamental weight \(\omega_1\) (the highest weight of the standard representation \(V\)). Direct calculations show that \(\psi(e^{\vee}_i) + \dim(V)^{-1} \cdot \tau^{\vee}_V\) can be identified with the \(i\)-th standard vector in \(V\). So \(\{\psi(e^{\vee}_i) + \dim(V)^{-1} \cdot \tau^{\vee}_V\}\) form dual coordinates on \(V^{\vee}\).

  In the same way if \(W = \Lambda^k(V)\) is the representation of the highest weight \(\omega_k\), then the coordinates \(\{T_i : i = 0, \ldots, n\}\) on \(V\) induce the coordinates \(\{T_{i_1 < \cdots < i_k}\}\) on \(W\), and we can identify them with the orbit \(\mathcal{W} \cdot \omega_k\) of the Weyl group (cf. Lemma~\ref{lemma:weyl-orbit}).
\end{remark}

In this notation we can recover the \(\mathbb{Z}_{+}\)-grading of a weighted projective space.

\begin{example}\label{example:WPS}
  A weighted projective space \(\mathbb{P}(a_0, \ldots, a_n) = \Proj(\Bbbk[T_0, \ldots, T_n])\) is defined by a coweight of the form \(\gamma^{\vee} = \psi(\mu^{\vee}) + m \cdot \tau^{\vee}_W\), where we put
\[
  \mu^{\vee} = \sum_{i = 1}^n (a_{i - 1} - a_i) \omega_i^{\vee}, \quad
  m = \frac{\sum_{i = 0}^n a_i}{n + 1}, \quad 
  \deg(T_i) = \langle e_i, \mu^{\vee} \rangle + m = a_i.
\]
\end{example}

The goal of this section is to prove the following generalisation of Example~\ref{example:WPS}.

\begin{proposition}\label{proposition:SL-grading}
  Let \(\psi \colon G \hookrightarrow \GL(W)\), \(W = \Lambda^k(V)\), be a fundamental representation. Then the coweight lattice of the group \(G(\psi) = \psi(G) \cdot Z(\GL(W))\) admits the following integral basis:
  \[
    \gamma^{\vee}_i =
    \begin{cases}
      (k / (n + 1) - 1) \cdot \tau^{\vee}_W + \psi(e_i^{\vee})  & \text{ for } 0 \leqslant i \leqslant k - 2; \\
      k / (n + 1) \cdot \tau^{\vee}_W + \psi(e_i^{\vee})  & \text{ for } k - 1 \leqslant i \leqslant n. \\
    \end{cases}
  \]
\end{proposition}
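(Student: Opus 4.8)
The plan is to compute $Y(G(\psi))$ directly from Remark~\ref{remark:coweight-lattice-description}, namely as the intersection $L(\psi)\cap Y(\GL(W))$ inside $Y(\GL(W))_{\mathbb{Q}}$, and then to recognise the listed $\gamma_i^{\vee}$ as a basis of the resulting lattice. I would work throughout in the coordinates of Remark~\ref{remark:SL-coordinates}: identify $Y(\GL(W))$ with $\mathbb{Z}^{\binom{n+1}{k}}$ by sending a coweight to the tuple $(\deg T_I)_I$ indexed by the $k$-subsets $I=(i_1<\dots<i_k)$, and for $i=0,\dots,n$ let $u_i\in\mathbb{Z}^{\binom{n+1}{k}}$ be the ``coordinate-sum'' coweight with $(u_i)_I=1$ if $i\in I$ and $0$ otherwise. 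Since $\psi$ is the $k$-th exterior power of the standard representation, Remark~\ref{remark:SL-coordinates} identifies $\psi(e_i^{\vee})+\tfrac{k}{n+1}\tau_W^{\vee}$ with $u_i$ (the $\Lambda^k$-image of the $i$-th standard coweight of $\GL(V)$), where $\tau_W^{\vee}$ is the all-ones tuple; note $\sum_{i=0}^n u_i=k\,\tau_W^{\vee}$.

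The first substantive step is to put $L(\psi)$ in these terms. Because the fundamental coweights $\omega_j^{\vee}$ and the $e_i^{\vee}$ generate the same lattice (one set consists of partial sums of the other, modulo the relation $\sum_i e_i^{\vee}=0$), we have $\mathbb{Z}\langle\psi(\omega_1^{\vee}),\dots,\psi(\omega_n^{\vee})\rangle=\mathbb{Z}\langle\psi(e_0^{\vee}),\dots,\psi(e_n^{\vee})\rangle$; combining this with the crucial integrality $\tfrac{k}{n+1}\binom{n+1}{k}=\binom{n}{k-1}\in\mathbb{Z}$, one rewrites $L(\psi)=\mathbb{Z}\langle u_0,\dots,u_n\rangle+\mathbb{Z}\cdot\binom{n+1}{k}^{-1}\tau_W^{\vee}$. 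Intersecting with $Y(\GL(W))=\mathbb{Z}^{\binom{n+1}{k}}$ then gives
\[
  Y(G(\psi))\;=\;\mathbb{Z}\langle u_0,\dots,u_n\rangle+\mathbb{Z}\,\tau_W^{\vee},
\]
since $\mathbb{Z}\langle u_i\rangle$ already lies in $\mathbb{Z}^{\binom{n+1}{k}}$, while $\binom{n+1}{k}^{-1}b\,\tau_W^{\vee}$ is integral exactly when $\binom{n+1}{k}\mid b$ (and the $\tau_W^{\vee}$-coefficient may always be reduced modulo $\binom{n+1}{k}$ by adding multiples of $\tau_W^{\vee}$). As a consistency check, $\Lambda^k\colon\GL(V)\to\GL(W)$ has image $G(\psi)$ and kernel $\mu_k\cdot\Id_V$, so $G(\psi)\simeq\GL(V)/\mu_k$ and $Y(G(\psi))=Y(\GL(V))+\mathbb{Z}\cdot\tfrac1k\tau_V^{\vee}$; pushing this through $\Lambda^k$ and using $|I|=k$ recovers exactly the displayed lattice, i.e.\ the coweights with $\deg T_I=\sum_{j\in I}\tilde a_j+s$ for $\tilde a_j,s\in\mathbb{Z}$.

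Given the display, the proposed basis is immediate. Expanding the definition, $\gamma_i^{\vee}=u_i-[\,0\le i\le k-2\,]\,\tau_W^{\vee}$, so each $\gamma_i^{\vee}$ lies in $Y(G(\psi))$ (its tuple has entries $[\,i\in I\,]-[\,i\le k-2\,]\in\{-1,0,1\}$). Since exactly $k-1$ indices satisfy $i\le k-2$ and $\sum_{i=0}^n u_i=k\,\tau_W^{\vee}$, summing gives $\sum_{i=0}^n\gamma_i^{\vee}=\tau_W^{\vee}$; hence $\tau_W^{\vee}$, and then every $u_i=\gamma_i^{\vee}+[\,i\le k-2\,]\,\tau_W^{\vee}$, lies in $\mathbb{Z}\langle\gamma_0^{\vee},\dots,\gamma_n^{\vee}\rangle$, so this sublattice equals $\mathbb{Z}\langle u_i\rangle+\mathbb{Z}\,\tau_W^{\vee}=Y(G(\psi))$. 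As $Y(G(\psi))$ has rank $n+1$ by Remark~\ref{remark:rational-splitting}, the $n+1$ elements $\gamma_i^{\vee}$ generating it are automatically $\mathbb{Z}$-linearly independent, hence a basis, which is the assertion.

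The step I expect to require the most care is the passage from the abstract $L(\psi)\cap Y(\GL(W))$ to the concrete form $Y(G(\psi))=\mathbb{Z}\langle u_i\rangle+\mathbb{Z}\,\tau_W^{\vee}$: one has to keep track of the chain of rational lattices $\mathbb{Z}\Phi^{\vee}\subset\mathbb{Z}\langle\omega_1^{\vee},\dots,\omega_n^{\vee}\rangle\subset Y(\GL(V))_{\mathbb{Q}}$, watch how $\tau_W^{\vee}$ is shared between the $\psi(\SL(V))$-part and the $\mathbb{G}_m$-part, and observe that the index-$(n+1)$ discrepancies between these lattices are reconciled precisely by the integer $\binom{n}{k-1}=\tfrac{k}{n+1}\binom{n+1}{k}$. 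After that the only thing to remember is the relation $\sum_i u_i=k\,\tau_W^{\vee}$ — which is what makes the lattice structure non-obvious — and everything else is bookkeeping.
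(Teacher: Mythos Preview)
Your argument is correct and takes a genuinely different route from the paper's. The paper proves the proposition by induction on \(l=n+1-k\): the base case \(l=1\) is a direct matrix manipulation (the inverse of the matrix from Lemma~\ref{lemma:SL-lattice description} is integral and column-equivalent to the claimed basis), and the induction step passes from \(V_l\) to \(V_{l+1}\) via a corestriction map \(\pi_{l+1}\colon Y(G_{l+1}(\psi_{l+1}))\to Y(G_l(\psi_l))\), identifying its kernel and cokernel to conclude \(M_{l+1}=Y(G_{l+1}(\psi_{l+1}))\). You instead compute \(L(\psi)\cap Y(\GL(W))\) in one shot: the key move is rewriting \(L(\psi)\) as \(\mathbb{Z}\langle u_i\rangle+\mathbb{Z}\binom{n+1}{k}^{-1}\tau_W^{\vee}\) using the integrality \(\tfrac{k}{n+1}\binom{n+1}{k}=\binom{n}{k-1}\), after which the intersection with \(\mathbb{Z}^{\binom{n+1}{k}}\) is visibly \(\mathbb{Z}\langle u_i\rangle+\mathbb{Z}\,\tau_W^{\vee}\), and the identity \(\sum_i\gamma_i^{\vee}=\tau_W^{\vee}\) finishes. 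Your ``consistency check'' via \(G(\psi)\simeq\GL(V)/\mu_k\) is in fact a self-contained alternative proof, arguably the slickest one. Your approach is more elementary and conceptual for type~A; the paper's inductive scaffolding is heavier but is set up with an eye toward the generalisation to other irreducible root systems (cf.\ Question~\ref{question:grading}), where no analogue of the \(\GL(V)/\mu_k\) description is available.
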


We postpone the proof of Proposition~\ref{proposition:SL-grading} until the end of this section. Let us first discuss its immediate consequences.

\begin{corollary}\label{corollary:SL-grading}
  Let \(\psi \colon G \hookrightarrow \GL(W)\), \(W = \Lambda^k(V)\), be a fundamental representation. We fix a basis \(V = \langle e_0, \ldots, e_n \rangle \), so we have the following coordinates on \(V\) and \(W\):
  \[
    \Sym(V^{\vee}) \simeq \Bbbk[T_0, \ldots, T_n], \quad
    \Sym(W^{\vee}) \simeq \Bbbk[\{T_{i_1 < \cdots < i_k}\}],
  \]
  An arbitrary coweight \(\sum_{i = 0}^n a_i \gamma^{\vee}_i\) of \(G(\psi)\) induces on \(W\) the following \(\mathbb{Z}\)-grading:
  \[
    \deg(T_{i_1 < \cdots < i_k}) =  -\sum_{l = 0}^{k - 2} a_l + \sum_{j = 1}^k a_{i_j}.
  \]
\end{corollary}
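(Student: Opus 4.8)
The plan is to read the formula off Proposition~\ref{proposition:SL-grading} by expanding an arbitrary coweight in the given basis and pairing it against the \(T_W\)-weights of the Pl\"{u}cker coordinates. First I would bring the general coweight into the ``\(\mu^{\vee} + m\,\tau^{\vee}_W\)'' shape of Example~\ref{example:WPS}. Collecting the \(\tau^{\vee}_W\)-components and the \(\psi(e^{\vee}_i)\)-components in \(\sum_{i = 0}^n a_i \gamma^{\vee}_i\), Proposition~\ref{proposition:SL-grading} gives
\[
  \sum_{i = 0}^n a_i \gamma^{\vee}_i = \psi(\mu^{\vee}) + m \cdot \tau^{\vee}_W, \quad
  \mu^{\vee} = \sum_{i = 0}^n a_i e^{\vee}_i, \quad
  m = \frac{k}{n + 1} \sum_{i = 0}^n a_i - \sum_{l = 0}^{k - 2} a_l,
\]
the split of the index range at \(k - 1\) in Proposition~\ref{proposition:SL-grading} being exactly what produces the correction \(-\sum_{l = 0}^{k - 2} a_l\).

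Next I would recall from Remark~\ref{remark:SL-coordinates} that, in the chosen coordinates, \(T_{i_1 < \cdots < i_k}\) corresponds to the weight \(e_{i_1} + \cdots + e_{i_k}\) (it lies in the Weyl orbit \(\mathcal{W} \cdot \omega_k\)). Hence, exactly as in Example~\ref{example:WPS}, its degree with respect to \(\psi(\mu^{\vee}) + m\,\tau^{\vee}_W\) equals \(\langle e_{i_1} + \cdots + e_{i_k}, \mu^{\vee} \rangle + m\); the summand \(m\) appears because \(\langle e_{i_1} + \cdots + e_{i_k}, \tau^{\vee}_W \rangle = 1\), the central coweight \(\tau^{\vee}_W\) pairing to \(1\) with every weight of \(W\).

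The only genuine computation is the pairing \(\langle e_i, e^{\vee}_j \rangle\). By Remark~\ref{remark:SL-coordinates} the coweight \(\psi(e^{\vee}_j) + (n + 1)^{-1} \tau^{\vee}_V\) is the \(j\)-th standard basis vector of \(V\), so it pairs with the coordinate weight \(e_i\) to \(\delta_{ij}\); since every weight of the standard representation pairs with \(\tau^{\vee}_V\) to \(1\), this yields \(\langle e_i, e^{\vee}_j \rangle = \delta_{ij} - (n + 1)^{-1}\). Substituting,
\[
  \langle e_{i_1} + \cdots + e_{i_k}, \mu^{\vee} \rangle = \sum_{j = 1}^k \langle e_{i_j}, \mu^{\vee} \rangle = \sum_{j = 1}^k a_{i_j} - \frac{k}{n + 1} \sum_{i = 0}^n a_i,
\]
and adding \(m\) cancels the term \(\tfrac{k}{n + 1} \sum_i a_i\), leaving \(\deg(T_{i_1 < \cdots < i_k}) = - \sum_{l = 0}^{k - 2} a_l + \sum_{j = 1}^k a_{i_j}\), as claimed.

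There is no real obstacle here: granted Proposition~\ref{proposition:SL-grading}, this is bookkeeping. The points needing care are the normalisations --- that the relevant central coweight is \(\tau^{\vee}_W\) and not \(\psi(\tau^{\vee}_V) = k \cdot \tau^{\vee}_W\), and the identity \(\langle e_i, e^{\vee}_j \rangle = \delta_{ij} - (n + 1)^{-1}\) --- together with the cancellation of the rational contributions \(\tfrac{k}{n + 1} \sum_i a_i\), which is precisely what makes the resulting grading integral even though \(\mu^{\vee}\) and \(m\) are individually only rational (cf.\ Remarks~\ref{remark:rational-splitting} and~\ref{remark:GL-construction}).
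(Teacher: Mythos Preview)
Your argument is correct and follows essentially the same path as the paper: decompose \(\sum_i a_i \gamma^{\vee}_i\) into a \(G\)-part plus a multiple of \(\tau^{\vee}_W\), identify the weight of \(T_{i_1<\cdots<i_k}\) with \(\sum_j e_{i_j}\) via Remark~\ref{remark:SL-coordinates}, and compute the pairing. The only difference is in how that pairing is evaluated. The paper expresses \(\mu^{\vee}\) in the fundamental-coweight basis, \(\mu^{\vee}=\sum_{i=1}^n (a_{i-1}-a_i)\omega_i^{\vee}\) (which is your \(\sum_i a_i e_i^{\vee}\) rewritten via telescoping), and then appeals to Lemma~\ref{lemma:SL-lattice description} (equivalently, to checking on standard basis vectors) to obtain the value of the pairing. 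You instead extract the closed formula \(\langle e_i, e_j^{\vee}\rangle = \delta_{ij} - (n+1)^{-1}\) directly from the standard-representation identification in Remark~\ref{remark:SL-coordinates}, which is a clean and self-contained substitute for the reference to Lemma~\ref{lemma:SL-lattice description}. Both routes yield the same cancellation of the rational term \(\tfrac{k}{n+1}\sum_i a_i\) against the corresponding piece of \(m\).
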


\begin{proof}
  Let us rewrite our coweight \(\sum_{i = 0}^n a_i \gamma^{\vee}_i\) from Proposition~\ref{proposition:SL-grading} as follows:
  \[
    \sum_{i = 0}^n a_i \gamma^{\vee}_i = \sum_{i = 1}^n (a_{i - 1} - a_i) \omega_i^{\vee} + m \cdot \tau^{\vee}_W, \quad
    m = \left ( \frac{k}{n + 1} \cdot \sum_{i = 0}^{n} a_i - \sum_{i = 0}^{k - 2} a_i \right ).
  \]
  Then Remark~\ref{remark:SL-coordinates} implies that
  \[
    \deg(T_{i_1 < \cdots < i_k}) = \left \langle \sum_{j = 1}^k e_{i_j},
    \sum_{i = 1}^n (a_{i - 1} - a_i) \omega_i^{\vee} \right \rangle + m.
  \]
  Using Lemma~\ref{lemma:SL-lattice description} below, it is not hard to compute this pairing:
  \[
    \left \langle \sum_{j = 1}^k e_{i_j},
      \sum_{i = 1}^n (a_{i - 1} - a_i) \omega_i^{\vee} \right \rangle =
    -\frac{k}{n + 1} \left ( \sum_{i = 0}^n a_i \right ) + \sum_{j = 1}^k a_{i_j}.
  \]
  Actually, this can be checked for each \((a_0, \ldots, a_n) = (0, \ldots, 0, 1, 0, \ldots, 0)\).
\end{proof}

\begin{remark}
  Note that \(\deg(T_{0, \ldots, k - 2, l}) = a_l\) for all \(l = k - 1, \ldots, n\). In particular, for \(k = 1\) this means that \(\deg(T_i) = a_i\) for all \(i = 0, \ldots, n\), as to be expected.
\end{remark}

\begin{example}
  For \(k = 2\), \(n = 4\), the Pl\"{u}cker relations are given by
  \begin{gather*}
    T_{0, 1} T_{2, 3} + T_{1, 2} T_{0, 3} - T_{0, 2} T_{1, 3} = 0, \quad
    T_{0, 1} T_{2, 4} + T_{1, 2} T_{0, 4} - T_{0, 2} T_{1, 4} = 0, \\
    T_{0, 1} T_{3, 4} + T_{1, 3} T_{0, 4} - T_{0, 3} T_{1, 4} = 0, \quad
    T_{0, 2} T_{3, 4} + T_{2, 3} T_{0, 4} - T_{0, 3} T_{2, 4} = 0, \\
    T_{1, 2} T_{3, 4} + T_{2, 3} T_{1, 4} - T_{1, 3} T_{2, 4} = 0.
  \end{gather*}
  We can clearly see that those equations are quasi-homogeneous of degrees
  \begin{gather*}
    - a_0 + a_1 + a_2 + a_3, \; -a_0 + a_1 + a_2 + a_4, \;
    - a_0 + a_1 + a_3 + a_4, \\
    - a_0 + a_2 + a_3 + a_4, \; - 2 a_0 + a_1 + a_2 + a_3 + a_4.
  \end{gather*}
\end{example}

\begin{remark}\label{remark:weyl-action}
  The Weyl group \(\mathcal{W}\) acts on the coweight lattice of \(G(\psi)\) by (inverse) permutations of \(\psi(e_i^{\vee})\), and leaves \(\tau^{\vee}_W\) in its place. We also can replace \(\{0, \ldots, k - 2\}\) with any subset of \(\{0, \ldots, n\}\) of the same size, but this will yield a different integral basis. In other words, if we realise the coweight lattice as an integral submodule of
  \[
    Y(G(\psi))_{\mathbb{Q}} \simeq \mathbb{Q} \langle\psi(e_0^{\vee}), \ldots, \psi(e_n^{\vee}), \deg(W)^{-1} \cdot \tau^{\vee}_{W}\rangle
  \]
  generated by columns of
  \begin{gather*}
    M =
    \begin{pmatrix}
      1 & -1 & 0 & \cdots & 0 & 0 & 0 \\ 
      0 & 1 & -1 & \cdots & 0 & 0 & 0 \\
      \cdots & \cdots & \cdots & \cdots & \cdots & \cdots & \cdots \\
      0 & 0 & 0 & \cdots & 1 & -1 & 0 \\
      0 & 0 & 0 & \cdots & 0 & 1 & -1 \\
      \alpha_k & \alpha_k & \alpha_k & \cdots & \beta_k & \beta_k & \beta_k
    \end{pmatrix}, \quad
    \begin{cases}
      \alpha_k = (\frac{k}{n + 1} - 1) \binom{n + 1}{k}, \\
      \beta_k = \frac{k}{n + 1} \binom{n + 1}{k} = \binom{n}{k - 1}, \\
      \det(M) = (n - k + 2) \beta_k + \\
      \quad + (k - 1) \alpha_k = \binom{n + 1}{k},
    \end{cases}
  \end{gather*}
  then \(\mathcal{W}\) acts by (inverse) permutations of its columns while preserving the last row. For example, we send \((1, 0, \ldots, 0, \alpha_k)\) to \((0, \ldots, 0, -1, \alpha_k)\), not \((0, \ldots, 0, -1, \beta_k)\), because \(\mathcal{W}\) leaves the central coweight \(\tau^{\vee}_W\) in its place. Moreover, different choices of the subset in \(\{0, \ldots, n\}\) amount to permutations of elements of the last row.

  It is easy to check that the coweight lattice is a permutation representation of the Weyl group \(\mathcal{W}\) if and only if \(\min(k, n + 1 - k) = 1\) (cf. the proof of Corollary~\ref{corollary:SL-grading}).
\end{remark}

\begin{question}
  In other words, Remark~\ref{remark:weyl-action} implies that the coweight lattice of \(G(\psi)\) provides a non-trivial \(\mathbb{Z}\)-form of the standard permutation representation of \(\mathcal{W} \simeq S_{n + 1}\). How do they (explicitly) look like for other irreducible root systems?
\end{question}

\begin{remark}
  It is not hard to check that for \(\widetilde{k} = n + 1 - k\) the following matrices are column-equivalent over \(\mathbb{Z}\) (cf. the induction basis of the proof of Proposition~\ref{proposition:SL-grading}):
  \vspace{0.6cm}
  \begin{gather*}
    \begin{pNiceMatrix}
      1 & -1 & 0 & \cdots & 0 & 0 & 0 \\ 
      0 & 1 & -1 & \cdots & 0 & 0 & 0 \\
      \cdots & \cdots & \cdots & \cdots & \cdots & \cdots & \cdots \\
      0 & 0 & 0 & \cdots & 1 & -1 & 0 \\
      0 & 0 & 0 & \cdots & 0 & 1 & -1 \\
      \alpha_k & \alpha_k & \alpha_k & \cdots & \beta_k & \beta_k & \beta_k
      \CodeAfter
      \OverBrace{1-1}{6-3}{k - 1}
      \OverBrace{1-5}{6-7}{\widetilde{k} + 1}
    \end{pNiceMatrix}
    \sim
    \begin{pNiceMatrix}
      -1 & 1 & 0 & \cdots & 0 & 0 & 0 \\ 
      0 & -1 & 1 & \cdots & 0 & 0 & 0 \\
      \cdots & \cdots & \cdots & \cdots & \cdots & \cdots & \cdots \\
      0 & 0 & 0 & \cdots & -1 & 1 & 0 \\
      0 & 0 & 0 & \cdots & 0 & -1 & 1 \\
      \beta_{\widetilde{k}} & \beta_{\widetilde{k}} & \beta_{\widetilde{k}} & \cdots &
      \alpha_{\widetilde{k}} & \alpha_{\widetilde{k}} & \alpha_{\widetilde{k}}
      \CodeAfter
      \OverBrace{1-1}{6-3}{\widetilde{k} - 1}
      \OverBrace{1-5}{6-7}{k + 1}
    \end{pNiceMatrix}.
  \end{gather*}
  Note that \(\alpha_{\widetilde{k}} + \beta_k = \alpha_k + \beta_{\widetilde{k}} = 0\). Let us identify vector spaces of the representations:
  \[
    W = \Lambda^k(V) \xrightarrow{\sim} W^{\vee} = \Lambda^{n + 1 - k} (V), \quad
    T_I \mapsto T_{I^{\circ}}, \quad
    I^{\circ} = \{0, \ldots, n\} \setminus I.
  \]
  Arguing as in the proof of Corollary~\ref{corollary:SL-grading}, we see the this new integral basis of the coweight lattice (for some \(k\)) yields the same \(\mathbb{Z}\)-gradings on \(W \xrightarrow{\sim} W^{\vee}\) as does the original integral basis of the ``dual'' coweight lattice (i.e, for \(\widetilde{k} = n + 1 - k\)). In other words, if we denote by \(\Gr_k(a_0, \ldots, a_n)\) and \(\Gr^k(a_0, \ldots, a_n)\) weighted Grassmannians with respect to these integral bases (cf. Notation~\ref{notation:weighted-grassmannian} below), then \(\Gr_k(a_0, \ldots, a_n)\) is isomorphic to \(\Gr^{\widetilde{k}}(a_0, \ldots, a_n)\) with its embedding to a weighted projective space.
\end{remark}

Recall that we have to restrict ourselves to \(\mathbb{Z}_{+}\)-gradings to define a weighted Grassmannian. In our terms the positivity can be described as follows:

\begin{corollary}[see Corollary~\ref{corollary:SL-grading}]
  Let \(\psi \colon G \hookrightarrow \GL(W)\), \(W = \Lambda^k(V)\), be a fundamental representation. The coweights of \(G(\psi)\) endowing the vector space \(W\) with a \(\mathbb{Z}_{+}\)-grading can be identified with elements of the following semi-group:
  \[
    \Gamma(n, k) =
    \left \{
      (a_0, \ldots, a_n) \in \mathbb{Z}^{n + 1} \mid \sum_{j = 1}^k a_{i_j} > \sum_{l = 0}^{k - 2} a_l \quad \forall I = \{i_1 < \cdots < i_k\}
    \right \}.
  \]
  As a consequence, any weighted Grassmannian is isomorphic to one of those defined by \(\sum_{i = 0}^n a_i \gamma^{\vee}_i\), where \((a_0, \ldots, a_n) \in \Gamma(n, k)\), and \(\{\gamma^{\vee}_i\}\) as in Proposition~\ref{proposition:SL-grading}.
\end{corollary}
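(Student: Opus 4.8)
The plan is to read the statement off directly from Proposition~\ref{proposition:SL-grading} and Corollary~\ref{corollary:SL-grading}. The former says that $\{\gamma^{\vee}_i\}_{i=0}^n$ is an integral basis of $Y(G(\psi))$, so every coweight of $G(\psi)$ is uniquely of the form $\gamma^{\vee} = \sum_{i=0}^n a_i \gamma^{\vee}_i$ with $(a_0,\ldots,a_n) \in \mathbb{Z}^{n+1}$; the latter computes the grading this coweight induces on the Pl\"{u}cker coordinates. What remains is to translate the condition ``$\gamma^{\vee}$ endows $W$ with a $\mathbb{Z}_{+}$-grading'' into the system of inequalities cutting out $\Gamma(n,k)$.

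In detail, I would first recall from Remark~\ref{remark:SL-coordinates} that the coordinates $\{T_{i_1 < \cdots < i_k}\}$ form a $T_W$-weight basis of $W^{\vee}$ (identified with the Weyl orbit $\mathcal{W} \cdot \omega_k$). Hence a coweight $\gamma^{\vee}$ of $G(\psi)$ lies in $Y_+(G(\psi))$, i.e. gives a $\mathbb{Z}_{+}$-grading on $W$, exactly when each of these basis weights pairs strictly positively with $\gamma^{\vee}$, that is, when $\deg(T_{i_1 < \cdots < i_k}) > 0$ for every $I = \{i_1 < \cdots < i_k\}$. Writing $\gamma^{\vee} = \sum_{i=0}^n a_i \gamma^{\vee}_i$ and invoking Corollary~\ref{corollary:SL-grading}, which gives $\deg(T_{i_1 < \cdots < i_k}) = -\sum_{l=0}^{k-2} a_l + \sum_{j=1}^k a_{i_j}$, this positivity condition becomes precisely $\sum_{j=1}^k a_{i_j} > \sum_{l=0}^{k-2} a_l$ for all $I$, which is the defining condition of $\Gamma(n,k)$. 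Since these are strict linear inequalities, $\Gamma(n,k)$ is closed under addition, hence a sub-semigroup of $\mathbb{Z}^{n+1}$, and the $\mathbb{Z}$-linear isomorphism $Y(G(\psi)) \xrightarrow{\sim} \mathbb{Z}^{n+1}$ carrying $\gamma^{\vee}_i$ to the $i$-th standard vector restricts to a semigroup isomorphism $Y_+(G(\psi)) \xrightarrow{\sim} \Gamma(n,k)$, which is the first assertion.

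For the ``as a consequence'' part: by definition a weighted Grassmannian is a GIT quotient $\widehat{Y} // \mathbb{G}_m$ by some coweight $\gamma^{\vee} \in Y_+(G(\psi))$, and by the isomorphism just established every such $\gamma^{\vee}$ equals $\sum_{i=0}^n a_i \gamma^{\vee}_i$ for a tuple $(a_0,\ldots,a_n) \in \Gamma(n,k)$, which is exactly the claim. I expect no genuine obstacle here: the substantive content --- identifying the integral basis $\{\gamma^{\vee}_i\}$ and computing the grading it induces --- is already carried out in Proposition~\ref{proposition:SL-grading} and Corollary~\ref{corollary:SL-grading}, which we may assume. The only points that need a moment's care are the remark that $\mathbb{Z}_{+}$-positivity of the grading on $W$ is equivalent to strict positivity of the $\gamma^{\vee}$-weights of the coordinate basis $\{T_{i_1 < \cdots < i_k}\}$, and the (routine) observation that $\Gamma(n,k)$ is a semigroup; one may additionally note, using the Weyl-invariance of the construction discussed in Section~\ref{section:construction}, that replacing $(a_0,\ldots,a_n)$ by a $\mathcal{W}$-translate yields an isomorphic variety, though this is not needed for the statement.
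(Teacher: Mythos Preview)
Your proposal is correct and matches the paper's approach: the paper gives no separate proof of this corollary, simply labelling it ``see Corollary~\ref{corollary:SL-grading}'', which is exactly your argument of reading the positivity condition $\deg(T_{i_1<\cdots<i_k})>0$ off the degree formula and the integral basis of Proposition~\ref{proposition:SL-grading}.
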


\begin{remark}
  Our description is applicable to computation of ordinary and equivariant cohomology of weighted Grassmannians in~\cites{abe/equivariant,abe/schur,brahma/cohomology,graham/weighted}.
\end{remark}

\begin{notation}\label{notation:weighted-grassmannian}
  For any \((a_0, \ldots, a_n) \in \Gamma(n, k)\) we denote by \(\Gr_k(a_0, \ldots, a_n)\) the weighted Grassmannian defined by the coweight \(\sum_{i = 0}^n a_i \gamma^{\vee}_i\) of \(G(\psi)\).
\end{notation}

\begin{remark}
  Note that for \(k = 1\) we have \(\Gr_1(a_0, \ldots, a_n) = \mathbb{P}(a_0, \ldots, a_n)\), and if \(a_0 = \cdots = a_n = 1\), then we obtain usual Grassmannians for any \(k\).
\end{remark}

\begin{remark}
  For any \(k > 0\) and \((a_0, \ldots, a_m) \in \Gamma(m, k)\) we have a chain of inclusions
  \[
    \ldots \subset \Gr_k(a_0, \ldots, a_{n - 1}) \subset \Gr_k(a_0, \ldots, a_n) \subset
    \Gr_k(a_0, \ldots, a_{n + 1}) \subset \ldots
  \]
\end{remark}

\begin{remark}
  If \(k > 1\), then in general some of the coefficients \(a_i\) can be \emph{negative}, yet this will yield a \(\mathbb{Z}_{+}\)-grading, provided that \((a_0, \ldots, a_n) \in \Gamma(n, k)\).
\end{remark}

\subsection{Proof of Proposition~\ref{proposition:SL-grading}}

At last, let us prove Proposition~\ref{proposition:SL-grading}.

\begin{lemma}[see~{\cite[Table~2]{onishchik/lie}}]\label{lemma:cartan-inverse}
  Let \(A_n\) be the Cartan matrix of type \(A_n\). Then its inverse \(A_n^{-1}\) can be described as follows:
  \[
    (A_n^{-1})_{ij} = \min(i, j) - \frac{ij}{n + 1}.
  \]
\end{lemma}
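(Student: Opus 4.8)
The plan is to verify directly that the claimed matrix \(B\), with entries \(B_{ij} = \min(i,j) - \frac{ij}{n+1}\), is a right inverse of \(A_n\); since \(A_n\) is a square matrix, this already forces \(B = A_n^{-1}\). Recall that the Cartan matrix of type \(A_n\) is tridiagonal, \((A_n)_{ij} = 2\delta_{ij} - \delta_{i,j-1} - \delta_{i,j+1}\), so for every \(i = 1, \ldots, n\) and every column index \(j\),
\[
  (A_n B)_{ij} = 2 B_{ij} - B_{i-1,j} - B_{i+1,j}.
\]
The first point I would record is that the formula \(B_{ij} = \min(i,j) - \frac{ij}{n+1}\) already takes the value \(0\) at the fictitious indices \(i = 0\) and \(i = n+1\): indeed \(\min(0,j) - 0 = 0\), and \(\min(n+1,j) - \frac{(n+1)j}{n+1} = j - j = 0\) since \(1 \leqslant j \leqslant n\). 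Hence the displayed expression for \((A_n B)_{ij}\) is valid uniformly for all \(1 \leqslant i \leqslant n\), including the edge rows \(i = 1\) and \(i = n\), with no special treatment needed.

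Next I would exploit linearity: the discrete second-difference operator \(f \mapsto 2f(i) - f(i-1) - f(i+1)\) in the variable \(i\) annihilates every affine function of \(i\), and in particular kills the term \(\frac{ij}{n+1}\) of \(B_{ij}\). Therefore \((A_n B)_{ij}\) equals the second difference in \(i\) of the function \(i \mapsto \min(i,j)\).

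Finally I would evaluate this second difference by a three-case analysis. If \(i < j\), then \(\min(i-1,j) = i-1\), \(\min(i,j) = i\), \(\min(i+1,j) = i+1\), so the second difference is \(2i - (i-1) - (i+1) = 0\). If \(i > j\), all three of \(\min(i-1,j), \min(i,j), \min(i+1,j)\) equal \(j\), again giving \(0\). If \(i = j\), then \(\min(i-1,j) = j-1\) while \(\min(i,j) = \min(i+1,j) = j\), giving \(2j - (j-1) - j = 1\). Thus \((A_n B)_{ij} = \delta_{ij}\), i.e.\ \(A_n B = I_n\), which proves the lemma. There is no substantial obstacle here: the only place requiring a moment's care is the behaviour at the edge rows \(i = 1\) and \(i = n\) of \(A_n\), and this is precisely dealt with by the observation above that \(B\) vanishes at the boundary indices \(0\) and \(n+1\), so that all rows are handled by a single formula.
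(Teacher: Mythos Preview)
Your proof is correct. The paper does not actually prove this lemma: it merely records the formula with a citation to \cite[Table~2]{onishchik/lie}. Your direct verification that \(A_n B = I_n\) via the discrete second difference of \(i \mapsto \min(i,j)\), together with the clean observation that the formula for \(B_{ij}\) already vanishes at the fictitious indices \(i = 0\) and \(i = n+1\) so that no boundary cases need separate treatment, gives a short self-contained argument where the paper simply appeals to the literature.
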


\begin{lemma}\label{lemma:SL-lattice description}
  Let \(V \simeq \langle e_0, \ldots, e_n \rangle\), and \(\psi \colon G \hookrightarrow \GL(W)\), \(W = \Lambda^k(V)\), be a fundamental representation. After the identification \(Y(\GL(V)) \simeq \mathbb{Z} \langle e_0, \ldots, e_n \rangle \) the sublattice \(L(\psi) \subset Y(\GL(W))_{\mathbb{Q}}\) defined in Computation~\ref{computation:lattice-calculations} is generated by
  \[
    w_0 = \sum_{i_1 < \cdots < i_k} \binom{n + 1}{k}^{-1} e_{i_1 < \cdots < i_k}, \quad
    w_j =
    \sum_{i_1 < \cdots < i_k}
    \left (
      - \frac{kj}{n + 1} + \vert \{ t : j > i_t \} \vert 
    \right)
    e_{i_1 < \cdots < i_k}.
  \]
\end{lemma}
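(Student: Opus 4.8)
\medskip

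The plan is to unwind the definition of the lattice \(L(\psi)\). By Remark~\ref{remark:coweight-lattice-description} together with Computation~\ref{computation:lattice-calculations}, one has
\[
  L(\psi) = \mathbb{Z}\langle \psi(\omega_1^{\vee}), \ldots, \psi(\omega_n^{\vee}),\, \dim(W)^{-1} \tau^{\vee}_W \rangle
\]
inside \(Y(\GL(W))_{\mathbb{Q}}\), which under the chosen identification is \(\mathbb{Q}\langle e_{i_1 < \cdots < i_k} \rangle\), where \(e_{i_1 < \cdots < i_k}\) denotes the standard cocharacter of \(T_W\) acting with weight \(1\) on the coordinate of \(W = \Lambda^k(V)\) indexed by \(I = \{i_1 < \cdots < i_k\}\), and \(e_0, \ldots, e_n\) the analogous cocharacters of \(T_{\GL(V)}\). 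Thus the lemma reduces to computing these \(n + 1\) generators explicitly in this basis: I would show that \(\dim(W)^{-1} \tau^{\vee}_W = w_0\) and \(\psi(\omega_j^{\vee}) = w_j\) for \(j = 1, \ldots, n\), whence \(L(\psi) = \mathbb{Z}\langle w_0, w_1, \ldots, w_n \rangle\).

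The first identity is immediate: \(\tau^{\vee}_W\) acts with weight \(1\) on every coordinate of \(W\), so \(\tau^{\vee}_W = \sum_I e_I\), and \(\dim(W) = \binom{n + 1}{k}\). For the second, the key input is the behaviour of \(\psi = \Lambda^k\) on maximal tori: since \(\Lambda^k\) of \(\mathrm{diag}(t_0, \ldots, t_n)\) is the diagonal matrix with entries \(\prod_{i \in I} t_i\), the cocharacter \(e_i\) of \(T_{\GL(V)}\) maps to \(\sum_{I \ni i} e_I\). Extending \(\mathbb{Q}\)-linearly — recall that \(\omega_j^{\vee}\) is only a rational coweight of \(T_G\), so this is where one passes to rational coweight lattices as in Remark~\ref{remark:rational-splitting} — a coweight \(\sum_i \lambda_i e_i\) is sent to \(\sum_{I = \{i_1 < \cdots < i_k\}} (\lambda_{i_1} + \cdots + \lambda_{i_k})\, e_I\).

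It then remains to write \(\omega_j^{\vee}\) in the coordinates \(e_0, \ldots, e_n\) and substitute. By Remark~\ref{remark:cartan-matrix} and Lemma~\ref{lemma:cartan-inverse}, \(\omega_j^{\vee} = \sum_{i = 1}^n (A_n^{-1})_{ij}\, \alpha_i^{\vee}\) with \(\alpha_i^{\vee} = e_{i - 1} - e_i\); this sum telescopes, and since \(\min(i, j) - ij/(n + 1)\) vanishes at \(i = 0\) and at \(i = n + 1\), the boundary terms cancel, so the coefficient of \(e_a\) in \(\omega_j^{\vee}\) equals \(1 - j/(n + 1)\) for \(a < j\) and \(-j/(n + 1)\) for \(a \geqslant j\) (equivalently, this is the unique rational solution of \(\langle \alpha_i, \omega_j^{\vee} \rangle = \delta_{ij}\) with vanishing coordinate sum, cf.\ Remark~\ref{remark:SL-coordinates}). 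Substituting into the formula of the previous paragraph, the coefficient of \(e_I\) in \(\psi(\omega_j^{\vee})\) becomes \(|\{t : i_t < j\}| - kj/(n + 1) = -kj/(n+1) + |\{t : j > i_t\}|\), which is precisely the coefficient of \(e_I\) in \(w_j\), and the lemma follows. The argument is pure bookkeeping, and I expect no genuine obstacle; the only points demanding care are the compatibility between the \(0\)-indexing of the \(e_i\) and the \(1\)-indexing of the simple (co)roots — so that the telescoping really has vanishing boundary terms — and the fact that \(\psi\) should be applied to \(\omega_j^{\vee}\) only after extending scalars to \(\mathbb{Q}\).
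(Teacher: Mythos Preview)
Your proposal is correct and follows essentially the same route as the paper: the paper's proof just invokes the inverse Cartan matrix of Lemma~\ref{lemma:cartan-inverse} to pass from simple coroots to fundamental coweights and recalls that \(\alpha_i^{\vee} \leftrightarrow e_{i-1} - e_i\) under the standard representation, leaving the rest implicit. You carry out exactly this computation, additionally spelling out the action of \(\Lambda^k\) on diagonal cocharacters and the telescoping that produces the coefficients \(1 - j/(n+1)\) and \(-j/(n+1)\); this is more detailed but not a different argument.
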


\begin{proof}
  The (transposed) inverse Cartan matrix of Lemma~\ref{lemma:cartan-inverse} provides a change of coordinates from simple coroots to fundamental coweights. We only have to recall that after the identification \(Y(\GL(V)) \simeq \mathbb{Z} \langle e_0, \ldots, e_n \rangle \) the images of simple coroots \(\{\alpha_i^{\vee}\}\) under the standard representation \(G \hookrightarrow \SL(V)\) are identified with vectors \(e_{i - 1} - e_i\) for all \(i = 1, \ldots, n\).
\end{proof}

\begin{proof}[Proof of Proposition~\ref{proposition:SL-grading}]
  Fix \(k > 0\). We are going to prove the statement by induction on \(l = n + 1 - k > 0\). To this end, let us fix the following notation.

  For any \(l > 0\) we denote by \(V_l = \langle e_0, \ldots, e_n \rangle\) the \((l + k)\)-dimensional vector space with chosen basis. We put \(G_l = \SL(V_l)\), so \(G_l \hookrightarrow \GL(V_l)\) is its standard representation. We also denote by \(\psi_l \colon G_l \hookrightarrow \GL(W_l)\), \(W_l = \Lambda^k(V_l)\), the corresponding fundamental representation of \(G_l\). As before, we introduce the product \(G_l(\psi_l) = G_l \cdot \mathbb{G}_m \subset \GL(W_l)\), where \(\tau^{\vee}_{W_l} \colon \mathbb{G}_m \hookrightarrow \GL(W_l)\), \(\tau^{\vee}_{W_l}(c) = c \cdot \Id_{W_l}\), is the central coweight of \(\GL(W_l)\). We denote by \(Y(G_l)\), \(Y(G_l(\psi_l))\), and \(Y(W_l)\), the coweight lattices of the groups \(G_l\), \(G_l(\psi_l)\), and \(\GL(W_l)\), correspondingly. At last, let
  \[
    M_l \subset Y(G_l(\psi_l))_{\mathbb{Q}} \simeq Y(G_l)_{\mathbb{Q}} \oplus \mathbb{Q} \langle\deg(W_l)^{-1} \cdot \tau^{\vee}_{W_l} \rangle \subset Y(W_l)_{\mathbb{Q}}
  \]
  be the integral submodule generated by columns of the matrix
  \begin{gather*}
    \begin{pmatrix}
      1 & -1 & 0 & \cdots & 0 & 0 & 0 \\ 
      0 & 1 & -1 & \cdots & 0 & 0 & 0 \\
      \cdots & \cdots & \cdots & \cdots & \cdots & \cdots & \cdots \\
      0 & 0 & 0 & \cdots & 1 & -1 & 0 \\
      0 & 0 & 0 & \cdots & 0 & 1 & -1 \\
      \alpha_l & \alpha_l & \alpha_l & \cdots & \beta_l & \beta_l & \beta_l
    \end{pmatrix}, \quad
    \begin{cases}
      \alpha_l = (\frac{k}{l + k} - 1) \binom{l + k}{k}, \\
      \beta_l = \frac{k}{l + k} \binom{l + k}{k} = \binom{l + k - 1}{k - 1}, \\
      (l + 1) \beta_l + (k - 1) \alpha_l = \binom{l + k}{k},
    \end{cases}
  \end{gather*}
  with respect to the basis \((\omega_1^{\vee}, \ldots, \omega_l^{\vee}, \deg(W_l)^{-1} \cdot \tau^{\vee}_{W_l})\), where \(\alpha_l\) is repeated \((k - 1)\) times. Multiplication with the matrix provided by Lemma~\ref{lemma:SL-lattice description} shows that \(M_l\) is a subgroup (of finite index) of \(Y(G_l(\psi_l))\). We denote these generators by \(g_0^l, \ldots, g_n^l\).

  To sum it up, our goal is to prove that \(M_l = Y(G_l(\psi_l))\) and apply Computation~\ref{computation:lattice-calculations}.

  \textbf{Induction basis.} If \(l = 1\), then Lemma~\ref{lemma:SL-lattice description} provides a square non-degenerate matrix whose inverse is the following matrix defined over \(\mathbb{Z}\):
  \[
    \begin{pmatrix}
      -1 & 1 & 0 & \cdots & 0 & 0 & 0 \\ 
      0 & -1 & 1 & \cdots & 0 & 0 & 0 \\
      \cdots \\
      0 & 0 & 0 & \cdots & -1 & 1 & 0 \\
      0 & 0 & 0 & \cdots & 0 & -1 & 1 \\ 
      1 & 1 & 1 & \cdots & 1 & 1 & 1 \\
    \end{pmatrix}.
  \]
  It is not hard to check that this matrix is column-equivalent over \(\mathbb{Z}\) to
  \vspace{0.6cm}
  \[
    \begin{pNiceMatrix}
      1 & -1 & 0 & \cdots & 0 & 0 & 0 \\ 
      0 & 1 & -1 & \cdots & 0 & 0 & 0 \\
      \cdots \\
      0 & 0 & 0 & \cdots & 1 & -1 & 0 \\
      0 & 0 & 0 & \cdots & 0 & 1 & -1 \\
      -1 & -1 & -1 & \cdots & -1 & n & n
      \CodeAfter
      \OverBrace{1-1}{6-5}{k - 1}
      \OverBrace{1-6}{6-7}{2}
    \end{pNiceMatrix},
    \quad n + 1 = k + l.
  \]
  We just multiply the first \((k - 1)\) columns by \((-1)\), transpose the last two columns, and subtract from each of them the sum of the first \((k - 1)\) columns. As a consequence, we obtain that \(M_1 = Y(G_1(\psi_1))\), so we are done.

  \textbf{Induction step.} Let us assume that the statement is proved for some \(l > 0\). We are going to prove it for \(l + 1\).

  To this end, let us consider the inclusion of groups \(i_l \colon G_l \hookrightarrow G_{l + 1}\) and the corresponding inclusion of coweights \(i_l \colon Y(G_l) \hookrightarrow Y(G_{l + 1})\). Both of them by construction extend to the inclusion of groups \(i_l \colon \GL(W_l) \hookrightarrow \GL(W_{l + 1})\) and the corresponding inclusion of coweights \(i_l \colon Y(W_l) \hookrightarrow Y(W_{l + 1})\). Note that the latter inclusion is splittable, where the splitting is provided by a choice of basis on each of \(V_l\), hence on \(W_l\) as well. Then we can correctly define the corestriction map \(\pi_{l + 1} \colon Y(W_{l + 1}) \rightarrow Y(W_l)\), which induces the corestriction map \(\pi_{l + 1} \colon Y(G_{l + 1}(\psi_{l + 1})) \rightarrow Y(G_l(\psi_l))\). To put it simply, \(\pi_{l + 1}\) sends the fundamental coweight \(\omega_{n + 1}^{\vee}\) to \(0\), and the central coweight \(\tau^{\vee}_{W_{l + 1}}\) of \(\GL(W_{l + 1})\) to the central coweight \(\tau^{\vee}_{W_l}\) of \(\GL(W_l)\).
  
  We obtain the following exact sequence of subgroups in the lattice \(Y(W_{l + 1})\):
  \begin{align*}
    0 \rightarrow \ker(\pi_{l + 1}) \cap Y(G_{l + 1}(\psi_{l + 1}))  \rightarrow Y(G_{l + 1}(\psi_{l + 1})) \xrightarrow{\pi_{l + 1}} \\ \xrightarrow{\pi_{l + 1}} Y(G_l(\psi_l)) \rightarrow \coker(\pi_{l + 1}) \cap Y(G_l(\psi_l)) \rightarrow 0.
  \end{align*}
  It can be easily seen that \(\pi_{l + 1}(g_i^{l + 1}) = g_i^l\) for any \(i = 0, \ldots, n\). Consequently, \(\pi_{l + 1}(M_{l + 1}) = M_l\), which equals to \(Y(G_l(\psi_l))\) by the induction hypothesis, hence the cokernel \(\coker(\pi_{l + 1}) \cap Y(G_l(\psi_l))\) is trivial. Moreover, we can easily check that
  \[
    (l + 1 + k) \cdot \omega_{l + 1}^{\vee} = k \cdot \left (\sum_{i = 0}^n g_i^{l + 1} \right )
    - (l + 1) \cdot g_{n + 1}^{l + 1} \in
    M_{l + 1} \subset Y(G_{l + 1}(\psi_{l + 1})).
  \]
  Note that \((l + 1 + k) / \gcd(l + 1, k) \cdot \omega_{l + 1}^{\vee}\) is a smallest integral multiple of \(\omega_{l + 1}^{\vee}\) (see Lemma~\ref{lemma:SL-lattice description}). Then the (one-dimensional) kernel \(\ker(\pi_{l + 1}) \cap Y(G_{l + 1}(\psi_{l + 1}))\) is generated by the element \((l + 1 + k) / \gcd(l + 1, k) \cdot \omega_{l + 1}^{\vee} \in M_{l + 1}\). We conclude that \(M_{l + 1} = Y(G_{l + 1}(\psi_{l + 1}))\), so we are done.
\end{proof}

\begin{question}\label{question:grading}
  It seems that the proof of Proposition~\ref{proposition:SL-grading} can be generalised to other root systems as well, as long as we are provided with the explicit description of the image of simple coroots under the chosen fundamental representation. Actually, the explicit inverse of the Cartan matrix for any irreducible root system can be found in~\cite[Table~2]{onishchik/lie}. Yet, our proof depends heavily on the existence of a ``nice'' basis for each \(M_l = Y(G_l(\psi_l))\). Is it possible to find a similar basis for other root systems?
\end{question}

\section{Geometric invariants of weighted Grassmannians}\label{section:series-A-invariants}

In this section we are going to describe the dualising sheaf and the Hilbert series in terms of our parametrisation of \(\mathbb{Z}\)-gradings. We continue to follow the notations of Section~\ref{section:series-A-grading}. We start with the criterion for well-formedness of a weighted Grassmannian.

\begin{lemma}\label{lemma:well-formedness}
  Let \(Y = \Gr_k(a_0, \ldots, a_n) \subset \mathrm{w} \mathbb{P}\), and \((\deg(T_{i_1 < \cdots < i_k}))\) be the values of its \(\mathbb{Z}_{+}\)-grading (taken with repetitions), where
  \[
    \deg(T_{i_1 < \cdots < i_k}) = -\sum_{l = 0}^{k - 2} a_l + \sum_{j = 1}^k a_{i_j}.
  \]
  Then \(Y\) is well-formed if and only \(\mathrm{w} \mathbb{P}\) is, i.e., if after the removing from this tuple any element the greatest common divisor of the remaining ones equals 1.
\end{lemma}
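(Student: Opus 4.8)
The plan is to reduce the statement, via the quotient description of Remark~\ref{remark:quotient}, to an elementary fact about zero loci of Pl\"{u}cker coordinates on an ordinary Grassmannian. We may assume \(1 \leqslant k \leqslant n\), since otherwise \(\Gr_k(a_0, \ldots, a_n)\) is a point and there is nothing to prove. One implication is trivial: by definition a well-formed weighted Grassmannian sits inside a well-formed \(\mathrm{w}\mathbb{P}\). So the content is the converse: assuming \(\mathrm{w}\mathbb{P}\) is well-formed, we must show \(\codim_Y(Y \cap \Sing(\mathrm{w}\mathbb{P})) \geqslant 2\). First I would recall the standard description of the singular locus of a well-formed weighted projective space (see~\cite{przyjalkowski/weighted}): \(\Sing(\mathrm{w}\mathbb{P})\) is the union, over all primes \(p\), of the coordinate subspaces \(\Pi_p = \{T_{i_1 < \cdots < i_k} = 0 : p \nmid \deg(T_{i_1 < \cdots < i_k})\}\). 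Hence it suffices to prove \(\codim_Y(Y \cap \Pi_p) \geqslant 2\) for each prime \(p\).

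Next I would pass to the Grassmannian. By Remark~\ref{remark:quotient} the finite quotient morphism \(q \colon \mathbb{P} = \mathbb{P}(W) \twoheadrightarrow \mathrm{w}\mathbb{P}\) restricts to a finite surjection \(\Gr := G/P \twoheadrightarrow Y\), and since \(q\) acts by rescaling the coordinates \(T_I\) one has \(q^{-1}(Y) = \Gr\), \(q^{-1}(\Pi_p) = L_p\), and \(q^{-1}(Y \cap \Pi_p) = \Gr \cap L_p\), where \(L_p = \{T_{i_1 < \cdots < i_k} = 0 : p \nmid \deg(T_{i_1 < \cdots < i_k})\} \subset \mathbb{P}\) is a coordinate linear subspace. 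As \(q\) is finite and \(\dim Y = \dim \Gr\), this gives \(\codim_Y(Y \cap \Pi_p) = \codim_{\Gr}(\Gr \cap L_p)\), so the problem becomes: for every prime \(p\), \(\codim_{\Gr}(\Gr \cap L_p) \geqslant 2\). Now well-formedness of \(\mathrm{w}\mathbb{P}\) says precisely that for every prime \(p\) at least two of the weights \(\deg(T_I)\) are not divisible by \(p\), i.e.\ \(\#\{I : p \nmid \deg(T_I)\} \geqslant 2\); hence \(L_p \subseteq \{T_{I_0} = T_{I_1} = 0\}\) for some distinct \(k\)-subsets \(I_0 \neq I_1\), and \(\Gr \cap L_p \subseteq D_{I_0} \cap D_{I_1}\) where \(D_I := \Gr \cap \{T_I = 0\}\).

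The last — and main — step is the geometric input. For any \(k\)-subset \(I\) the divisor \(D_I\) is the Schubert divisor \(\{V \in \Gr : V \cap \langle e_j : j \notin I\rangle \neq 0\}\), in particular an \emph{irreducible} divisor of codimension \(1\); and for \(I_0 \neq I_1\) the divisors \(D_{I_0}, D_{I_1}\) are distinct, since the torus-fixed point \([\langle e_j : j \in I_1\rangle] \in \Gr\) lies on \(D_{I_0}\) (because \(I_1 \neq I_0\) forces \(\langle e_j : j \in I_1\rangle\) to meet \(\langle e_j : j \notin I_0\rangle\)) but not on \(D_{I_1}\). Since \(D_{I_0}\) is irreducible and is not contained in \(D_{I_1}\), the intersection \(D_{I_0} \cap D_{I_1}\) is a proper closed subset of \(D_{I_0}\), so \(\codim_{\Gr}(D_{I_0} \cap D_{I_1}) \geqslant 2\), and a fortiori \(\codim_{\Gr}(\Gr \cap L_p) \geqslant 2\). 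Assembling the reductions yields \(\codim_Y(Y \cap \Sing(\mathrm{w}\mathbb{P})) \geqslant 2\), so \(Y\) is well-formed, as claimed. (This also confirms the expectation of~\cite[Problem~3.4]{corti/weighted} in type A.)

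I expect the only real obstacle to be this last step: justifying that the zero locus of a single Pl\"{u}cker coordinate on \(\Gr\) is an irreducible divisor and that distinct coordinates cut out distinct divisors. This is where the geometry of Schubert varieties enters — equivalently, one may invoke that the Pl\"{u}cker ring of \(\Gr\) is a unique factorisation domain and that the homogeneous coordinate ring of a Schubert divisor is a domain. Everything else (the description of \(\Sing(\mathrm{w}\mathbb{P})\), finiteness of \(q\) and the resulting comparison of codimensions, and the combinatorial reformulation of well-formedness) is routine.
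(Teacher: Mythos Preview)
Your proof is correct and follows essentially the same route as the paper's: both use the finite quotient description of Remark~\ref{remark:quotient} to transfer the codimension question to the ordinary Grassmannian, and both reduce to the single geometric fact that the zero locus of one Pl\"{u}cker coordinate is an irreducible (Schubert) divisor while two distinct such divisors meet in codimension at least~$2$. The only cosmetic difference is that you stratify $\Sing(\mathrm{w}\mathbb{P})$ by primes~$p$, whereas the paper stratifies by subsets with common divisor greater than~$1$; the key input is identical (the paper phrases it as ``$\{\widetilde{T}_I = 0 : I \in M\}$ defines a Schubert divisor iff $|M| = 1$'' and cites~\cite[Chapter~4]{lakshmibai/monomial}).
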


\begin{proof}
  Assume that \(\mathrm{w} \mathbb{P}\) is a well-formed weighted projective space. It is well-known (for example, see~\cite[\nopp 5.15]{ianofletcher/weighted}) that for any well-formed weighted projective space \(\mathrm{w} \mathbb{P} = \mathbb{P}(b_0, \ldots, b_N) = \Proj(R)\) its singular locus is the union of strata
  \[
    \Sing(\mathrm{w} \mathbb{P}) = \bigcup_I \{T'_i = 0 \mid i \not \in I\}, \quad
    R = \Bbbk[T'_0 ,\ldots, T'_N], \quad \deg(T'_i) = b_i.
  \]
  over all subsets \(I\) of \(\{0, \ldots, N\}\) such that \(\gcd(\{b_i \mid i \in I\}) > 1\).

  Let us present \(Y \subset \mathrm{w} \mathbb{P}\) as a quotient \(\widetilde{Y} \rightarrow Y\) of a usual Grassmannian \(\widetilde{Y} \subset \mathbb{P}\) induced by the quotient map \(\mathbb{P} \rightarrow \mathrm{w} \mathbb{P}\)  (see Remark~\ref{remark:quotient}). We denote by \(\{\widetilde{T}_I\}\) and \(\{T_I\}\) the Pl\"{u}cker coordinates on \(\mathbb{P}\) and \(\mathrm{w} \mathbb{P}\), respectively. In these terms we have to impose \(\gcd(\{\deg(a_I) \mid I \not \in M\}) = 1\) for any subset \(M \subset \{I = (i_1, \ldots, i_k)\}\) such that the equations \(\{\widetilde{T}_I = 0, \; I \in M\}\) define a Schubert divisor on the Grassmannian \(\widetilde{Y} \subset \mathbb{P}\). But this can happen if and only if \(\vert M \vert = 1\) (for example, see~\cite[Chapter~4]{lakshmibai/monomial}). In other words, we have to impose \(\gcd(\{\deg(a_I) \mid I \not \in M\}) = 1\) whenever \(\vert M \vert = 1\), but this is the definition of a well-formed weighted projective space.
\end{proof}

\subsection{Dualising sheaf}

\begin{proposition}\label{proposition:dualising-sheaf}
  Let \(Y = \Gr_k(a_0, \ldots, a_n) \subset \mathrm{w} \mathbb{P}\) be a well-formed weighted Grassmannian. The dualising sheaves of \(Y\) and \(\mathrm{w} \mathbb{P}\) are isomorphic to
  \begin{gather*}
    \omega_Y \simeq \mathcal{O}_Y
    \left (
      - k
      \left (
        \sum_{i = 0}^n a_i
      \right ) +
      (n + 1)
      \left (
        \sum_{i = 0}^{k - 2} a_i
      \right )
    \right), \\
    \omega_{\mathrm{w} \mathbb{P}} \simeq \mathcal{O}_{\mathrm{w} \mathbb{P}}
    \left (
      - \binom{n}{k - 1}
      \left (
        \sum_{i = 0}^n  a_i
      \right ) +
      \binom{n + 1}{k}
      \left (
        \sum_{i = 0}^{k - 2} a_i
      \right )
    \right ).
  \end{gather*}
\end{proposition}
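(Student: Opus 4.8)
The plan is to compute both dualising sheaves directly from the adjunction-type machinery already set up in Section~\ref{section:construction}, exploiting that $Y \subset \mathrm{w}\mathbb{P}$ is a well-formed quasi-smooth subvariety cut out by Plücker relations, and that its affine cone is arithmetically Cohen--Macaulay (Lemma~\ref{lemma:serre-sheaf-extension}). First I would treat $\mathrm{w}\mathbb{P}$. For a well-formed weighted projective space $\mathbb{P}(b_0, \ldots, b_N) = \Proj(R)$ with $R = \Bbbk[T'_0, \ldots, T'_N]$, $\deg T'_i = b_i$, one has $\omega_{\mathrm{w}\mathbb{P}} \simeq \mathcal{O}_{\mathrm{w}\mathbb{P}}(-\sum_i b_i)$; so it suffices to sum the $\mathbb{Z}$-grading of Corollary~\ref{corollary:SL-grading} over all multi-indices. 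Each $a_i$ occurs in $\deg(T_{i_1 < \cdots < i_k})$ with coefficient $+1$ for $\binom{n}{k-1}$ of the multi-indices (those containing $i$), and each $a_l$ with $l \in \{0, \ldots, k-2\}$ additionally picks up a global coefficient $-\binom{n+1}{k}$ from the $-\sum_{l=0}^{k-2}a_l$ summand present in all $\binom{n+1}{k}$ coordinates. Collecting, $\sum_I \deg(T_I) = \binom{n}{k-1}\bigl(\sum_{i=0}^n a_i\bigr) - \binom{n+1}{k}\bigl(\sum_{i=0}^{k-2} a_i\bigr)$, which gives the stated $\omega_{\mathrm{w}\mathbb{P}}$.

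Next I would handle $\omega_Y$. The cleanest route is to use the standard resolution of the Grassmannian in its Plücker embedding together with the grading. Concretely: $Y \subset \mathrm{w}\mathbb{P}$ is arithmetically Cohen--Macaulay (cited in the proof of Lemma~\ref{lemma:serre-sheaf-extension}), so its coordinate ring $\mathcal{R}(Y)$ admits a finite graded free resolution over $\mathcal{R}(\mathrm{w}\mathbb{P})$, and the canonical module $\omega_{\mathcal{R}(Y)}$ is computed from the last term of this resolution (graded local duality). The twist of $\omega_Y$ is then $a(\mathcal{R}(Y))$, the $a$-invariant, which equals $(\text{top twist in the resolution}) - \sum_I \deg(T_I)$. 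Since the resolution of a Grassmannian in Plücker coordinates is $\operatorname{GL}(V)$-equivariant (it is the standard geometric resolution, e.g.\ via the Kempf collapsing / Lascoux resolution), all its terms are determined by $\operatorname{SL}(V)$-weights, and the grading we introduced is precisely a coweight of $G(\psi) = \psi(G)\cdot Z(\operatorname{GL}(W))$. In particular the \emph{relative} canonical twist $\omega_{\mathcal{R}(Y)} \otimes \omega_{\mathcal{R}(\mathrm{w}\mathbb{P})}^{\vee}$ is governed by an $\operatorname{SL}(V)$-invariant piece of representation data, so its degree is a linear functional in $(a_0, \ldots, a_n)$ that is forced, up to the $\binom{n+1}{k}$-versus-$(n+1)$ normalisation, by the known smooth case $a_0 = \cdots = a_n = 1$. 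That is: for the ordinary Grassmannian $\operatorname{Gr}(k, n+1) \subset \mathbb{P}^{\binom{n+1}{k}-1}$ one has $\omega_{\operatorname{Gr}} \simeq \mathcal{O}(-(n+1))$ in the Plücker polarisation, and our formula at $(1, \ldots, 1)$ reads $-k(n+1) + (n+1)(k-1) = -(n+1)$, confirming the normalisation.

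To turn this into a clean argument I would use Lemma~\ref{lemma:serre-sheaf-extension}: since $\codim_Y(Y \cap \operatorname{Sing}(\mathrm{w}\mathbb{P})) \geqslant 2$ and $Y$ is Cohen--Macaulay, $\omega_Y \simeq j_* j^* \omega_{Y^{\circ}}$ where $Y^{\circ} = Y \setminus \operatorname{Sing}(\mathrm{w}\mathbb{P})$, so it suffices to identify $\omega_{Y^{\circ}}$ as some $\mathcal{O}_{Y^{\circ}}(d)$ and then read off $d$. On $Y^{\circ}$ we are inside the smooth locus of $\mathrm{w}\mathbb{P}$, which is an orbifold chart quotient of an open subset of $\mathbb{P}$ (Remark~\ref{remark:quotient}); pulling back to the ordinary Grassmannian $\widetilde{Y} = G/P$, the canonical sheaf there is $\mathcal{O}_{\widetilde{Y}}(-(n+1))$ in the Plücker line bundle $\mathcal{L}$, and the $\mathbb{Z}$-grading rescales the identification of the ample generator of $\operatorname{Cl}(Y)$ with $\mathcal{L}$ by the combinatorial factors computed above. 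Matching the twist of $\mathcal{O}_{\mathbb{P}}$ versus $\mathcal{O}_{\mathrm{w}\mathbb{P}}$ — the ratio $\binom{n+1}{k} : (n+1)$ of the two formulae in the statement is exactly the ratio of the $a$-invariants of the two cones — then yields $\omega_Y \simeq \mathcal{O}_Y\bigl(-k\sum a_i + (n+1)\sum_{i=0}^{k-2} a_i\bigr)$.

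\textbf{Main obstacle.} The delicate point is the passage from the ambient twist $\omega_{\mathrm{w}\mathbb{P}}$ to $\omega_Y$: one must be careful that ``adjunction'' in a weighted/singular ambient is controlled, i.e.\ that the Cohen--Macaulay and well-formedness hypotheses genuinely let one compute $\omega_Y$ from the resolution of the Grassmannian cone twisted by our grading, rather than having hidden contributions along $Y \cap \operatorname{Sing}(\mathrm{w}\mathbb{P})$. This is where Lemma~\ref{lemma:serre-sheaf-extension} and the arithmetic Cohen--Macaulayness of the Plücker cone do the real work, reducing everything to the bookkeeping of a single linear functional in $(a_0, \ldots, a_n)$ that is then pinned down by $\operatorname{SL}(V)$-equivariance of the resolution together with the $(1, \ldots, 1)$ normalisation.
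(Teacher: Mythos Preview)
Your computation of \(\omega_{\mathrm{w}\mathbb{P}}\) is correct and coincides with the paper's. The gap is in the passage to \(\omega_Y\). You propose two routes---the Lascoux resolution and ``matching twists'' via the quotient \(\pi\colon \widetilde{Y}\to Y\)---but neither is carried out. In the first, you never identify the top graded piece of the resolution under the coweight \(\gamma^{\vee}\), so the \(a\)-invariant is never actually computed. In the second, knowing that \(\omega_{\widetilde{Y}}\simeq\mathcal{O}_{\widetilde{Y}}(-(n+1))\) does not by itself determine the integer \(d\) with \(\omega_Y\simeq\mathcal{O}_Y(d)\): the pullback \(\pi^*\mathcal{O}_Y(1)\) is not \(\mathcal{O}_{\widetilde{Y}}(1)\), and the relation between the two class groups depends on the grading in a way you do not spell out. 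Your attempt to close this by ``\(\SL(V)\)-equivariance plus the \((1,\ldots,1)\) normalisation'' does not work: the linear functional you seek has \(n+1\) unknown coefficients, one evaluation gives one constraint, and---as Remark~\ref{remark:weyl-action} stresses---the Weyl group does \emph{not} act by permuting the parameters \((a_0,\ldots,a_n)\), so equivariance does not force the remaining coefficients.

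The paper's argument is more concrete and avoids all of this. It uses the fact that the ordinary Grassmannian \(\widetilde{Y}\) carries a \(T_{G(\psi)}\)-invariant rational top form whose divisor is \(-\sum_{\sigma}\widetilde{D}_{\sigma(\{0,\ldots,k-1\})}\), the sum running over the \emph{cyclic} subgroup \(\langle(0,\ldots,n)\rangle\simeq\mathbb{Z}/(n+1)\mathbb{Z}\subset S_{n+1}\). Being torus-invariant, this form is in particular \(\mathcal{G}\)-invariant and so descends to \(Y^{\circ}\); combined with the \((S2)\)-reduction via Lemma~\ref{lemma:serre-sheaf-extension} (which you correctly invoke), one gets \(\deg(-K_Y)=\sum_{\sigma\in\langle(0,\ldots,n)\rangle}\deg(T_{\sigma(\{0,\ldots,k-1\})})\). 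Since each index \(i\in\{0,\ldots,n\}\) lies in exactly \(k\) of these \((n+1)\) cyclic intervals, the sum is \(k\sum_i a_i-(n+1)\sum_{l=0}^{k-2}a_l\), and the formula drops out. The point you are missing is precisely this choice of an explicit \(T\)-invariant anticanonical divisor on \(\widetilde{Y}\) as a sum of \((n+1)\) specific Schubert divisors; once you have it, the degree computation is a one-line count, with no need for resolutions or normalisation arguments.
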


\begin{example}
  For \(k = 2\), \(n = 3\), the single Pl\"{u}cker relation is given by
  \begin{gather*}
    T_{0, 1} T_{2, 3} + T_{1, 2} T_{0, 3} - T_{0, 2} T_{1, 3} = 0.
  \end{gather*}
  We can clearly see that it is quasi-homogeneous of the degree
  \[
    -a_0 + a_1 + a_2 + a_3 = \deg(-K_{\mathrm{w} \mathbb{P}}) - \deg(-K_Y),
  \]
  which is just the adjunction formula for a weighted hypersurface \(Y \subset \mathrm{w} \mathbb{P}\).
\end{example}

\begin{remark}
  Note that for \(k = 1\) or \((a_0, \ldots, a_n) = (1, \ldots, 1)\) we obtain the classical formulae for weighted projective spaces and for usual Grassmannians.
\end{remark}

\begin{proof}[Proof of Proposition~\ref{proposition:dualising-sheaf}]
  We can present \(Y \subset \mathrm{w} \mathbb{P}\) as a quotient \(\pi \colon \widetilde{Y} \rightarrow Y\) of a usual Grassmannian \(\widetilde{Y} \subset \mathbb{P}\) induced by the quotient map \(\pi \colon \mathbb{P} \rightarrow \mathrm{w} \mathbb{P}\) (see Remark~\ref{remark:quotient}), where the finite abelian group \(\mathcal{G}\) acts on coordinates of \(\mathbb{P}\) by multiplication on roots of unity. We denote by \(\{\widetilde{T}_I\}\) and \(\{T_I\}\) the Pl\"{u}cker coordinates on \(\mathbb{P}\) and \(\mathrm{w} \mathbb{P}\), respectively. Note that \(\Sing(Y) = Y \cap \Sing(\mathrm{w} \mathbb{P})\) by Lemma~\ref{lemma:singular-locus}

  The dualising sheaf \(\omega_Y\) satisfies the \((S2)\)-property by~\cite[Corollary~5.69]{kollar/birational}. Lemma~\ref{lemma:serre-sheaf-extension} says that we have
  \(\mathcal{O}_Y(k) \simeq j_* j^* \mathcal{O}_Y(k) \simeq j_* j^* i^* \mathcal{O}_{\mathrm{w} \mathbb{P}}(k)\)
  for any \(k\), where \(j \colon Y^{\circ} \hookrightarrow Y\) and \(i \colon Y \hookrightarrow \mathrm{w} \mathbb{P}\) are natural embeddings. In other words, we only have to prove the isomorphism along the smooth open part \(Y^{\circ} = Y \setminus \Sing(Y) = Y \setminus \Sing(\mathrm{w} \mathbb{P})\).

  Let us also recall that along the smooth open part the dualising sheaf is isomorphic to the usual canonical sheaf (see~\cite[Theorem~6.4.32]{liu/algebraic}). Moreover, it is known that \(\omega_Y = \pi_* (\omega_{\widetilde{Y}}^{\mathcal{G}})\) (for example, see~\cite{peskin/dualizing}).  In other words, it is sufficient to explicitly construct a \(\mathcal{G}\)-invariant rational differential form on \(\widetilde{Y}^{\circ} = \pi^{-1}(Y^{\circ})\).

  Note that the tori \(T_{G(\psi)} / \im(\tau_W^{\vee})\) and \(T_{G(\psi)} / \im(\gamma^{\vee})\) act on \(\widetilde{Y}\) and \(Y\), respectively. Moreover, \(Y = \widehat{Y} // \im(\gamma^{\vee}) \simeq \widetilde{Y} / \mathcal{G}\), where \(\gamma^{\vee}\) acts on \(\widetilde{Y}\) through \(T_{G(\psi)} / \im(\tau_W^{\vee})\). Recall that the Grassmannian \(\widetilde{Y}\) admits a \(T_{G(\psi)}\)-invariant rational differential form with the divisor on~\(\widetilde{Y}\) of the following form:
  \[
    -\sum_{\sigma \in \langle (0, \ldots, n) \rangle}
    \widetilde{D}_{\sigma(\{0, \ldots, k - 1\})}, \quad
    \widetilde{D}_I = \{\widetilde{T}_I = 0\} \cap \widetilde{Y} \subset \widetilde{Y}, \quad
    \langle (0, \ldots, n) \rangle \simeq \mathbb{Z} / (n + 1) \mathbb{Z}.
  \]
  This rational differential form correctly defines a rational differential form on \(Y^{\circ}\). We will denote by \(K_Y\) the corresponding divisor on \(Y\).

  Now we only have to compute the degrees by means of Proposition~\ref{proposition:SL-grading}:
  \[
    \deg(-K_Y) = \sum_{\sigma \in \langle (0, \ldots, n) \rangle} \deg(T_{\sigma(\{0, \ldots, k - 1\})}), \quad
    \deg(-K_{\mathrm{w} \mathbb{P}}) = \sum_{I = \langle i_1, \ldots, i_k \rangle} \deg(T_I). \qedhere
  \]
\end{proof}

\subsection{Hilbert series}

To any weighted generalised Grassmannian \(Y \subset \mathrm{w} \mathbb{P}\) we can correspond its \emph{Hilbert series} (see Definition~\ref{definition:hilbert-series}). We can explicitly compute the Hilbert series of \(Y \subset \mathrm{w} \mathbb{P}\) with the following version of the Weyl character formula.

\begin{proposition}[see~{\cite[Theorem~3.2]{qureshi/flag-I}}]\label{proposition:hilbert-series-weyl}
  Let \(Y \subset \mathrm{w} \mathbb{P}\) be a weighted generalised Grassmannian defined by a fundamental weight \(\lambda\) and a positive coweight of form \(\gamma^{\vee} = \psi(\mu^{\vee}) + d \cdot \tau^{\vee}_{V_{\lambda}}\), where \(\psi \colon G \hookrightarrow \GL (V_{\lambda})\) is the corresponding representation of the highest weight \(\lambda\), and \(\mu^{\vee} \in Y(G)\) is an integral coweight of \(G\).

  Then the Hilbert series of \(Y \subset \mathrm{w} \mathbb{P}\) has the following form:
  \begin{gather*}
    \mathcal{H}(Y) = \frac{\widetilde{\mathcal{P}}(Y)}{\widetilde{\mathcal{Q}}(Y)}, \quad
    \widetilde{\mathcal{P}}(Y) = \sum_{\sigma \in \mathcal{W}} (-1)^{\sigma}
    \frac{t^{\langle \sigma \rho, \mu^{\vee} \rangle}}{1 - t^{\langle \sigma \lambda, \mu^{\vee} \rangle + d}}, \\
    \widetilde{\mathcal{Q}}(Y) = \sum_{\sigma \in \mathcal{W}} (-1)^\sigma t^{\langle \sigma \rho, \mu^{\vee} \rangle} =
    t^{\langle \rho, \mu^{\vee} \rangle} \prod_{\alpha \in \Phi_+} (1 - t^{\langle - \alpha, \mu^{\vee} \rangle}),
  \end{gather*}
  where \(\mathcal{W}\) is the Weyl group, \(\alpha \in \Phi_+\) are positive roots, and \(\rho\) is the Weyl vector.
\end{proposition}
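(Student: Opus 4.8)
The plan is to realise the Hilbert series as a generating function for the weight multiplicities of the representations \(V_{m\lambda}\), and then feed in the Weyl character formula. First I would recall the structure of the coordinate ring: as an (ungraded) \(G\)-algebra carrying its classical grading, \(\mathcal{R}(Y) = \Bbbk[\{T_I\}] / I(Y)\) is the homogeneous coordinate ring of the generalised Grassmannian \(\widetilde{Y} = G/P\) in its minimal embedding, and by the Borel--Weil theorem together with the projective normality of \(\widetilde{Y}\) (see, e.g., \cite{lakshmibai/monomial}) its classical degree-\(m\) component is \(G\)-equivariantly dual to \(V_{m\lambda}\). The coweight \(\gamma^{\vee} = \psi(\mu^{\vee}) + d \cdot \tau^{\vee}_{V_{\lambda}}\) refines this into the \(\mathbb{Z}\)-grading of \(\mathrm{w}\mathbb{P}\); following the identification of coordinates with weights as in Example~\ref{example:WPS}, a weight-\(\nu\) vector lying in classical degree \(m\) receives \(\mathbb{Z}\)-degree \(\langle \nu, \mu^{\vee} \rangle + md\). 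Since \(\gamma^{\vee} \in Y_{+}(G(\psi))\) this grading is positive, so each \(\mathcal{R}(Y)_k\) is finite-dimensional; combining Lemma~\ref{lemma:hilbert-series} with Definition~\ref{definition:hilbert-series} I would then write
\[
  \mathcal{H}(Y) = \sum_{k \geq 0} \dim \mathcal{R}(Y)_k \, t^k = \sum_{m \geq 0} \sum_{\nu} \dim(V_{m\lambda})_{\nu} \, t^{\langle \nu, \mu^{\vee} \rangle + md},
\]
the inner sum ranging over the weights \(\nu\) of \(V_{m\lambda}\).

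Next I would introduce the specialisation ring homomorphism \(s \colon \mathbb{Z}[X(T_G)] \to \mathbb{Z}[t, t^{-1}]\), \(e^{\nu} \mapsto t^{\langle \nu, \mu^{\vee} \rangle}\), under which \(s(\mathrm{ch}(V_{m\lambda})) = \sum_{\nu} \dim(V_{m\lambda})_{\nu}\, t^{\langle \nu, \mu^{\vee} \rangle}\) is exactly the inner sum above, so that \(\mathcal{H}(Y) = \sum_{m \geq 0} t^{md} s(\mathrm{ch}(V_{m\lambda}))\). Applying the Weyl character formula and the fact that \(s\) is a ring homomorphism, I would record the Laurent-polynomial identity
\[
  s(\mathrm{ch}(V_{m\lambda})) \cdot \widetilde{\mathcal{Q}}(Y) = \sum_{\sigma \in \mathcal{W}} (-1)^{\sigma} t^{\langle \sigma(m\lambda + \rho), \mu^{\vee} \rangle},
\]
where \(\widetilde{\mathcal{Q}}(Y) = s\bigl(\sum_{\sigma} (-1)^{\sigma} e^{\sigma \rho}\bigr)\) is the specialised Weyl denominator; its product form \(t^{\langle \rho, \mu^{\vee} \rangle} \prod_{\alpha \in \Phi_+} (1 - t^{\langle -\alpha, \mu^{\vee} \rangle})\) is the image under \(s\) of the Weyl denominator identity \(\sum_{\sigma} (-1)^{\sigma} e^{\sigma \rho} = e^{\rho} \prod_{\alpha \in \Phi_+} (1 - e^{-\alpha})\). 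This reproduces both displayed forms of \(\widetilde{\mathcal{Q}}(Y)\) in the statement.

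Multiplying the Hilbert series by this denominator and splitting the exponent as \(\langle \sigma(m\lambda + \rho), \mu^{\vee} \rangle = m \langle \sigma\lambda, \mu^{\vee} \rangle + \langle \sigma\rho, \mu^{\vee} \rangle\), I would obtain
\[
  \mathcal{H}(Y) \cdot \widetilde{\mathcal{Q}}(Y) = \sum_{m \geq 0} \sum_{\sigma \in \mathcal{W}} (-1)^{\sigma} t^{\langle \sigma\rho, \mu^{\vee} \rangle}\, t^{m(\langle \sigma\lambda, \mu^{\vee} \rangle + d)}.
\]
Interchanging the finite sum over \(\sigma\) with the sum over \(m\) and evaluating each geometric series \(\sum_{m \geq 0} t^{m(\langle \sigma\lambda, \mu^{\vee} \rangle + d)} = (1 - t^{\langle \sigma\lambda, \mu^{\vee} \rangle + d})^{-1}\), the right-hand side becomes \(\widetilde{\mathcal{P}}(Y)\). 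Hence \(\mathcal{H}(Y) \cdot \widetilde{\mathcal{Q}}(Y) = \widetilde{\mathcal{P}}(Y)\), which is the desired identity \(\mathcal{H}(Y) = \widetilde{\mathcal{P}}(Y) / \widetilde{\mathcal{Q}}(Y)\).

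The step requiring care, and the main obstacle, is precisely this interchange of summation and evaluation of the geometric series, which is legitimate only if each exponent \(\langle \sigma\lambda, \mu^{\vee} \rangle + d\) is strictly positive. The key point is that the positivity hypothesis \(\gamma^{\vee} \in Y_{+}(G(\psi))\) supplies exactly this: the extremal weights \(\sigma\lambda\), \(\sigma \in \mathcal{W}\), are weights of \(V_{\lambda}\) and hence label coordinates of \(\mathrm{w}\mathbb{P}\), so the \(\mathbb{Z}_{+}\)-grading condition forces their degrees \(\langle \sigma\lambda, \mu^{\vee} \rangle + d\) to be positive. With this in hand every geometric series converges, and since \(\mathcal{H}(Y) \cdot \widetilde{\mathcal{Q}}(Y)\) is a genuine Laurent series (a power series times a Laurent polynomial) the manipulation is an honest identity in the ring of formal Laurent series bounded below, the possibly negative powers coming from \(\langle \sigma\rho, \mu^{\vee} \rangle\) causing no difficulty. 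I would conclude by dividing by \(\widetilde{\mathcal{Q}}(Y) \neq 0\) in the fraction field.
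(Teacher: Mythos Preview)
The paper does not give its own proof of this proposition: it is quoted verbatim from \cite[Theorem~3.2]{qureshi/flag-I} and used as a black box for the computations in Section~\ref{section:series-A-invariants}. Your argument is correct and is essentially the standard derivation (and the one in the cited reference): decompose the coordinate ring under its classical grading as \(\bigoplus_{m \geq 0} V_{m\lambda}^{\ast}\) via Borel--Weil and projective normality, observe that the refined \(\mathbb{Z}\)-degree of a classical-degree-\(m\) vector of \(T_G\)-weight \(\nu\) is \(\langle \nu,\mu^{\vee}\rangle + md\), specialise the Weyl character formula along \(e^{\nu}\mapsto t^{\langle \nu,\mu^{\vee}\rangle}\), and sum the geometric series. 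Your justification that the positivity hypothesis \(\gamma^{\vee}\in Y_{+}(G(\psi))\) forces each \(\langle \sigma\lambda,\mu^{\vee}\rangle + d > 0\) (since the \(\sigma\lambda\) are extremal weights labelling coordinates) is exactly the point needed to make the interchange of sums and the geometric-series evaluation legitimate, and your remark that the manipulation takes place in the ring of formal Laurent series bounded below handles the possibly negative exponents \(\langle \sigma\rho,\mu^{\vee}\rangle\) cleanly.
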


\begin{remark}
  It would be more convenient for us to write this formal series as
  \[
    \mathcal{H}(Y) = \frac{\mathcal{P}(Y)}{\mathcal{Q}(Y)}, \quad \mathcal{P}(Y) = \sum_{\sigma \in \mathcal{W}} (-1)^\sigma
    \frac{t^{\langle \sigma \rho - \rho, \mu^{\vee} \rangle}}{1 - t^{\langle \sigma \lambda, \mu^{\vee} \rangle + d}}, \quad
    \mathcal{Q}(Y) = \prod_{\alpha \in \Delta_+} (1 - t^{\langle - \alpha, \mu^{\vee} \rangle}).
  \]
  This way \(\langle \sigma \rho - \rho, \omega_i^{\vee} \rangle\) is always integral for any fundamental coweight \(\omega_i^{\vee}\) of \(G\).
\end{remark}

From now on \(G = \SL(V)\), and \(Y = \Gr_k(a_0, \ldots, a_n)\) is a weighted Grassmannian.

\begin{proposition}\label{proposition:hilbert-series}
  Let \(Y = \Gr_k(a_0, \ldots, a_n)\) be a weighted Grassmannian. Its Hilbert series (see Definition~\ref{definition:hilbert-series}) equals \(\mathcal{H}(Y) = \mathcal{P}(Y) / \mathcal{Q}(Y)\), where
  \begin{gather*}
    \mathcal{P}(Y) = \sum_{I = (i_1 < \cdots < i_k)} \frac{F(I)}{1 - t^{a(I)}}, \quad
    \mathcal{Q}(Y) = \prod_{\substack{i,j = 0, \ldots, n \\ i < j}} (1 - t^{a_j - a_i}); \\
    a(I) = -\sum_{l = 0}^{k - 2} a_l + \sum_{j = 1}^k a_{i_j} > 0, \quad
    F(I) =
    \left (
      \sum_{\sigma \in S_I} (-1)^{\sigma} t^{f(\sigma)}
    \right ); \\
    S_I = \{\sigma \in S_{n + 1} : \sigma(\{0, \ldots, k - 1\}) = I\}, \quad
    f(\sigma) = \sum_{i = 0}^n (i - \sigma^{-1}(i)) a_i.
  \end{gather*}
\end{proposition}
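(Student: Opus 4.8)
The plan is to specialise the general Weyl-character formula of Proposition~\ref{proposition:hilbert-series-weyl} (in the rewritten form of the following Remark) to the case \(G = \SL(V)\) with the fundamental weight \(\lambda = \omega_k\), and then reorganise the sum over the Weyl group \(\mathcal{W} \simeq S_{n+1}\) according to the image of the set \(\{0, \ldots, k-1\}\). First I would fix the coweight \(\mu^{\vee}\) and the shift \(d\) coming from the coweight \(\sum_{i=0}^n a_i \gamma^{\vee}_i\): by the computation in the proof of Corollary~\ref{corollary:SL-grading} we have \(\mu^{\vee} = \sum_{i=1}^n (a_{i-1} - a_i)\omega_i^{\vee}\) and \(d = m = \tfrac{k}{n+1}\sum_{i=0}^n a_i - \sum_{i=0}^{k-2} a_i\). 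The key identity to establish is that for \(\sigma \in \mathcal{W}\) one has \(\langle \sigma\omega_k, \mu^{\vee}\rangle + d = a(I)\) where \(I = \sigma(\{0,\ldots,k-1\})\), and \(\langle \sigma\rho - \rho, \mu^{\vee}\rangle = f(\sigma)\); both follow from Remark~\ref{remark:SL-coordinates}, which identifies \(\sigma\omega_k\) with \(\sum_{j=1}^k e_{i_j}\) for \(I = \{i_1 < \cdots < i_k\}\), together with the pairing \(\langle \sum_j e_{i_j}, \mu^{\vee}\rangle = -\tfrac{k}{n+1}\sum_i a_i + \sum_j a_{i_j}\) already computed in Corollary~\ref{corollary:SL-grading}. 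For the numerator exponent, writing \(\rho = \sum_{i=0}^n(n-i)e_i / (\text{suitable normalisation})\) or rather using \(\langle \sigma\rho - \rho, \mu^{\vee}\rangle\) directly: since \(\mu^{\vee}\) pairs with \(e_i\) as (a constant plus) \(a_i\), and \(\sigma\rho - \rho\) has \(e_i\)-coefficient \(\sigma^{-1}(i) - i\) up to sign conventions, one recovers \(f(\sigma) = \sum_i (i - \sigma^{-1}(i))a_i\) (the constant \(-k/(n+1)\sum a_i\) terms cancel because \(\sigma\rho - \rho\) lies in the root lattice, i.e.\ its \(e_i\)-coefficients sum to zero). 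I would verify these two identities carefully, checking signs against the Bourbaki conventions fixed in the paper.

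Next I would partition \(\mathcal{W} = \bigsqcup_I S_I\) over all \(k\)-subsets \(I \subset \{0,\ldots,n\}\), where \(S_I = \{\sigma : \sigma(\{0,\ldots,k-1\}) = I\}\) as in the statement. Since the denominator \(1 - t^{\langle \sigma\lambda,\mu^{\vee}\rangle + d}\) depends on \(\sigma\) only through \(I\) (equal to \(1 - t^{a(I)}\) by the identity above), the sum
\[
  \mathcal{P}(Y) = \sum_{\sigma \in \mathcal{W}} (-1)^{\sigma}\frac{t^{\langle \sigma\rho - \rho,\mu^{\vee}\rangle}}{1 - t^{\langle \sigma\lambda,\mu^{\vee}\rangle + d}}
\]
regroups as \(\sum_I \tfrac{1}{1 - t^{a(I)}} \sum_{\sigma \in S_I}(-1)^{\sigma} t^{f(\sigma)} = \sum_I \tfrac{F(I)}{1-t^{a(I)}}\), which is exactly the claimed \(\mathcal{P}(Y)\). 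For the denominator \(\mathcal{Q}(Y)\), I would use that the positive roots of \(A_n\) are \(\alpha = e_i - e_j\) for \(i < j\) (under the identification of Remark~\ref{remark:SL-coordinates}), so \(\langle -\alpha, \mu^{\vee}\rangle = \langle e_j - e_i, \mu^{\vee}\rangle = a_j - a_i\) (the \(-k/(n+1)\sum a_i\) constants again cancel in the difference), giving \(\mathcal{Q}(Y) = \prod_{i<j}(1 - t^{a_j - a_i})\) directly from Proposition~\ref{proposition:hilbert-series-weyl}.

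The main obstacle I expect is bookkeeping of sign and normalisation conventions: matching the Weyl-group action on \(\mu^{\vee}\) (which acts by inverse permutations of the \(\psi(e_i^{\vee})\), per Remark~\ref{remark:weyl-action}) against the action on weights, getting the \(\sigma^{-1}\) versus \(\sigma\) correct in \(f(\sigma)\), and confirming that \(d\) as extracted from \(\gamma^{\vee}\) in the \(\gamma^{\vee}_i\)-basis agrees with the \(d\) appearing in Proposition~\ref{proposition:hilbert-series-weyl} (there is a subtlety since the \(\mathbb{Z}\)-grading is defined via \(G(\psi)\), and one must check the induced representation-theoretic data is the claimed \((\mu^{\vee}, d)\)). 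A secondary point is justifying that \(a(I) > 0\) for all \(I\) — but this is precisely the positivity condition defining \(\Gamma(n,k)\), hence automatic for a genuine weighted Grassmannian, so each geometric series \(1/(1-t^{a(I)})\) is a well-defined power series and the manipulation of formal series is legitimate. Once the two exponent identities are nailed down, the rest is a clean reindexing.
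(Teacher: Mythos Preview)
Your proposal is correct and follows essentially the same route as the paper: specialise the Weyl-character formula of Proposition~\ref{proposition:hilbert-series-weyl} (in the shifted form of the subsequent Remark), identify \(\langle\sigma\omega_k,\mu^{\vee}\rangle + d = a(I)\) via the pairing already computed in Corollary~\ref{corollary:SL-grading}, compute \(\langle\sigma\rho-\rho,\mu^{\vee}\rangle = f(\sigma)\), partition \(\mathcal{W}\) into the cosets \(S_I\), and read off \(\mathcal{Q}(Y)\) from the positive roots \(e_i - e_j\). The only cosmetic difference is in the verification of \(f(\sigma)\): the paper expands \(\rho = \sum_l c_l\alpha_l\) with \(c_l = \sum_j \min(l,j) - ln/2\) and computes the pairing against \(\sigma^{-1}(\mu^{\vee})\) in the \(\omega_j^{\vee}\)-basis, whereas you argue via the \(e_i\)-coefficients of \(\sigma\rho - \rho\) and the cancellation of the constant \(-k/(n+1)\sum a_i\); both are straightforward once the conventions are fixed, and your anticipated bookkeeping obstacles are exactly the ones the paper works through.
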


\begin{remark}
  Note that \(S_I = \sigma_I \cdot \Stab(I)\), where \(\Stab(I) \simeq S_k \times S_{n + 1 - k}\) preserves the partition of \(\{0, \ldots, n\}\) into \(I\) and its complement, and \(\sigma_I \in S_{n + 1}\) is a permutation sending \(l\) to \(i_{l + 1}\) for any \(l = 0, \ldots, k - 1\).
\end{remark}

\begin{example}
  Let \(k = 1\) and \(n = 2\), i.e., \(Y = \mathbb{P}(a_0, a_1, a_2)\). Then we have
  \begin{gather*}
    \mathcal{P}(Y) =
    \frac{1 - t^{-a_1 + a_2}}{1 - t^{a_0}} +
    \frac{-t^{-a_0 + a_1} + t^{-2 a_0 + a_1 + a_2}}{1 - t^{a_1}} +
    \frac{-t^{-2 a_0 + 2 a_2} + t^{-a_0 - a_1 + 2 a_2}}{1 - t^{a_2}}, \\
    \mathcal{Q}(Y) = (1 - t^{a_2 - a_1}) (1 - t^{a_2 - a_0}) (1 - t^{a_1 - a_0}).
  \end{gather*}
  One can check by direct computations that \(\mathcal{P}(Y) / \mathcal{Q}(Y) = \prod_{i = 0}^2 (1 - t^{a_i})^{-1}\).
\end{example}

\begin{example}
  Let \(k = 2\) and \(n = 4\), i.e., \(Y = \Gr_2(a_0, \ldots, a_4)\) is a weighted Grassmannian of lines in a five-dimensional vector space. Then we have
  \begin{gather*}
    \mathcal{P}(Y) = \sum_{i = 1}^4 \frac{F{(0, i)}}{1 - t^{a_i}} + 
    \sum_{\substack{i,j = 1, \ldots, 4 \\ i < j}} \frac{F(i,j)}{1 - t^{a_i + a_j - a_0}}, \quad
    \mathcal{Q}(Y) = \prod_{\substack{i,j = 0, \ldots, 4 \\ i < j}} (1 - t^{a_j - a_i}); \\    
    F(i, j) =
    \left (
      \sum_{\sigma \in S(i, j)} (-1)^{\sigma} t^{f(\sigma)}
    \right ); \quad
    f(\sigma) = \sum_{i = 0}^4 (i - \sigma^{-1}(i)) a_i; \\
    S(i, j) = \{\sigma \in S_5 : \sigma(\{0, 1\}) = \{i, j\}\}.
  \end{gather*}
  It can be checked by direct computation (see Remark~\ref{remark:GL-construction}) that our formula is compatible with the formula for Hilbert series for \(Y = \Gr_2(a_0, \ldots, a_4)\) in~\cite{corti/weighted}.
\end{example}

\begin{remark}
  Hilbert--Serre theorem implies that the Hilbert series of any weighted Grassmannian can always be written in the form \(\mathcal{H}(Y) = P(t) / \prod_I (1 - t^{\deg(T_I)})\). There is a lot of a cancellation involved in reducing the Hilbert series of a weighted Grassmannian given by Proposition~\ref{proposition:hilbert-series} to this compact form.
\end{remark}

\begin{problem}
  Find a closed comprehensive formula for the Hilbert polynomial \(\mathcal{H}(Y) \cdot \prod_I (1 - t^{\deg(T_I)})\) of a weighted Grassmannian \(Y = \Gr_k(a_0, \ldots, a_n)\).
\end{problem}

To prove Proposition~\ref{proposition:hilbert-series}, we only have to combine Propositions~\ref{proposition:SL-grading} and~\ref{proposition:hilbert-series-weyl}. 

\begin{lemma}
  The denominator \(\mathcal{Q}(Y)\) has the form
  \[
    \mathcal{Q}(Y) = \prod_{\substack{i,j = 0, \ldots, n \\ i < j}} (1 - t^{a_j - a_i}).
  \]
\end{lemma}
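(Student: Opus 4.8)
The plan is simply to read $\mathcal{Q}(Y)$ off the product formula of Proposition~\ref{proposition:hilbert-series-weyl} and translate the abstract pairings $\langle -\alpha, \mu^{\vee}\rangle$, $\alpha \in \Phi_+$, into the explicit coordinates $(a_0,\ldots,a_n)$. Recall from Corollary~\ref{corollary:SL-grading} (cf.\ Example~\ref{example:WPS}) that the coweight $\sum_{i=0}^n a_i \gamma^{\vee}_i$ defining $Y = \Gr_k(a_0,\ldots,a_n)$ has the form $\psi(\mu^{\vee}) + m \cdot \tau^{\vee}_W$ with $\mu^{\vee} = \sum_{i=1}^n (a_{i-1}-a_i)\omega_i^{\vee}$; this is the coweight $\mu^{\vee}$ entering Proposition~\ref{proposition:hilbert-series-weyl}. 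By that proposition (and the normalisation of the following remark), the denominator is the positive-root product $\mathcal{Q}(Y) = \prod_{\alpha \in \Phi_+}\bigl(1 - t^{\langle -\alpha, \mu^{\vee}\rangle}\bigr)$, so it suffices to evaluate $\langle -\alpha, \mu^{\vee}\rangle$ for each positive root of $\Phi = A_n$.

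For this I would use Remark~\ref{remark:SL-coordinates}: the weights of the standard representation $V$ are exactly $\{e_0,\ldots,e_n\} = \mathcal{W}\cdot\omega_1$, and with the standard numbering the simple roots are $\alpha_m = e_{m-1} - e_m$ for $m = 1,\ldots,n$. Hence $\Phi_+ = \{\alpha_i + \alpha_{i+1} + \cdots + \alpha_j : 1 \leqslant i \leqslant j \leqslant n\} = \{e_i - e_j : 0 \leqslant i < j \leqslant n\}$, a set of $\binom{n+1}{2}$ roots, which already matches the number of factors in the claimed product. By Example~\ref{example:WPS} we have $\langle e_i, \mu^{\vee}\rangle = a_i - \tfrac{1}{n+1}\sum_{l=0}^n a_l$, so for a positive root $\alpha = e_i - e_j$ with $i < j$,
\[
  \langle -\alpha, \mu^{\vee}\rangle = \langle e_j, \mu^{\vee}\rangle - \langle e_i, \mu^{\vee}\rangle = a_j - a_i.
\]
Substituting this into the product over $\Phi_+$ gives $\mathcal{Q}(Y) = \prod_{0 \leqslant i < j \leqslant n}\bigl(1 - t^{a_j - a_i}\bigr)$, which is the assertion.

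There is no genuine obstacle here; the computation is a one-line evaluation of a pairing. The only point requiring attention is the choice of positive system, since the two natural conventions produce $\prod_{i<j}(1 - t^{a_j - a_i})$ versus $\prod_{i<j}(1 - t^{a_i - a_j})$. Fixing $\Phi_+ = \{e_i - e_j : i < j\}$ — equivalently, taking $\alpha_m = e_{m-1} - e_m$ under the identification of Remark~\ref{remark:SL-coordinates} — yields precisely the stated formula, and this is the convention already in force in the rest of the section.
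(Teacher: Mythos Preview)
Your proof is correct and follows exactly the route the paper takes: the paper's own proof is the one-line remark that the formula ``follows directly from the parametrisation provided by Proposition~\ref{proposition:SL-grading} and the classical description of positive roots for \(\SL(V)\)'', and you have simply unpacked both ingredients explicitly. Your care about the convention for \(\Phi_+\) is appropriate but there is nothing new relative to the paper's argument.
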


\begin{proof}
  Follows directly from the parametrisation provided by Proposition~\ref{proposition:SL-grading} and the classical description of positive roots for \(\SL(V)\).
\end{proof}
  
\begin{lemma}\label{lemma:weyl-orbit}
  For any \(k = 1, \ldots, n\) let \(\omega_k\) be the fundamental weight of the group \(G = \SL(V)\). Then the Weyl orbit \(\mathcal{W} \cdot \omega_k\) consists of the following elements:
  \[
    \lambda_I = \sum_{j = 1}^k e_{i_j}, \quad I = (i_1 < \cdots < i_k).
  \]
  In particular, we have \(\omega_k = \lambda_I\), where we put \(I = \{0, \ldots, k - 1\}\). 
\end{lemma}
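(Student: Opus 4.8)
The plan is to use the description of the \(\mathcal{W}\)-action recorded in Remark~\ref{remark:SL-coordinates}: under the chosen identifications the Weyl group \(\mathcal{W} \simeq S_{n+1}\) acts on the rational weight lattice by permuting the weights \(\{e_0, \ldots, e_n\}\) of the standard representation \(V\) (these being the orbit \(\mathcal{W} \cdot \omega_1\)). Hence for every \(k\)-element subset \(I = \{i_1 < \cdots < i_k\}\) and every \(\sigma \in \mathcal{W}\) one has \(\sigma \cdot \lambda_I = \lambda_{\sigma(I)}\), where \(\sigma(I) = \{\sigma(i_1), \ldots, \sigma(i_k)\}\). In particular the set \(\{\lambda_I : \vert I \vert = k\}\) is \(\mathcal{W}\)-stable, and it forms a single \(\mathcal{W}\)-orbit, because \(S_{n+1}\) acts transitively on the \(k\)-element subsets of \(\{0, \ldots, n\}\).

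It therefore suffices to show that \(\omega_k\) lies in this set, more precisely that \(\omega_k = \lambda_{\{0, \ldots, k - 1\}}\). First I would check this by a direct telescoping computation: summing the defining relations between the \(e_i\) and the fundamental weights \(\{\omega_i\}\) (with the boundary conventions \(\omega_0 = \omega_{n+1} = 0\)), the intermediate terms cancel and one is left with \(\sum_{i = 0}^{k - 1} e_i = \omega_k\). Equivalently, one can verify the characterising property \(\langle \lambda_{\{0, \ldots, k - 1\}}, \alpha_j^{\vee} \rangle = \delta_{jk}\) for every simple coroot \(\alpha_j^{\vee}\), using \(\langle \omega_i, \alpha_j^{\vee} \rangle = \delta_{ij}\); the contributions of consecutive summands \(e_i\) cancel except at the boundary index \(j = k\). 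A more conceptual alternative: \(\omega_k\) is the highest weight of \(W = \Lambda^k(V)\), whose weight multiset is exactly \(\{\lambda_I : \vert I \vert = k\}\) (the decomposable vectors \(e_{i_1} \wedge \cdots \wedge e_{i_k}\) form a weight basis, with \(e_{i_1} \wedge \cdots \wedge e_{i_k}\) of weight \(\lambda_I\)), and among these the unique dominant weight, i.e.\ the one pairing non-negatively with all \(\alpha_j^{\vee}\), is \(\lambda_{\{0, \ldots, k - 1\}}\).

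Putting the two steps together gives \(\mathcal{W} \cdot \omega_k = \mathcal{W} \cdot \lambda_{\{0, \ldots, k - 1\}} = \{\lambda_{\sigma(\{0, \ldots, k - 1\})} : \sigma \in \mathcal{W}\} = \{\lambda_I : \vert I \vert = k\}\), as claimed. I do not expect a genuine obstacle here; the only points needing a little care are the bookkeeping with the boundary conventions \(\omega_0 = \omega_{n+1} = 0\) when identifying \(\omega_k\) with \(\lambda_{\{0, \ldots, k - 1\}}\), and keeping track of whether \(\mathcal{W}\) permutes the \(\lambda_I\) covariantly or contravariantly (as discussed in Remark~\ref{remark:weyl-action}) — but for the statement of this lemma only the orbit as a set is relevant, so this variance is immaterial.
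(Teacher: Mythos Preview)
Your proposal is correct and follows essentially the same approach as the paper: both arguments identify \(\omega_k\) with \(\lambda_{\{0,\ldots,k-1\}}\) and then use that \(\mathcal{W}\simeq S_{n+1}\) permutes the \(e_i\), hence sends \(\lambda_I\) to \(\lambda_{\sigma(I)}\). The only cosmetic difference is that the paper rederives the permutation action from the simple-reflection formula \(r_i(\omega_j)=\omega_j-\delta_{ij}\alpha_i\) rather than citing Remark~\ref{remark:SL-coordinates}, and leaves the telescoping identity \(\sum_{i=0}^{k-1}e_i=\omega_k\) implicit.
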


\begin{proof}
  The Weyl group \(\mathcal{W}\) is generated by simple reflections \(r_1, \ldots, r_n\) acting on fundamental weights \(\omega_i\) as follows:
  \[
    r_i (\omega_j) =
    \begin{cases}
      \omega_j, & \text{ if } i \neq j; \\
      \omega_i - \alpha_i, & \text{ otherwise.}
    \end{cases}
  \]
  From this description we see that \(\mathcal{W} \simeq S_{n + 1}\) sends \(e_i\) to \(e_{\sigma(i)}\), and \(\lambda_I\) to \(\lambda_{\sigma(I)}\).
\end{proof}

\begin{corollary}
  For any \(I = (i_1 < \cdots < i_k)\) the stabiliser \(\Stab(\lambda_I)\) is isomorphic to \(S_k \times S_{n + 1 - k}\) and consists of permutations on \(\{0, \ldots, n\}\) preserving the partition of \(\{0, \ldots, n\}\) into the subset \(I \subset \{0, \ldots, n\}\) and its complement.
\end{corollary}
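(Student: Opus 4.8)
The plan is to reduce the statement to an elementary combinatorial fact by invoking the explicit Weyl action recorded in Lemma~\ref{lemma:weyl-orbit}. First, recall from (the proof of) Lemma~\ref{lemma:weyl-orbit} that under the identification $\mathcal{W} \simeq S_{n+1}$ a permutation $\sigma$ acts by $e_i \mapsto e_{\sigma(i)}$, hence carries $\lambda_I = \sum_{j=1}^k e_{i_j}$ to $\lambda_{\sigma(I)}$, where $\sigma(I) = \{\sigma(i_1), \ldots, \sigma(i_k)\}$. Therefore $\sigma$ stabilises $\lambda_I$ if and only if $\lambda_{\sigma(I)} = \lambda_I$, so everything hinges on when two such elements coincide.

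The next step is to observe that the assignment $J \mapsto \lambda_J$ is injective on $k$-element subsets of $\{0, \ldots, n\}$. Indeed, by Remark~\ref{remark:SL-coordinates} the $e_i$ are the weights of the standard representation $V$, so $\sum_{i=0}^n e_i$ is the weight of $\det V$, which is trivial; hence $\sum_i e_i = 0$, and since any $n$ of the $e_i$ generate the weight lattice, this is the only linear relation among them up to scaling. If $\lambda_J = \lambda_{J'}$ with $|J| = |J'| = k$, then $\sum_{i \in J \setminus J'} e_i = \sum_{i \in J' \setminus J} e_i$, so the difference of the indicator vectors of the disjoint, equinumerous sets $J \setminus J'$ and $J' \setminus J$ is a scalar multiple of the all-ones vector; a one-line check on that scalar (it cannot be $\pm 1$, as that would force one of the two sets to be everything and the other empty) shows it vanishes, i.e. $J = J'$.

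Combining the two steps, $\sigma \in \Stab(\lambda_I)$ if and only if $\sigma(I) = I$, equivalently $\sigma$ preserves the partition $\{0, \ldots, n\} = I \sqcup (\{0, \ldots, n\} \setminus I)$; the group of such permutations in $S_{n+1}$ is precisely $\mathrm{Sym}(I) \times \mathrm{Sym}(\{0, \ldots, n\} \setminus I) \simeq S_k \times S_{n+1-k}$, which is exactly the assertion (both the concrete description and the isomorphism type). I do not anticipate a genuine obstacle here: the substantive point is the injectivity of $J \mapsto \lambda_J$ in the second step, and even that amounts to the trivial remark about scalar multiples of the all-ones vector; the rest is immediate from Lemma~\ref{lemma:weyl-orbit}.
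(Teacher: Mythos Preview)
Your argument is correct and is exactly the intended reasoning; the paper states this corollary without proof, treating it as immediate from Lemma~\ref{lemma:weyl-orbit}. Your only addition is the explicit check that \(J \mapsto \lambda_J\) is injective on \(k\)-subsets, which is harmless extra care (and can be shortened: the vector \(\mathbf{1}_{J\setminus J'} - \mathbf{1}_{J'\setminus J}\) has entry sum \(0\), so if it equals \(c\cdot(1,\ldots,1)\) then \(c(n+1)=0\), hence \(c=0\)).
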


\begin{lemma}
  The numerator \(\mathcal{P}(Y)\) can be presented as
  \begin{gather*}
    \mathcal{P}(Y) = \sum_{I = (i_1 < \cdots < i_k)} \frac{F(I)}{1 - t^{a(I)}}, \quad
    F(I) =
    \left (
      \sum_{\sigma \in S_I} (-1)^{\sigma} t^{f(\sigma)}
    \right ), \\
    a(I) = -\sum_{l = 0}^{k - 2} a_l + \sum_{j = 1}^k a_{i_j}, \quad
    f(\sigma) = \sum_{i = 0}^n (i - \sigma^{-1}(i)) a_i, \\
    S_I = \{\sigma \in S_{n + 1} : \sigma(\{0, \ldots, k - 1\}) = I\}, \quad
  \end{gather*}
\end{lemma}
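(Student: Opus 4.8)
The plan is to specialise the Weyl--character formula of Proposition~\ref{proposition:hilbert-series-weyl} to $G = \SL(V)$ with $\lambda = \omega_k$ and the coweight $\gamma^{\vee} = \sum_{i = 0}^n a_i \gamma_i^{\vee}$, and then to reorganise the resulting alternating sum over $\mathcal{W} \simeq S_{n + 1}$ by collecting the permutations $\sigma$ according to the subset $\sigma(\{0, \ldots, k - 1\})$. First I would record, exactly as in the proof of Corollary~\ref{corollary:SL-grading}, the identity $\sum_{i = 0}^n a_i \gamma_i^{\vee} = \psi(\mu^{\vee}) + m \cdot \tau_W^{\vee}$ with $\mu^{\vee} = \sum_{i = 1}^n (a_{i - 1} - a_i) \omega_i^{\vee}$ and $m = \frac{k}{n + 1} \sum_i a_i - \sum_{l = 0}^{k - 2} a_l$, so that Proposition~\ref{proposition:hilbert-series-weyl} applies with this $\mu^{\vee}$ and $d = m$.

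Next I would compute the two pairings appearing in that formula. For the denominators: by Lemma~\ref{lemma:weyl-orbit} we have $\sigma \omega_k = \lambda_I$ with $I = \sigma(\{0, \ldots, k - 1\}) = (i_1 < \cdots < i_k)$, and the pairing computed in the proof of Corollary~\ref{corollary:SL-grading} gives $\langle \sigma \omega_k, \mu^{\vee} \rangle = -\frac{k}{n + 1} \sum_i a_i + \sum_{j = 1}^k a_{i_j}$; adding $d = m$ cancels the rational term and leaves $\langle \sigma \lambda, \mu^{\vee} \rangle + d = a(I)$, which depends on $\sigma$ only through $I$. For the numerators: using $\omega_j = \sum_{i = 0}^{j - 1} e_i$ one gets $\rho = \sum_{i = 0}^n (n - i) e_i$, and since $\mathcal{W}$ acts on the $e_i$ by permutation (Lemma~\ref{lemma:weyl-orbit}) this yields $\sigma \rho - \rho = \sum_{i = 0}^n (i - \sigma^{-1}(i)) e_i$; pairing with $\mu^{\vee}$ via $\langle e_i, \mu^{\vee} \rangle = a_i - \frac{1}{n + 1} \sum_l a_l$ (the $k = 1$ instance of the previous pairing, equivalently Lemma~\ref{lemma:SL-lattice description}) and using $\sum_{i = 0}^n (i - \sigma^{-1}(i)) = 0$ to kill the rational contribution, one obtains $\langle \sigma \rho - \rho, \mu^{\vee} \rangle = \sum_{i = 0}^n (i - \sigma^{-1}(i)) a_i = f(\sigma)$.

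Finally I would substitute these into $\mathcal{P}(Y) = \sum_{\sigma \in \mathcal{W}} (-1)^{\sigma} t^{\langle \sigma \rho - \rho, \mu^{\vee} \rangle} / (1 - t^{\langle \sigma \lambda, \mu^{\vee} \rangle + d})$ and partition $\mathcal{W} \simeq S_{n + 1}$ into the fibres $S_I = \{\sigma : \sigma(\{0, \ldots, k - 1\}) = I\}$ indexed by the $k$-subsets $I$. On each such fibre the denominator is the constant $1 - t^{a(I)}$, so summing first over $\sigma \in S_I$ produces $F(I) = \sum_{\sigma \in S_I} (-1)^{\sigma} t^{f(\sigma)}$ and then over $I$ gives the stated expression for $\mathcal{P}(Y)$.

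I do not expect a serious obstacle: the whole argument is an unwinding of Proposition~\ref{proposition:hilbert-series-weyl} through the parametrisation of Proposition~\ref{proposition:SL-grading}. The only points requiring care are the bookkeeping of the $\frac{1}{n + 1}$-terms (the passage between the $\GL(V)$- and $\SL(V)$-pictures) and the distinction between $\sigma$ and $\sigma^{-1}$ in the Weyl action on weights versus on the index set $\{0, \ldots, n\}$ --- both already settled by Lemma~\ref{lemma:weyl-orbit} and the pairing computation reused from the proof of Corollary~\ref{corollary:SL-grading}.
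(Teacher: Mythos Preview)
Your proposal is correct and follows essentially the same route as the paper: specialise Proposition~\ref{proposition:hilbert-series-weyl}, compute the two pairings, and then group the alternating sum over $\mathcal{W} \simeq S_{n+1}$ by the fibres $S_I$. The only cosmetic difference is in the computation of $f(\sigma)$: the paper expands the Weyl vector in the simple-root basis $\rho = \sum_l c_l \alpha_l$ and pairs with $\sigma^{-1}(\mu^{\vee})$ written in the $\omega_j^{\vee}$-basis, whereas you expand $\rho$ in the $e_i$-basis and use the permutation action directly together with $\sum_i (i - \sigma^{-1}(i)) = 0$ to cancel the $\frac{1}{n+1}$-term; both computations land on the same formula.
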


\begin{proof}
  We can rewrite \(f(\sigma)\) as follows:
  \[
    f(\sigma) = \langle \sigma \rho, \mu^{\vee} \rangle - \langle \rho, \mu^{\vee} \rangle =
    \langle \rho, \sigma^{-1}(\mu^{\vee}) \rangle - \langle \rho, \mu^{\vee} \rangle.
  \]
  
  We present the Weyl vector as a (rational) linear combination of simple roots:
  \[
    \rho = \sum_{l = 1}^n \omega_l = \sum_{l = 1}^n c_l \alpha_l, \quad
    c_l = \sum_{j = 1}^n \min(l, j) - \frac{l n}{2}.
  \]
  Note that \(c_0 = c_{n + 1} = 0\), and \(c_{l + 1} - c_l = (n - l) - n/2\).

  Let us present the action of the Weyl group on the coweight \(\mu^{\vee}\) as follows:
  \[  
    \sigma^{-1}(\mu^{\vee}) =
    \sigma^{-1} \left ( \sum_{i = 0}^n a_i e_i^{\vee} \right ) =
    \sum_{i = 0}^n a_i e_{\sigma^{-1}(i)}^{\vee} =
    \sum_{i = 0}^n a_{\sigma(i)} e^{\vee}_i = \sum_{j = 1}^n (a_{\sigma(j - 1)} - a_{\sigma(j)}) \omega_{j}^{\vee}.
  \]
  Consequently, we obtain from Proposition~\ref{proposition:SL-grading} that
  \begin{gather*}
    f(\sigma) =
    \sum_{j = 1}^n c_j (a_{\sigma(j - 1)} - a_{\sigma(j)}) -
    \sum_{j = 1}^{n - 1} c_j (a_{j - 1} - a_j) = \\
    \sum_{i = 0}^n (c_{\sigma^{-1}(i) + 1} - c_{\sigma^{-1}(i)}) a_i
    - \sum_{i = 0}^n (c_{i + 1} - c_i) a_i =
    \sum_{i = 0}^n (i - \sigma^{-1}(i)) a_i. \qedhere
  \end{gather*}
\end{proof}

\begin{proof}[Proof of Proposition~\ref{proposition:hilbert-series}]
  Apply Proposition~\ref{proposition:hilbert-series-weyl} to the above-computed terms.
\end{proof}

\clearpage
\printbibliography

\end{document}